\title[Applications of rational difference equations to spectral graph theory]{Applications of rational difference equations to spectral graph theory: expanded version}
\author[E. R. Oliveira]{Elismar R. Oliveira}
\address{UFRGS - Universidade Federal do Rio Grande do Sul, Instituto de Matem\'atica e Estat\'stica, Porto Alegre, Brazil}\email{\tt elismar.oliveira@ufrgs.br}
\author[V. Trevisan]{Vilmar Trevisan}
\address{UFRGS - Universidade Federal do Rio Grande do Sul, Instituto de Matem\'atica e Estat\'stica, Porto Alegre, Brazil and \\
Department of Mathematics and Applications, University of Naples Federico II, Italy}\email{\tt trevisan@mat.ufrgs.br}
\newtheorem{obs}{Remark}
\newtheorem{exemplo}{Example}
\newtheorem{thm}{Theorem}
\newtheorem{defin}{Definition}
\newtheorem{prop}{Proposition}
\newtheorem{lema}{Lemma}
\newtheorem{coro}{Corollary}
\newcommand{\ero}[1]{\textcolor{blue}{#1}}
\begin{document}
\begin{abstract}

We study a general class of recurrence relations that appear in the application of a matrix diagonalization procedure. We find a general closed formula and determine the analytical properties of the solutions. We finally apply these findings in several problems involving eigenvalues of graphs.

\noindent {\bf Keywords: recurrence relation; periodic solutions, eigenvalue location; eigenvalues of graphs}
\\[0.15cm]

\noindent {\bf 2010 Mathematics Subject Classification:
05C50, 
05C63,  
05C85,  
05C05,  
11B37,  
65Q30,  
39A23. 
}
\end{abstract}

\maketitle

\section{Introduction}

\ero{The expanded version presents some additional computations that were not suitable for the published version and small typo corrections. Those are highlighted in blue for sake of comparison.}

The main goal of this paper is to consider a general class of rational type recurrences appearing in graph applications, mainly in eigenvalue location, as a unified elementary form,
\begin{equation}\label{formphi}
  x_{j+1}= \varphi(x_{j}), j \geq 1
\end{equation}
where $\varphi(t)= \alpha + \frac{\gamma}{t}$, for $t\neq 0$,   $\alpha, \gamma \in \mathbb{R}$ are fixed numbers ($\gamma \neq 0$) and $x_{1}$ is a given initial condition. The explicit solution is a function $j \to f(j)$ such $x_{j}= f(j)$ for $j \geq 1$.

Perhaps this study is interesting \emph{per se}, but we explain now the motivation behind these recurrences that may be used for diagonalizing certain symmetric matrices and their important applications in graph theory.

For a real $ n \times n$  symmetric matrix $M=[m_{i,j}]$, we consider the \emph{graph of $M$}, as the graph with $n$ vertices $1,2,\ldots,n$ where an edge between $i$ and $j$ exists if and only if $m_{i,j} \not = 0$. We may see  $m_{i,j}$ as the weight of the edge $ij$, whereas $m_{i,i}$, the diagonal element, as the weight of the vertex $i$. Diagonalizing the matrix $M$ has important applications in numerical linear algebra in general. Our focus here are applications we will describe later on in spectral graph theory. In particular, if the graph of $M$ is a tree $T$, the diagonalization algorithm of Jacobs-Trevisan~\cite{JT2011} (J-T algorithm, for short) provides an efficient and ingenious procedure that can be executed on the tree itself. We provide the J-T algorithm in Figure~\ref{treealgo} for easy reference.

If we start the algorithm with a value $\alpha$, the algorithm computes a diagonal matrix $D_\alpha$ that is congruent to $M-\alpha I$, where the diagonal values are stored as the weight of the vertices. By the Silvester Law of Inertia, the inertia of $D_\alpha$ and $M-\alpha I$ are the same. This tell us that the number of (positive/negative/zero) final values of the vertices indicates the number of eigenvalue of $M$ that are (greater than/smaller than/equal to)  $\alpha$. We call this an \emph{eigenvalue location algorithm}.

The J-T algorithm works as follows. For a given $\alpha \in \mathbb{R}$, we initialize the weights of the vertices as $a(v_i) = m_{i,i} - \alpha$. For a child  $v_{j}$ of $v_{i}$, if the value of the diagonal entry is $a(v_{j}) \not = 0$ and the correspondent value in the positions $(i,j)$ and $(j,i)$ is $w$ then we can annihilate the nondiagonal elements of row and column $j$ using  $a(v_{j})$ while the value of the diagonal entry $a(v_{i})$ is replaced by
 $$a(v_{i})=\alpha_{i} - \frac{w^2}{a(v_{j})} $$ where $\alpha_{i}= m_{i,i}$ is the diagonal value in the position $(i,i)$. When the value $a(v_{j}) = 0$, the algorithm does some other procedure (see Figure~\ref{treealgo}).

\begin{figure}[h]
\centering
{\tt\begin{tabular}{p{14cm}}
\hline
Input: scalar $\alpha$; matrix $M$ of a tree $T$ with vertices $v_1, v_2,\ldots, v_n$, in postorder \\
Output: \, diagonal matrix $\Lambda$ congruent t $M + \alpha I$  \\
\hline
\vspace*{.1cm}
\noindent \textbf{Inicialize} $a(v_{i}) := m_{i,i}  - \alpha$, for all $v_i$ of $T$. \\
\textbf{for} $k=n$ \textbf{to} $1$  \textbf{do} \\
$\> \> \> \>  $  \textbf{if} $v_k$ is not a leaf, \textbf{then} \\
$\> \> \> \> \> \> \> \> \> \> \> $ \textbf{1.~ if }$a(v_i) \neq 0$ for all children $v_i$ of  $v_k$, \textbf{then} \\
$\> \> \> \> \> \> \> \> \> \> \> \> \> \> \> \> \> \> \> \> \> $  $a(v_k) \leftarrow a(v_k) - \displaystyle\sum_{v_{i}} {\frac{(m_{i k} )^2}{a(v_i) }}.$ \\
$\> \> \> \> \> \> \> \> \> \> \> $ \textbf{2.~ if } $a(v_i) =0$ for some children $v_i$ of $v_k$, \textbf{then }\\
$\> \> \> \> \> \> \> \> \> \> \> \> \> \> \> \> \> \> \> \>\> $ choose a vertex $v_j$ that $a(v_j) = 0;$ \\
$\> \> \> \> \> \> \> \> \> \> \> \> \> \> \>\> \> \> \> \>\> $ $a(v_k) \leftarrow - \displaystyle\frac{(m_{j k})^{2}}{2}; \,\,\,\, a(v_j) \leftarrow 2;$ \\
$\> \> \> \> \> \> \> \> \> \> \> \> \> \> \> \> \> \> \> \>\> $  \textbf{if} $v_k$ has a parent $v_{\ell}$, \textbf{then} remove the edge $\{v_k, v_{\ell}\}$. \\
\vspace*{.1cm}\\
\hline
\end{tabular}
}
\vspace{0.2cm}
\caption{The J-T algorithm, $\textit{Diagonalize}(T, \alpha)$.}\label{treealgo}
\end{figure}

The information provided by the signs of the final values turns out to be of paramount importance for several applications in spectral graph theory. To be more concrete, and understand why the sequences appear, we consider three cases that will be prototypical to our studies.

\begin{itemize}
  \item Index or spectral radius of a tree $T$. This is the largest eigenvalue of the adjacency matrix of $T$, let us denote it by $\lambda$. We consider applying the J-T algorithm in a pendant path, $M$ is the adjacency matrix of $T$ and we initialize the diagonal with $0-\lambda$ ($M_{i,i}=0$ and $\alpha=\lambda$). The nonzero entries are equal to 1. Starting in the end vertex of the path one obtains $z_1=-\lambda$, $z_2=-\lambda - \frac{1^2}{z_1}=-\lambda - \frac{1}{z_1}$ and so on. Therefore, we have a recursion
      \begin{equation}\label{recurs index}
        \left\{
        \begin{array}{ll}
          z_1=-\lambda \\
          z_{j+1}=-\lambda - \frac{1}{z_{j}}, \; j \geq 1.
        \end{array}
      \right.
      \end{equation}
      A careful analysis of this recurrence relation (with different initial conditions), has been used to compare indices in classes of trees, enabling one to solve combinatorial/algebraic problems where traditional techniques had failed (see, for example \cite{Belardo2019,oliveira2020spectral,OlivStevTrev}).
  \item Average of the Laplacian eigenvalues of a tree $T$. We initialize the diagonal with ${\rm degree}(v_i)-d$ where $d=2-\frac{2}{n}$ ($M_{i,i}={\rm degree}(v_i)$ and $\alpha=d$ is the average of the Laplacian eigenvalues). The correspondent entries are equal to -1. Starting in the end vertex of a path one obtains $a_1=1-d=-1+\frac{2}{n}$, $a_2=2-d - \frac{(-1)^2}{a_1}=\frac{2}{n} - \frac{1}{a_1}$ and so on. Therefore, we have a recursion
      \begin{equation}\label{recurs Laplacian average}
        \left\{
        \begin{array}{ll}
          a_1=-1+\frac{2}{n} \\
          a_{j+1}=\frac{2}{n} - \frac{1}{a_{j}}, \; j \geq 1.
        \end{array}
      \right.
      \end{equation}
      This recurrence relation, used in an ingenious way, was the main tool to prove that at least half of the Laplacian eigenvalues of a tree are smaller than its average ~\cite{Jacobs2021}.
  \item  An eigenvalue $\lambda \in [0,\;2]$ of the normalized Laplacian of a tree $T$. We initialize the diagonal with $1-\lambda$ ($M_{i,i}=1$ and $\alpha=\lambda$). The correspondent entries are equal to $\frac{-1}{\sqrt{deg(v_i)deg(v_j)}}$ which is always equal to $\frac{-1}{\sqrt{2}}$ in the end of a path and $\frac{-1}{\sqrt{2\cdot 2}}=-\frac{1}{2}$ for the next vertices. Starting in the end vertex of a path one obtain $x_0=1-\lambda$, $x_1=1-\lambda- \frac{(\frac{-1}{\sqrt{2}})^2}{x_0}=1-\lambda- \frac{1/2}{x_0}$, $x_2=1-\lambda- \frac{(\frac{-1}{2})^2}{x_1}=1-\lambda- \frac{1/4}{x_1}$, $x_3=1-\lambda- \frac{(\frac{-1}{2})^2}{x_2}=1-\lambda- \frac{1/4}{x_2}$ and so on. Therefore, we have a recursion
      \begin{equation}\label{recurs norm Laplacian average}
        \left\{
        \begin{array}{ll}
          x_0=1-\lambda \\
          x_1=1-\lambda- \frac{1}{2(1-\lambda)}\\
          x_{j+1}=1-\lambda - \frac{1/4}{x_{j}}, \; j \geq 1.
        \end{array}
      \right.
      \end{equation}
\end{itemize}

As these examples show, applying the J-T algorithm on a path of a tree, produces certain numerical rational sequences. They are all of the general form given by equation \eqref{formphi}.

The main purpose of this paper is to understand the analytical behaviour of these sequences $x_j$ when $j$ is seen as a continuous variable, which allows one to obtain information about the sign on the discrete values of $j$. The passage from natural numbers, representing the vertices of a graph, to arbitrary real numbers allows one  to find solutions of analytical equations which will define intervals of indices where a certain property is true. By  applying the general results to particular cases, we also demonstrate the potential of this technique in applications to spectral graph theory. For example, we determine limit points of spectral radius of certain trees and study and analyse the dependence of the solutions and the initial conditions on a given parameter, applying to study the number of eigenvalues smaller than its average.

In some cases, the applications we made require to study the dependence of the solution with respect to a given parameter $a$ that may appear both in the initial condition and/or in the recurrence formula.

Another aspect that we investigate and use in applications, is the extension properties. Here we mean to extend the integer variable $j$ to values in $\mathbb{R}$ and to use the structure of the resulting function to derive properties of the original sequence.

In several cases, the dynamical behaviour of the function $\varphi(t)= \alpha + \frac{\gamma}{t}$ brings us a lot of information about the correspondent sequence. The knowledge of these properties allows one to study some important problems in spectral graph theory regarding eigenvalue location.

The paper is organized as follows.
Next section presents a reduction method allowing us to transform the general recurrence given by \eqref{formphi} into a second order linear recurrence, which in turn, leads to a closed formula for the solution. In Section \ref{sec:prop}, we present general properties of solutions given in Section \ref{sec:red}. The next two sections present new applications in spectral graph theory. Section \ref{sec:limit} deals with limit points of spectral radius of graphs and uses the solutions to prove two known results. The rational is to show the potential use of these techniques in this area. Section \ref{sec:param} studies the dependence of the solutions and the initial conditions on a given parameter. We also use a particular example - study the number of eigenvalues smaller than its average - to show how an analytical approach may be used to obtain very precise results. We finalize the paper with some concluding remarks and open problems.

\section{The reduction method}\label{sec:red}

Before reducing equation $x_{j+1}= \varphi(x_{j})$, where   $\varphi(t)= \alpha + \frac{\gamma}{t},\; \alpha, \gamma \in \mathbb{R}$, into a second-order linear recursion, we will discuss briefly two general properties. \ero{We recall that a solution is a necessarily infinite sequence of values $(x_{j})_{j\in \mathbb{N}}$. Since we are dealing with a rational recurrence we must require that the initial condition $x_1$ be such that $x_{j}\neq 0$ for all $j \geq 1$. Thus, when we are talking about solutions of the rational recurrence we are implicitly assuming that the initial condition satisfy $x_1 \not\in \{0\} \cup \mathcal{Z}$, where
\[\mathcal{Z}:=\{y \neq 0\, | \,   \varphi^k(y) \neq 0 , \forall k \geq 1\} = \bigcup_{k\geq 1} \varphi^{-k}(0).\]
\begin{defin}
	The set $\mathcal{Z}$ is called the null set for the rational recurrence $x_{j+1}= \varphi(x_{j})$.
\end{defin}}

\ero{We notice that it is possible to explore the  symmetry of the formula $\varphi(t)= \alpha + \frac{\gamma}{t}$
	\[\varphi(-t)= \alpha + \frac{\gamma}{-t}= -(-\alpha) - \frac{\gamma}{t}= -\left(-\alpha + \frac{\gamma}{t} \right),\]
to reduce the number of dynamically distinct cases.  This means that despite the fact that the recurrence are different  its solutions can be recovered by the symmetry around the origin.
\begin{lema}\label{lem:symmetry}
	 The sequence $(x_j)_{j \in \mathbb{N}}$ is a solution of  $x_1=a$ and  $x_{j+1}=\varphi(x_j) $, where   $\varphi(t)=\alpha +\frac{\gamma}{t}, \; t\neq 0$, if and only if,   $(y_j)_{j \in \mathbb{N}}$ is a solution of $y_1=-a$ and  $y_{j+1}=\psi(y_j) $, where   $\psi(t)=-\alpha +\frac{\gamma}{t}, \; t\neq 0$.
\end{lema}
\begin{proof}
Indeed,
\[x_{j+1}=\alpha +\frac{\gamma}{x_{j}} \iff -x_{j+1}=-\alpha -\frac{\gamma}{x_{j}}  \iff -x_{j+1}=-\alpha +\frac{\gamma}{-x_{j}}, \]
so if we take $y_{j}=-x_{j}$ we obtain the equivalence because $y_{1}=-x_{1}=-a$.
\end{proof}}

\subsection{Zeroes of $\varphi$}
The first problem we face is to find the zeroes of $\varphi$. If  $x_{j}=0$ for some $j$, then we can not compute $x_{j+1}$. \ero{We observe that in the applications to the J-T algorithm to locate a real value $\lambda$(for example $\alpha=-\lambda$, for the adjacency matrix), this means that the initial value $\lambda$ could belong to the spectrum of the associated matrix depending on the number of zeroes we find when processing the children of a vertex to be more than one or, if the graph is just a path and we are processing the root vertex. Although,  in any case there is a procedure to choose the next term in the algorithm, which is not  by computing $x_{j+1}=\varphi(x_{j})$, see Figure~\ref{treealgo} for the exact attribution.} Therefore, the infinite orbit $(x_{j})_{j\geq 1}$ are the complement of the pre-images of  zero by $\varphi$. Hence, $(x_j)$ is finite if, and only if, $x_1= \varphi^{-n}(0)$ for some $n \in \mathbb{N}$. We recall that $\psi(t)= \frac{\gamma}{t - \alpha}$, for $t\neq \alpha$, is the inverse of $\varphi(t)= \alpha + \frac{\gamma}{t}$. Therefore the initial conditions with finite orbit are the points $\{\psi^{n}(0)\; |\; n\geq 1\}$.

Evaluating $\psi(t)$ we obtain \ero{the null set}
$$ \ero{\mathcal{Z}=}\left\{\psi(0)=-\frac{\gamma}{\alpha },\; \psi^{2}(0)=-\frac{\gamma}{\frac{\gamma}{\alpha }+\alpha }, \; \ldots\right\}.$$

\begin{exemplo}\label{ex:zeros} Considering $\alpha=2$ and $\gamma=-1$, we have $\varphi(t)= 2 - \frac{1}{t}$ and $\psi(t)= \frac{1}{2- t}$. It is easy to see that the sequence $\left\{\psi(0),\; \psi^{2}(0), \; ...\right\}$ is the sequence $\ero{\mathcal{Z}=(\frac{n-1}{n})_{n \geq 2}}$.  Thus, if we take $x_1=3/4$ (that is, $n=4$) we get $x_2=2/3$, $x_3=1/2$ and $x_4=0$, so we can not compute $x_5$. This is a powerful information: as $\frac{n-1}{n} \to 1$ increasingly, for any initial condition $x_1$ outside of a neighborhood of $1$ we can iterate $\varphi$ to any order, except for a finite set of numbers of the form $x_1=\frac{n-1}{n}, n\leq n_0$.
\end{exemplo}

\subsection{Fixed points of $\varphi$}
Regarding the infinite orbits $(x_{j})_{j\geq 1}$ the relevant question are about the accumulation points, that is, the asymptotic behaviour of $\varphi$. As $\varphi$ is piecewise monotonous, we notice that such a limit, if it exists, must be some fixed point of $\varphi$.

Evaluating $\varphi(t)= \alpha + \frac{\gamma}{t}=t$ we see that the only possible zeroes are the solutions of
\begin{equation}\label{fixed points phi}
  t^2 -\alpha t -\gamma  =0.
\end{equation}
which is the characteristic equation of the auxiliary recursion $c_{j}$ (see \eqref{reduced recursion} below).

Therefore the solution is
\begin{equation}\label{eq: fixed point varphi t}
  t= \frac{\alpha}{2} \pm \frac{1}{2}\sqrt{\alpha^2 + 4 \gamma}.
\end{equation}

In the analysis of the equivalent linear second order recurrence we make below, where the characteristic polynomial appears, the fixed point are important.

\subsection{Reduction}
Dealing with the non-linear recursion \eqref{formphi} one can apply a reduction method to transform $x_{j+1}= \varphi(x_{j})$ into a linear equation. Given the nature of the rational functions, we propose to consider an auxiliary sequence $c_{j} \neq 0$\ero{(we can do that because $0\neq x_1 \not\in  \mathcal{Z}$)} and try to find $x_{j}$ in the form
\begin{equation}\label{reduction seq}
  x_{j}=\frac{c_{j+1}}{c_{j}}, \; j \geq 1.
\end{equation}

Substituting \eqref{reduction seq} in \eqref{formphi} we obtain
\begin{equation}\label{reduced recursion}
  c_{j+2} -\alpha c_{j+1} -\gamma c_{j} =0.
\end{equation}
\color{black}
\subsection{Analysis of a second-order linear recursion}
For basic results on difference equations we refer the book \cite{elaydi2005introduction}, but we recall here some facts about a second-order linear recursion such as \eqref{reduced recursion},
\begin{equation}\label{general second-order linear recursion}
   c_{j+2} + U c_{j+1} + V c_{j} =0
\end{equation}
where $U,V \in \mathbb{R}$ are fixed numbers. Additionally, we assume that \eqref{general second-order linear recursion} is irreducible, that is, $V\neq0$. We assign to that the characteristic equation $\theta^2 + U \theta + V  =0$ which has three possible solutions:
\begin{itemize}
  \item[a)] Only one real root $\theta$. In this case, the solution of (general second-order linear recursion) is
  \begin{equation}\label{sol one root}
    c_{j}=(A +j B) \theta^{j}
  \end{equation}
  \item[b)] Two real roots $\theta, \theta'$. In this case, the solution of (general second-order linear recursion) is
  \begin{equation}\label{sol two roots}
    c_{j}=A \theta^{j} + B (\theta')^{j}
  \end{equation}
  \item[c)] Two conjugated complex roots $\rho \; e^{\pm i\phi}$. In this case the solution of (general second-order linear recursion) is
  \begin{equation}\label{sol two complex roots}
    c_{j}=\rho^{j} (A \cos(j\phi) + B \sin(j\phi) )
  \end{equation}
\end{itemize}

Denoting $\Delta=\alpha^2 + 4 \gamma$ we can find an explicit formula in each case.\\

Case 1: $\Delta=0$\\
In this case the only root is $\theta= \frac{\alpha}{2}$. Using \eqref{sol one root} we obtain $c_{j}=(A +j B) \theta^{j}$. If $B=0$ then $c_{j}=A \theta^{j}$ and $x_{j}=\frac{c_{j+1}}{c_{j}}=\theta$ is the trivial solution, which is obvious because  $\varphi(\theta)=\theta$. Thus we can suppose $B\neq 0$ and obtain
$$x_{j}=\frac{c_{j+1}}{c_{j}}= \frac{(A +(j+1) B) \theta^{j+1}}{(A +j B) \theta^{j}}=\theta \frac{(A +j B) +B}{(A +j B)}= \theta\left(1+ \frac{1}{(A/B +j )}\right).$$
As $B\neq 0$ we obtain the formula for non trivial solutions
\begin{equation}\label{explicity one root case}
   x_{j}=\theta\, \left(1+ \frac{1}{(\beta +j )}\right)
\end{equation}
where $\beta:=\left(\frac{2\theta-x_1}{x_1-\theta}\right) \in \mathbb{R}$ is defined by the initial point $x_1$.\\

Case 2: $\Delta>0$\\ In this case the two real roots are $\theta= \frac{\alpha}{2} - \frac{1}{2}\sqrt{\alpha^2 + 4 \gamma}$ and $\theta'= \frac{\alpha}{2}+ \frac{1}{2}\sqrt{\alpha^2 + 4 \gamma}$ \ero{(we will always use the negative sign for $\theta$ regardless  $\theta< \theta'$ or  $\theta>\theta'$). Notice that $\theta+\theta'=\alpha$ and $\theta \,\theta'=-\gamma$.} Using \eqref{sol two roots} we obtain
$c_{j}=A \theta^{j} + B (\theta')^{j}$ and $$x_{j}=\frac{c_{j+1}}{c_{j}}= \frac{A \theta^{j+1} + B (\theta')^{j+1}}{A \theta^{j} + B (\theta')^{j}}.$$
As $\gamma \neq 0$ we know that $\theta \neq 0$ and $\theta' \neq 0$. Also, $A$ and $B$ can not be simultaneously zero. If $B=0$ then $c_{j}=A \theta^{j}$ and $x_{j}=\frac{c_{j+1}}{c_{j}}=\theta$ is the trivial solution, which is obvious because  $\varphi(\theta)=\theta$. Thus we can suppose $B\neq 0$ and divide the formula by  $B (\theta')^{j}$ obtaining
$$x_{j}=\frac{A/B \;\theta \;\left(\frac{\theta}{\theta'}\right)^{j} + \theta'}{A/B \left(\frac{\theta}{\theta'}\right)^{j} + 1}=\frac{\theta \beta \left(\frac{\theta}{\theta'}\right)^{j} + \theta'}{\beta \left(\frac{\theta}{\theta'}\right)^{j} + 1}=\frac{\theta \left(\beta \left(\frac{\theta}{\theta'}\right)^{j}+1 -1 \right) + \theta'}{\beta \left(\frac{\theta}{\theta'}\right)^{j} + 1}.$$
Therefore the explicit equation  for other than the trivial solution $x_{j}=\theta$ is
\begin{equation}\label{explicity two roots case}
   x_{j}=\theta + \frac{\theta'-\theta}{\beta \left(\frac{\theta}{\theta'}\right)^j + 1}
\end{equation}
where  \ero{$\beta:=\left(\frac{\theta'}{\theta}\right)\left(\frac{\theta'-\theta}{x_1-\theta} -1\right) \in \mathbb{R}$ }is well defined by the initial point $x_1\neq \theta$. Notice that the other trivial solution $x_{j}=\theta'$ is obtained from $\beta=0$ (or equivalently $A=0$).\\

Case 3: $\Delta<0$\\ Since $\alpha^2 + 4 \gamma<0$  we obtain $\gamma < -\frac{1}{4}\alpha^2 \leq 0$ thus $\varphi(\theta)=\theta$ has no solution. In the case $\alpha \neq 0$ we have two complex roots are $Z= \frac{\alpha}{2} + \frac{i}{2}\sqrt{-\alpha^2 - 4 \gamma}$ and $\bar{Z}=\frac{\alpha}{2}- \frac{i}{2}\sqrt{-\alpha^2 - 4 \gamma}$. Using \eqref{sol two complex roots} we obtain
$ c_{j}=\rho^{j} (A \cos(j\phi) + B \sin(j\phi) )$ where
$$\rho=\sqrt{\left(\frac{\alpha}{2}\right)^2+\left(\frac{1}{2}\sqrt{-\alpha^2 - 4 \gamma}\right)^2}=\sqrt{-\gamma},$$
$$\phi=\arctan\left(\frac{\sqrt{-\alpha^2 - 4 \gamma}}{\alpha}\right) \text{ if } \alpha>0,$$
$$\phi=\arctan\left(\frac{\sqrt{-\alpha^2 - 4 \gamma}}{\alpha}\right)+\pi \text{ if } \alpha<0$$
considering the branch $\left(-\frac{\pi}{2}, \frac{\pi}{2}\right)$ of the function $\tan$, and
$$x_{j}=\frac{c_{j+1}}{c_{j}}= \frac{\rho^{j+1} (A \cos((j+1)\phi) + B \sin((j+1)\phi) )}{\rho^{j} (A \cos(j\phi) + B \sin(j\phi) )}=$$
$$ = \rho\,\frac{\frac{A}{\sqrt{A^2+B^2}} \cos((j+1)\phi) + \frac{B}{\sqrt{A^2+B^2}}  \sin((j+1)\phi) }{\frac{A}{\sqrt{A^2+B^2}}  \cos(j\phi) + \frac{B}{\sqrt{A^2+B^2}}  \sin(j\phi) }.$$
We denote $\omega \in [0,\,2\pi)$ the angle such that $\left(\frac{A}{\sqrt{A^2+B^2}}, \; -\frac{B}{\sqrt{A^2+B^2}}\right)=(\cos(\omega), \; \sin(\omega))$. Using the addition formula we get
$$\frac{A}{\sqrt{A^2+B^2}} \cos((j+1)\phi) + \frac{B}{\sqrt{A^2+B^2}}  \sin((j+1)\phi)= \cos((j+1)\phi +\omega)$$
and
$$\frac{A}{\sqrt{A^2+B^2}}  \cos(j\phi) + \frac{B}{\sqrt{A^2+B^2}}  \sin(j\phi) =\cos(j\phi +\omega)$$
thus
$$x_{j}=\rho\,\frac{\cos((j+1)\phi +\omega) }{\cos(j\phi +\omega) }=\rho\,\frac{\cos((j\phi+\omega) +\phi) }{\cos(j\phi +\omega)}=$$
$$=\rho\,\frac{\cos(j\phi+\omega )\cos(\phi)-\sin(\phi)\sin(j\phi+\omega) }{\cos(j\phi +\omega)}.$$
Therefore the explicit solution for $\alpha\neq 0$ is
\begin{equation}\label{explicity two complex roots case}
   x_{j}=\rho\, \left(\cos(\phi)  - \sin(\phi) \tan(j\phi +\omega) \right)
\end{equation}
where  $\omega:=-\phi+\arctan\left(\frac{\cos(\phi) - (x_1/\rho)}{\sin(\phi)}\right) \in [0,\,2\pi)$ is defined by the initial point $x_1$. The case $\alpha=0$ does not appear in the graph applications and it is, in a certain way, trivial because $c_{j+2} -\alpha c_{j+1} -\gamma c_{j} =0$ became $c_{j+2} =\gamma c_{j}$, a uncoupled equation. Taking $c_1=1$ and $c_2=x_1$ ($x_{1}=\frac{c_{2}}{c_{1}}$) we get $c_{2k} =\gamma^{k-1} c_{2}= \gamma^{k-1}  x_1$ and $c_{2k+1} =\gamma^{k} c_{1}= \gamma^{k}$ then
$x_{j}=\frac{c_{j+1}}{c_{j}}=
\left\{
  \begin{array}{ll}
    \frac{\gamma}{x_1}, & j=2k \\
    x_1, & j=2k+1
  \end{array}
\right.
, \; j \geq 1$. This is obviously the solution of $x_{j+1}=\frac{\gamma}{x_{j}}$ for a given $x_1 \neq 0$.\\

Summarizing, we have proved the theorem below, which is well known in a general setting but is useful to have the explicit computations of the general formulas, given a prescribed initial condition, as we will see on the applications.
\begin{thm} \label{sol general rational difference equation}
Consider the rational first order difference equation $x_{j+1}=
\varphi(x_{j}), j \geq 1$, where $\varphi(t)= \alpha + \frac{\gamma}{t}$, for
$t\neq 0$,   $\alpha, \gamma \in \mathbb{R}$ are fixed numbers ($\gamma \neq
0$) and \ero{$0\neq x_1 \not\in  \mathcal{Z}$} is a given initial condition. Then the general solution is
one of three possibilities according the sign of $\Delta=\alpha^2 + 4
\gamma$:
\begin{itemize}
  \item[Type 1:] For $\Delta=0$ the solution is
$$ x_{j}=\theta\, \left(1+ \frac{1}{(\beta +j )}\right),$$
where $\theta= \frac{\alpha}{2}$ and $\beta \in \mathbb{R}$ is defined by the initial point $x_1 \neq \theta$ by the formula $\beta=-1+\frac{\theta}{x_1-\theta}$. If $x_1 =\theta$ then $x_j=\theta, \, \forall j \geq 1$ is the solution.
  \item[Type 2:] For $\Delta>0$ the solution is
$$x_{j}=\theta + \frac{\theta'-\theta}{\beta \left(\frac{\theta}{\theta'}\right)^j + 1},$$
where \ero{$\theta= \frac{\alpha}{2} - \frac{1}{2}\sqrt{\alpha^2 + 4 \gamma}$,  $\theta'= \frac{\alpha}{2}+ \frac{1}{2}\sqrt{\alpha^2 + 4 \gamma}$ } and $\beta \in \mathbb{R}$ is defined by the initial point $x_1 \neq \theta$ by the formula $\beta=\frac{\theta'}{\theta}\,\left(\frac{\theta'-\theta}{x_1-\theta}-1\right)$. If $x_1 =\theta$ then $x_j=\theta, \, \forall j \geq 1$ is the solution.
  \item[Type 3:] For $\Delta<0$ the solution is
$$x_{j}= \rho\, \left(\cos(\phi)  - \sin(\phi) \tan(j\phi +\omega) \right),$$
where $\rho=\sqrt{-\gamma},$ $\phi=\arctan\left(\frac{\sqrt{-\alpha^2 - 4 \gamma}}{\alpha}\right) \text{ if } \alpha>0$,
$\phi=\arctan\left(\frac{\sqrt{-\alpha^2 - 4 \gamma}}{\alpha}\right)+\pi \text{ if } \alpha<0$ and  $\omega \in [0,\,2\pi)$ is defined by the initial point $x_1$ by the formula $\omega= -\phi +\arctan\left(\cot(\phi)-\frac{x_1}{\rho}\csc(\phi)\right)$.
If $\alpha=0$  then
$x_{j}=
\left\{
  \begin{array}{ll}
    \frac{\gamma}{x_1}, & j=2k \\
    x_1, & j=2k+1
  \end{array}
\right.
,$ $ j \geq 1$ is the solution of $x_{j+1}=\frac{\gamma}{x_{j}}$ for a given $x_1 \neq 0$.
\end{itemize}
\end{thm}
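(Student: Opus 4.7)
My plan is to build the proof on the reduction $x_{j}=c_{j+1}/c_{j}$ already introduced, which converts the nonlinear recurrence $x_{j+1}=\alpha+\gamma/x_{j}$ into the second-order linear homogeneous recursion $c_{j+2}-\alpha c_{j+1}-\gamma c_{j}=0$. Its characteristic equation is $\theta^{2}-\alpha\theta-\gamma=0$, whose discriminant is exactly $\Delta=\alpha^{2}+4\gamma$. The sign of $\Delta$ therefore governs a clean trichotomy (double real root, two distinct real roots, pair of complex conjugate roots), matching the three types in the statement one-to-one. For each type, the classical closed form for $c_{j}$ (namely $(A+jB)\theta^{j}$, $A\theta^{j}+B(\theta')^{j}$, and $\rho^{j}(A\cos(j\phi)+B\sin(j\phi))$ respectively) may be freely invoked from \cite{elaydi2005introduction}.

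Next I would plug each expression for $c_{j}$ into the ratio $c_{j+1}/c_{j}$ and simplify. A key observation is that the ratio is invariant under rescaling $c_{j}\mapsto\lambda c_{j}$, so after simplification only one free parameter survives; I would label it $\beta$ in Types 1 and 2 (a ratio of the two integration constants) and $\omega$ in Type 3 (a phase obtained by writing $(A,-B)/\sqrt{A^{2}+B^{2}}=(\cos\omega,\sin\omega)$ and applying the addition formula). In Type 1 this directly yields $x_{j}=\theta(1+1/(\beta+j))$. In Type 2, dividing numerator and denominator by $B(\theta')^{j}$ and adding/subtracting $\theta$ cleanly produces $x_{j}=\theta+(\theta'-\theta)/(\beta(\theta/\theta')^{j}+1)$. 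In Type 3, writing $\cos((j+1)\phi+\omega)=\cos(j\phi+\omega)\cos\phi-\sin(j\phi+\omega)\sin\phi$ and dividing by $\cos(j\phi+\omega)$ gives the tangent form $x_{j}=\rho(\cos\phi-\sin\phi\tan(j\phi+\omega))$. The degenerate choices of the integration constants correspond to the fixed-point trajectories $x_{j}\equiv\theta$ or $x_{j}\equiv\theta'$, which must be called out separately.

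Finally, I would pin down the free parameter in terms of the initial value $x_{1}$. In Types 1 and 2 this is an elementary inversion of a M\"obius-type expression in $\beta$, yielding the stated formulas $\beta=-1+\theta/(x_{1}-\theta)$ and $\beta=(\theta'/\theta)((\theta'-\theta)/(x_{1}-\theta)-1)$, respectively. In Type 3 one solves $x_{1}/\rho=\cos\phi-\sin\phi\tan(\phi+\omega)$ for $\omega$, rearranges to $\tan(\phi+\omega)=\cot\phi-(x_{1}/\rho)\csc\phi$, and applies $\arctan$. Here lies the main obstacle: the correct branch must be chosen so that $\omega\in[0,2\pi)$, which forces the case split on the sign of $\alpha$ (giving the extra $+\pi$ in the definition of $\phi$ when $\alpha<0$). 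The residual exceptional case $\alpha=0$ in Type 3 has to be treated by direct iteration of $x_{j+1}=\gamma/x_{j}$, which is period two and is listed separately.

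Everything else (checking that $\gamma\neq 0$ prevents the roots from vanishing, and that the formulas indeed satisfy the original recurrence for generic initial data) is a routine verification best postponed to the write-up proper.
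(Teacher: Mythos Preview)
Your proposal is correct and follows essentially the same route as the paper: reduce via $x_j=c_{j+1}/c_j$ to the linear recursion, invoke the standard closed forms for $c_j$ according to the sign of $\Delta$, take the ratio, and solve for the single surviving parameter from $x_1$. One small clarification: the $+\pi$ correction in the definition of $\phi$ when $\alpha<0$ is there to make $\phi$ the actual argument of the complex root $Z=\tfrac{\alpha}{2}+\tfrac{i}{2}\sqrt{-\Delta}$ (whose real part is then negative), rather than to force $\omega$ into $[0,2\pi)$; but this does not affect the validity of your plan.
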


We analyse now a few properties of the solutions obtained that are useful for the applications we have in mind.

\section{General properties of the solutions}\label{sec:prop}
By \eqref{formphi} we know that the rational recursion $ x_{j+1}= \varphi(x_{j}), \; j \geq 1$ is defined by the rational map $\varphi(t)= \alpha + \frac{\gamma}{t}$, for $t\neq 0$. We assumed that $\gamma \neq 0$ \ero{(otherwise $x_j=\alpha, \forall j$ will be a trivial solution) and the initial point must be $0\neq x_{1} \not\in  \mathcal{Z}$.} We now give some properties of the solutions, some of them analytical.

\subsection{Reversing initial conditions}
In some cases we do not know what is $x_1$ but we know the value $x_{r}$ for some $r>1$. Usually we can not reverse the role of the recursion  $x_{j+1}= \varphi(x_{j})$ but it is not the case for $\varphi$.
\begin{lema}\label{lema reverse}
  Let $\psi$ be the inverse function of $\varphi$ and $x_{r}$ a value of the solution for some $r>1$. Then there exists a unique $x_1=y_{r}$ where $y_{j+1}= \psi(y_{j})$ and $y_1=x_{r}$.
\end{lema}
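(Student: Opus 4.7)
The plan is to run the inverse map $\psi$ forward starting from $x_r$ and show that, step by step, it reproduces the original orbit in reverse, so that after $r-1$ steps we land on $x_1$.

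First I would set $y_1:=x_r$ and observe that the identity $\psi\circ\varphi=\mathrm{id}$ (on the domain of $\varphi$) gives, for each $j$ with $1\le j< r$, the relation $\psi(x_{j+1})=\psi(\varphi(x_j))=x_j$. I would then prove by a short induction on $k=1,2,\dots,r$ that $y_k=x_{r-k+1}$. The base case is the definition $y_1=x_r$. For the inductive step, assuming $y_k=x_{r-k+1}$ with $k<r$, the recurrence gives
\[
y_{k+1}=\psi(y_k)=\psi(x_{r-k+1})=\psi(\varphi(x_{r-k}))=x_{r-k},
\]
which is the statement for $k+1$. Setting $k=r$ yields $y_r=x_1$, as claimed.

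For uniqueness, I would just note that the sequence $(y_j)$ is determined recursively by $y_1=x_r$ and $y_{j+1}=\psi(y_j)$, so there is no freedom; any initial value $x_1$ producing $x_r$ after $r-1$ applications of $\varphi$ must coincide with $y_r$, because $\varphi^{r-1}(y_r)=\varphi^{r-1}(x_1)=x_r$ and $\psi^{r-1}$ applied to $x_r$ yields $y_r$.

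The only subtlety, and what I would flag as the main point to check rather than a real obstacle, is that each application of $\psi$ in the chain is legitimate, i.e. that $y_k\neq\alpha$ for $k<r$ so that $\psi(y_k)=\gamma/(y_k-\alpha)$ is defined. But by the induction above $y_k=x_{r-k+1}$ with $r-k+1\ge 2$, hence $x_{r-k+1}=\varphi(x_{r-k})=\alpha+\gamma/x_{r-k}$, and since $\gamma\neq 0$ and $x_{r-k}\neq 0$ (all iterates of the original orbit are assumed nonzero), we get $x_{r-k+1}\neq\alpha$. Thus the backward process never hits the singular point of $\psi$, and the lemma follows.
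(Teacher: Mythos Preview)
Your argument is correct and follows essentially the same route as the paper: both proofs run $\psi$ forward from $y_1=x_r$ and identify $y_k=x_{r-k+1}$ step by step, using $\psi(\varphi(x_{r-k}))=x_{r-k}$, until $y_r=x_1$. Your version is simply more explicit, writing out the induction and verifying that $y_k\neq\alpha$ (so that $\psi$ is defined) from $\gamma\neq 0$ and $x_{r-k}\neq 0$; the paper records the same facts more tersely.
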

\begin{proof}
  We recall that $\varphi(t)= \alpha + \frac{\gamma}{t}$, for $t\neq 0$, has an inverse $\psi(t)= \frac{\gamma}{t - \alpha}$, for $t\neq \alpha$. As $x_{r}$ is a value of the solution for some $r>1$ we see that $x_{j}\neq 0$ for $j=1, ..., r-1$. Therefore, we can reverse the recursion
  $$x_{r}=\varphi(x_{r-1}) \; \Rightarrow \; x_{r-1}=\psi(x_{r})=\psi(y_{1}),$$
  which means that $x_{r-1}=y_{2}$. Proceeding in this same way we obtain $x_{1}=y_{r}$.
\end{proof}

As a consequence we obtain that if $\alpha$ and $\gamma$ are integers or rational numbers then $x_{j}\neq 0$ for all $j$ provided that $x_{1}$ is an  irrational number because the orbit of $y_1=0$ by $\psi$ is a sequence of rational numbers $\{\psi(0), \psi^2(0)=\psi(\psi(0)), \psi^3(0)=\psi(\psi(\psi(0))) \, ...\}$. Although, in applications when we consider a graph with $n$ vertices, we just need to avoid $\{\psi(0), \psi^2(0), \, ...,\psi^n(0) \}$ as initial condition.

\begin{exemplo}\label{example reverse}
   Let $\alpha=-\lambda$ and $\gamma=-1$ be fixed numbers such that $\lambda>2$. We consider the associated recursion
$$b_{j+1}= \varphi(b_{j})= \alpha + \frac{\gamma}{b_{j}}=-\lambda -\frac{1}{b_{j}}$$
and $b_{r+2}=-\frac{2}{\lambda}<0$ for some fixed $r \geq 4$.

In this case, $\Delta= \alpha^2 + 4 \gamma= \lambda^2-4>0$, therefore we have a Type 2 recursion, whose solution is
$$b_{j}=\theta + \frac{\theta'-\theta}{\beta \left(\frac{\theta}{\theta'}\right)^j + 1},$$
where $\theta= \frac{-\lambda}{2} - \frac{1}{2}\sqrt{\lambda^2-4}$ and $\theta'= \frac{-\lambda}{2} + \frac{1}{2}\sqrt{\lambda^2-4}$. It is easy to see that $\theta'+\theta= -\lambda$, $\theta'-\theta=\sqrt{\lambda^2-4}$ and $\theta \theta'=1$ thus
\begin{equation}\label{br2}
b_{j}=\theta + \frac{\theta^{-1} -\theta}{\beta \left(\theta^2\right)^j + 1}.
\end{equation}
As $b_{r+2}=-\frac{2}{\lambda}<0$ we do not have $b_{1}$ available. Even in this case, substituting $b_{r+2}=-\frac{2}{\lambda}$ in \eqref{br2}, by Lemma~\ref{lema reverse}, we can deduce that $\beta=\left(\theta^2\right)^{-r-3}$ therefore
\begin{equation}\label{solution francescoT3T4}
  b_{j}=\theta + \frac{\theta^{-1} -\theta}{\left(\theta^2\right)^{j-r-3} +1}
\end{equation}
is the explicit solution. An easy computation shows that $b_{1}=\theta + \frac{\theta^{-1} -\theta}{\left(\theta^2\right)^{-r-2} +1}$.
\end{exemplo}

\subsection{Extended solutions}
In our applications, the main concern is to predict when $x_{j}$ is positive or negative. Eventually this will tells us the number of eigenvalues below or above the quantity in question. These values compose a discrete set, as $j$ is a natural value. However, as Theorem~\ref{sol general rational difference equation} provides an explicit solution of $ x_{j+1}= \varphi(x_{j}), \; j \geq 1$ that is defined for any $j \in \mathbb{R}$ by $x_{j}= f(j)$ for $j \geq 1$, we can take advantage of that. Except for vertical asymptotes the sign changes occurs when $f(j)=0$. This property has been used in the proof of  \cite[Lemma 4.2]{Jacobs2021}. We give an example that is more illustrative.

\begin{exemplo}\label{ex:extended}
  When $\alpha=1$ and $\gamma=-\frac{1}{4}$ we obtain $\Delta=0$ which is a Type 1  recursion, whose solution is
$$ x_{j}=f(j)=\theta\, \left(1+ \frac{1}{(\beta +j )}\right),$$
where $\theta= \frac{\alpha}{2}=\frac{1}{2}$ and $\beta \in \mathbb{R}$ is defined by the initial point $x_1$. Choosing $x_1=\frac{1}{2}\left(1+\frac{1}{-5+\sqrt{2}}\right)\simeq 0.36$ in such way that $\beta=-6+\sqrt{2}$, we can have change of signal when $\beta +j=0$ producing $j=-\beta=6-\sqrt{2} \approx  4.58$, a vertical asymptote of $f$, or when $1+ \frac{1}{(\beta +j )}=0$ producing $j=-\beta-1=5-\sqrt{2} \approx  3.58$, a zero of the function $f$. Neither of this values is an integer thus $x_{j} \neq 0$ for all $j$.

From the general formula $ x_{j}=\frac{1}{2}\, \left(1+ \frac{1}{(-6+\sqrt{2} +j )}\right)$. We know that $\displaystyle \lim_{j\to \infty} x_{j} =\frac{1}{2}>0$. By continuity of $f$ we know that $ x_{j}>0$ for $j \geq 5$. $\displaystyle \lim_{j\to -\infty} x_{j} =\frac{1}{2}>0$. By continuity of $f$ we know that $ x_{j}>0$ for $j \leq 3$.  Thus, the only possible negative therm is $ x_{4}\approx -0.35$ because $j=4$ is the only integer between $3.58$ and $4.58$.
\end{exemplo}

\subsection{Attracting and repelling sets}
Given the sequence $x_{j+1}= \varphi(x_{j}), \; j \geq 1$  and a fixed point $\theta$, that is, $\varphi(\theta)= \theta$, the iteration of an initial condition contained in some interval $I$ will be repelled from $\theta$, resp. attracted to $\theta$, according to whether $|\varphi'|>1$ resp. $|\varphi'|<1$, in $I$.
$$|x_{j+1}-\theta|=|\varphi(x_{j})-\varphi(\theta)|=|\varphi'(\xi)|\,|x_{j}-\theta|,$$
for some $\xi$ between $x_{j}$ and $\theta$. Therefore
\begin{equation}\label{contraction}
  \begin{cases}
    |x_{j+1}-\theta|<|x_{j}-\theta|, &\text{ if }  |\varphi'(\xi)|<1;\\
    |x_{j+1}-\theta|>|x_{j}-\theta|, & \text{ if }   |\varphi'(\xi)|>1.
\end{cases}
\end{equation}

As $\varphi'(t)= -\frac{\gamma}{t^2}$ and $\gamma \neq 0$ we have two points $\xi=\pm \sqrt{|\gamma|}$ where $|\varphi'(\xi)|=1$. On the other points $|\varphi'(\xi)|<1$ ($t>\sqrt{|\gamma|}$ or $t<-\sqrt{|\gamma|}$) or $|\varphi'(\xi)|>1$ ($-\sqrt{|\gamma|}<t < \sqrt{|\gamma|}$)).

The iterates  $x_{j+1}= \varphi(x_{j}), \; j \geq 1$ not only are attracted to or are repelled from  the fixed points but they do it in a monotone way \ero{when $\gamma<0$} because the sign of $\varphi'(t)= -\frac{\gamma}{t^2}$ depends only on the sign of $\gamma$.  \ero{When  $\gamma>0$ the map is ordering reversing, thus the convergence to the fixed point will be alternating.}

\begin{exemplo}\cite{Belardo2019}\label{exemplo Francesco}
  Let $\alpha=-\lambda$ and $\gamma=-1$ be fixed numbers such that $\lambda>2$. We consider the associated recursion
$$z_{j+1}= \varphi(z_{j})= \alpha + \frac{\gamma}{z_{j}}=-\lambda -\frac{1}{z_{j}}$$
and $z_{1}=-\lambda<0$.

In this case, $\Delta= \alpha^2 + 4 \gamma= \lambda^2-4>0$, therefore we have a Type 2 equation whose solution is
$$z_{j}=\theta + \frac{\theta'-\theta}{\beta \left(\frac{\theta}{\theta'}\right)^j + 1},$$
where $\theta= \frac{-\lambda}{2} - \frac{1}{2}\sqrt{\lambda^2-4}$ and $\theta'= \frac{-\lambda}{2} + \frac{1}{2}\sqrt{\lambda^2-4}$. It is easy to see that $\theta'+\theta= -\lambda$, $\theta'-\theta=\sqrt{\lambda^2-4}$ and $\theta \theta'=1$ thus
$$z_{j}=\theta + \frac{\theta^{-1} -\theta}{\beta \left(\theta^2\right)^j + 1}.$$
As $z_{1}=-\lambda$ we can deduce that $\beta=-1$ therefore
\begin{equation}\label{solution francesco}
  z_{j}=\theta - \frac{\theta^{-1} -\theta}{\left(\theta^2\right)^j -1}
\end{equation}
is the explicit solution.
\begin{figure}[H]
  \centering
  \includegraphics[width=10cm]{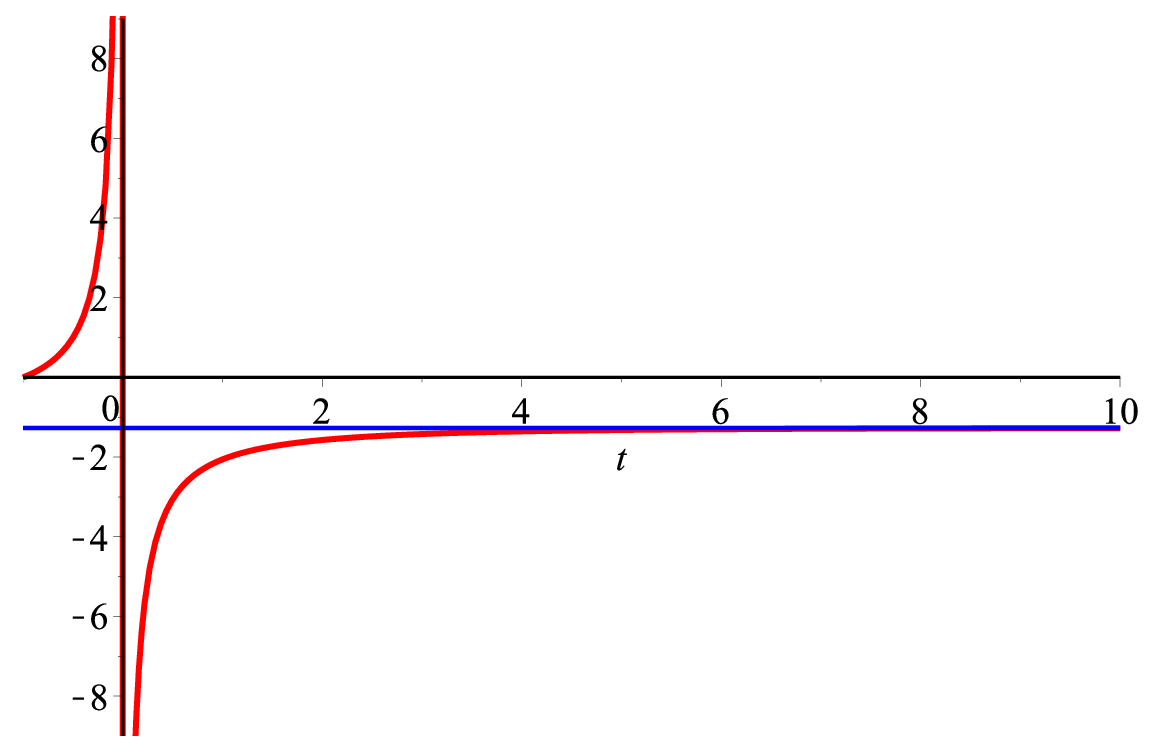}
  \caption{ Function $f(j)=\theta - \frac{\theta^{-1} -\theta}{\left(\theta^2\right)^j -1}$ for $\lambda=\sqrt{2+\sqrt{5}}>2$. In this case, $\theta\approx -1.27202$,  $\theta'\approx -0.786145$, $\beta=-1$. $z_{j}=f(j) \to \theta$ increasingly when $j\to \infty$ }\label{exem_francesco_index}
\end{figure}
A quick examination shows that $\theta^{-1} -\theta>0$ and  $\theta^2> 1$ because $\lambda>2$. Thus $z_{j}<\theta$ and $z_{j}\to \theta$ increasingly when $j\to \infty$ if $z_{1}<\theta$ which is the case for $z_{1}=-\lambda$.

\end{exemplo}

\begin{obs}
We can summarize all the other possibilities for different values of $z_{1}$, when $\alpha=-\lambda$ and $\gamma=-1$ be fixed numbers such that $\lambda>2$, as follows. If $\theta<z_{1}<\theta'$ then $z_{j}\to \theta$ decreasingly when $j\to \infty$ by \eqref{contraction}. If $\theta'<z_{1}<0$ then $z_{j}$ is increasing by \eqref{contraction} and, in some point, $z_{j}>0$. Finally, if $z_{1}>0$ then $z_{2}=-\lambda -\frac{1}{z_{1}} < -\lambda$ therefore $z_{j}\to \theta$ increasingly when $j\to \infty$.
\end{obs}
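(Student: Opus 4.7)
The plan is to combine monotonicity of $\varphi$ with a sign analysis of $\varphi(t)-t$. Since $\gamma=-1$, we have $\varphi'(t)=1/t^{2}>0$, so $\varphi$ is strictly increasing on each of $(-\infty,0)$ and $(0,+\infty)$. Using $\theta+\theta'=-\lambda$ and $\theta\theta'=1$, I would factor $\varphi(t)-t = -(t-\theta)(t-\theta')/t$ to read off the sign of $\varphi(t)-t$ on each of $(-\infty,\theta)$, $(\theta,\theta')$, $(\theta',0)$, and $(0,+\infty)$: it is positive on the first and third, negative on the second and fourth. A short check also gives the ordering $-\lambda<\theta<-1<\theta'<0$, which will be used to relocate iterates after one step.

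For the first case, $\theta<z_{1}<\theta'$, the monotonicity of $\varphi$ on $(-\infty,0)$ together with $\varphi(\theta)=\theta$ and $\varphi(\theta')=\theta'$ shows that $(\theta,\theta')$ is $\varphi$-invariant. The sign information gives $z_{j+1}<z_{j}$, so $(z_{j})$ is strictly decreasing and bounded below by $\theta$; monotone convergence and continuity of $\varphi$ force the limit to be a fixed point of $\varphi$, hence $\theta$. For the third case, $z_{1}>0$, a direct computation gives $z_{2}=-\lambda-1/z_{1}<-\lambda<\theta$, placing $z_{2}$ in $(-\infty,\theta)$. Monotonicity yields $\varphi((-\infty,\theta))=(-\lambda,\theta)\subset(-\infty,\theta)$, and the sign analysis gives $\varphi(t)>t$ on this interval, so $(z_{j})_{j\ge 2}$ is strictly increasing and bounded above by $\theta$; monotone convergence forces the limit to be $\theta$ again.

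The delicate case is the second, $\theta'<z_{1}<0$. Here $\varphi$ is still increasing on $(\theta',0)$ with $\varphi(\theta')=\theta'$ and $\lim_{t\to 0^{-}}\varphi(t)=+\infty$, so $\varphi((\theta',0))=(\theta',+\infty)$; the image is not contained in $(\theta',0)$. The sign analysis shows $z_{j+1}>z_{j}$ as long as $z_{j}\in(\theta',0)$, and I would argue by contradiction that this cannot persist: if $z_{j}\in(\theta',0)$ for every $j$, then $(z_{j})$ would converge to some $L\in(\theta',0]$. If $L<0$, continuity of $\varphi$ would make $L$ a fixed point of $\varphi$, contradicting the absence of fixed points in $(\theta',0)$; if $L=0$, then $z_{j+1}=-\lambda-1/z_{j}\to+\infty$, contradicting $z_{j+1}\to L$. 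So $z_{j}$ must exit $(\theta',0)$, and by the image calculation the only way to exit is upward, so $z_{j+1}>0$ at some step.

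The main obstacle I expect is precisely this last escape argument: the contraction inequality \eqref{contraction} alone is not sufficient, because $\theta'$ is repelling ($|\varphi'(\theta')|=1/(\theta')^{2}>1$) but local repulsion does not automatically convert into escape in finite time. The monotone-convergence-plus-contradiction argument above turns the local repulsion into a global escape cleanly, at the cost of a two-way case split on the hypothetical limit $L$.
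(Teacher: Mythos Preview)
Your argument is correct and, in fact, more complete than what the paper offers. The paper treats this as a remark and merely cites the contraction inequality \eqref{contraction} inline for the first two cases, together with the one-line computation $z_2=-\lambda-1/z_1<-\lambda$ for the third; it does not spell out invariance of the relevant intervals, nor does it justify the finite-time escape in the case $\theta'<z_1<0$.

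The substantive methodological difference is that the paper leans on \eqref{contraction}, which compares $|z_{j+1}-\theta|$ to $|z_j-\theta|$ via $|\varphi'(\xi)|$ at an intermediate point $\xi$. As you correctly anticipate, this is only local information: on $(\theta,\theta')$ the derivative satisfies $|\varphi'|<1$ on $(\theta,-1)$ and $|\varphi'|>1$ on $(-1,\theta')$, so \eqref{contraction} does not by itself give a clean monotone picture across the whole interval, and it certainly does not force escape from $(\theta',0)$ in finitely many steps. Your route---factoring $\varphi(t)-t=-(t-\theta)(t-\theta')/t$ to read off the sign of $\varphi(t)-t$ globally, combining this with the monotonicity of $\varphi$ on each half-line to get invariance, and then closing with monotone convergence (plus the two-way contradiction on the hypothetical limit $L$ in the repelling case)---is both more elementary and genuinely airtight. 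What you gain is a uniform argument that handles all three regimes without appealing to derivative size; what the paper's shortcut buys is brevity at the cost of leaving the escape step and the mixed-derivative region unaddressed.

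One very minor point: in the case $\theta'<z_1<0$ your conclusion ``$z_{j+1}>0$ at some step'' tacitly excludes the measure-zero possibility $z_j=-1/\lambda$, which would give $z_{j+1}=0$ and terminate the orbit. This is consistent with the paper's standing convention that orbits hitting $0$ are treated separately, so no change is needed.
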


\color{blue}
\subsection{Classifying the dynamics according to the stability of fixed points}
In this section we will carefully analyze the possibilities of behavior for a solution of the rational recurrence $x_{j+1}=\alpha +\frac{\gamma}{x_{j}}$ for $0\neq x_1 \not\in \mathcal{Z}$, where $\varphi(t)=\alpha +\frac{\gamma}{t}, \; t \neq 0$, depending on the combination of values of $\alpha$ and $\gamma$ summarizing our previous analyzes.  We recall that the fixed points of $\varphi$ are invariant sets for the dynamics containing the information on the asymptotic behavior of the solutions.  A fixed point $\theta$ ($\varphi(\theta)= \theta$) is hyperbolic attracting (resp. repelling) for $\varphi$ if  $|\varphi'(\theta)|<1$(resp. $|\varphi'(\theta)|>1$) in some neighborhood of $(\theta-\varepsilon, \theta+\varepsilon)$. When $|\varphi'(\theta)|=1$ we have a saddle. In the one dimensional case, a saddle could be laterally repelling  or attracting depending on whether $|\varphi'|$ is smaller/bigger than one on the chosen neighborhood $(\theta-\varepsilon, \theta)$ (left) or $(\theta, \theta+\varepsilon)$(right).

We recall that,  for $\Delta=0$ (Type 1) we have one  fixed point, for $\Delta>0$ (Type 2) we have two  fixed points $\theta$ and $\theta'$. However, for $\Delta=\alpha^2+ 4\gamma <0$ (Type 3) we have no fixed points. We will analyze all Type 1, which has only two possibilities (the case  $\alpha=0$ produces $\gamma=0$,  because $\Delta=\alpha^2+ 4\gamma =0$), and Type 2 cases (excluding the trivial solutions, $\gamma=0$, because we must have a first order equation) which has five possibilities (the case  $\alpha=0$ and $\gamma<0$ produces $\Delta=0^2+ 4\gamma <0$ having no fixed points, so it is of Type 3).

\subsubsection{Type 1  dynamics}
The Type 1 cases are the following:
\begin{enumerate}
	\item  $\alpha<0$, $\gamma=-\frac{\alpha^2}{4}<0$;
	\item  $\alpha>0$, $\gamma=-\frac{\alpha^2}{4}<0$.
\end{enumerate}
In order to avoid redoing all the process of analyzing the dynamical behavior of orbits, we notice that it is possible to explore its  symmetry relying on the previous cases.    We recall from Lemma~\ref{lem:symmetry}  that   $x_1=a$ and  $x_{j+1}=\varphi(x_j) $, where   $\varphi(t)=\alpha +\frac{\gamma}{t}, \; t\neq 0$, if and only if,  $y_1=-a$ and  $y_{j+1}=\psi(y_j) $, where   $\psi(t)=-\alpha +\frac{\gamma}{t}, \; t\neq 0$. Thus we need to analyze just the Case 1.

\textbf{Case 1:}  In this case we have one fixed point, because  $\alpha<0$, $\gamma=-\frac{\alpha^2}{4}<0$ produces $\Delta=0$.

 Here we have $\varphi(t)=\alpha+ \frac{\gamma}{t}, \; t \neq 0$ whose graph is given by Figure~\ref{fig:exp_type1}
 \begin{figure}[H]
 	\centering
 	\includegraphics[width=6cm]{ 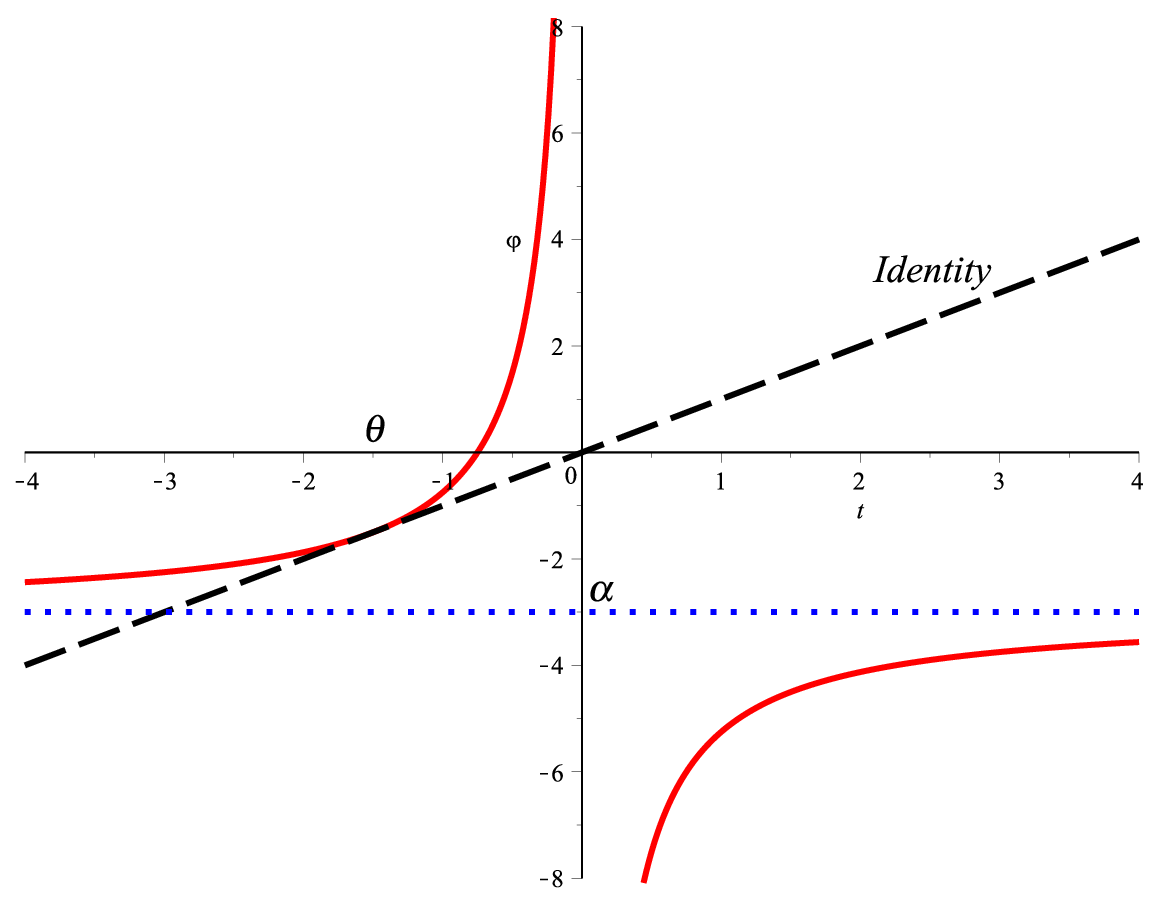} \quad \includegraphics[width=6cm]{ 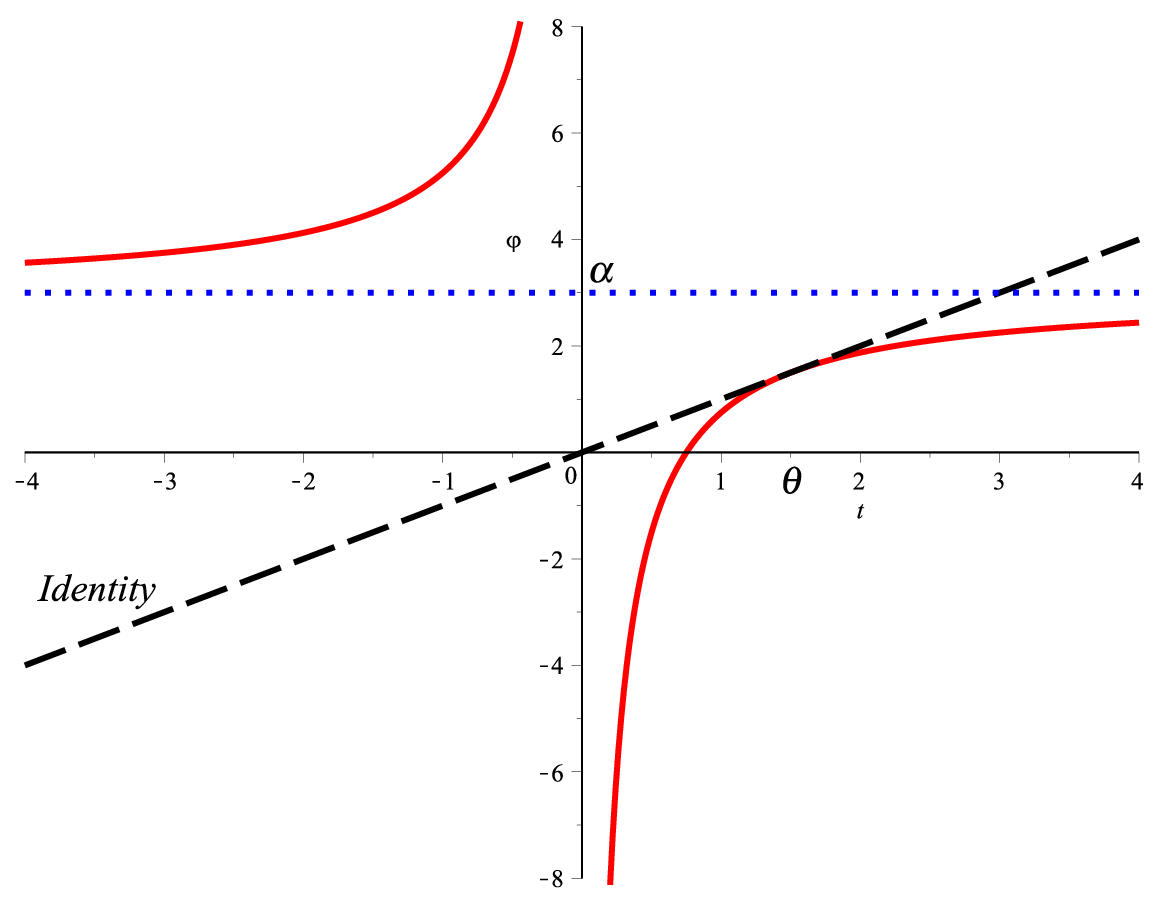}
 	\caption{ Dynamics of $\varphi(t)=-3 -\frac{9}{4 t}, \; t \neq 0$ (left)  and $\varphi(t)=3 -\frac{9}{4 t}, \; t \neq 0$ (right).}\label{fig:exp_type1}
 \end{figure}
 Thus $\theta= \frac{\alpha}{2} <0$ is the unique fixed point.

 We notice that $|\varphi'(t)| =|-\frac{\gamma}{t^2}| <1$  for $t<-\sqrt{-\gamma}=\frac{\alpha}{2}$  or $t>-\frac{\alpha}{2}$.  As a consequence, $\theta$ is a saddle laterally attracting/repelling fixed point ($|\varphi'(\theta)| =1$).  More precisely, define $I=(-\infty, \theta)$ and $I'=(\theta,0)$.

 If $x_1 \in I$ then $x_{j+1}= \varphi(x_j)$  will increasingly converge to $\theta$, because the map is monotonous and contracts distances.

 Notice that, if $t>0$ then $\varphi(t) < \alpha < \theta$ meaning that, these points will be attracted to $\theta$, that is $x_{j+1}= \varphi(x_j)$, monotonously increasing for $j \geq 2$.

Additionally, for $t \in I'=(\theta, 0)/\mathcal{Z}$ as $|\varphi'(t)|>1$ we obtain an increasing sequence of iterates that necessarily become positive for some power  $m>1$. Otherwise, if it was bounded by 0,  it would produce a limit point $c=\lim_{j\to \infty} \varphi^j(t)\leq 0$. If $c=0$ we get a contradiction with the fact that $|\varphi^j(t) -\theta|$ must be unbounded due to the expansiveness of $\varphi$. Thus $c<0$ which, by continuity of $\varphi$, would produce  a second fixed point($\varphi(c)=c > \theta$), an absurd. Consequently, there will be a power $m$ such that $c=\varphi^m(t)>  0$.
 Since for any $\varphi^m(t)>0$, we obtain $\varphi(\varphi^m(t))=\alpha+\frac{\gamma}{\varphi^m(t)}<\alpha< \theta$, or equivalently,  $\varphi^{m+1}(t) \in I $, all the future iterates $m+1, m+2,\ldots$  will be increasingly convergent to $\theta$.

 In order to determine the null set $\mathcal{Z}$, we consider the point $\varphi^{-1}(0)=-\frac{\gamma}{\alpha} =\frac{\alpha}{4} <0$. Since $\varphi^{-1}(0) \in I'$, all the reversal iterates $\varphi^{-2}(0),\varphi^{-3}(0),\ldots$ will monotonously decrease and converge to $\theta$ (because the inverse function is contracting in this interval and $\varphi$ is increasing). In this way we obtain that $\mathcal{Z}=(b_j)_{j\in \mathbb{N}}$ is a sequence of points  contained in $(\theta, \frac{\alpha}{4} ]$ with $b_1=\frac{\alpha}{4} $ and $\lim_{j\to \infty} b_j =\theta$.

We summarize this case as follows
\begin{prop} \label{prop: dyn Type 1 Delta zero} Let  $\alpha<0$ and  $\gamma=-\frac{\alpha^2}{4}<0$ be fixed numbers such that  $\Delta=\alpha^2+ 4\gamma =0$. Then,
	\begin{enumerate}
		\item  $\theta=  \frac{\alpha}{2}$ is an attracting fixed point for  $t\in  (-\infty, \theta)$ and  a repelling fixed point for  $t \in  (\theta, 0)$ .
		\item  $\mathcal{Z}=(b_j)_{j\in \mathbb{N}}$ is a sequence of points  contained in $(\theta, \frac{\alpha}{4}]$ with $b_1=\frac{\alpha}{4}$ and $\lim_{j\to \infty} b_j =\theta$, decreasingly.
		\item If $x_1 \in (-\infty, \theta)$ then $x_j$ increases and converges to $\theta$ for $j \geq 1$.
		\item If $x_1 >0$ then $x_j$  increases and converges to  $\theta$ for $j \geq 2$.
		\item If $x_1 \in (\theta, 0)$ then, there exists $m \in \mathbb{N}$ such that, $x_j$ increases for $j=1,\ldots, m-1$ and $x_m>0$. Then, $x_j$ increases and converges to $\theta$ for $j \geq m+1$.
		\item If $x_1=\theta$ then $x_j=\theta$  for $j \geq 1$.
	\end{enumerate}
\end{prop}

\subsubsection{Type 2 dynamics}
The Type 2 cases are the following:
\begin{enumerate}
	\item  $\alpha=0$, $\gamma>0$;
	\item  $\alpha<0$, $\gamma<0$;
	\item  $\alpha<0$, $\gamma>0$;
	\item  $\alpha>0$, $\gamma<0$;
	\item  $\alpha>0$, $\gamma>0$.
\end{enumerate}

\textbf{Case 1:}  This case $\alpha=0$, $\gamma>0$ produces two fixed points because  $\Delta=0^2+ 4\gamma >0$.\\
Here we have $\varphi(t)=\frac{\gamma}{t}, \; t \neq 0$ whose graph is given by Figure~\ref{fig:exp_zero_pos}
\begin{figure}[H]
	\centering
	\includegraphics[width=8cm]{ 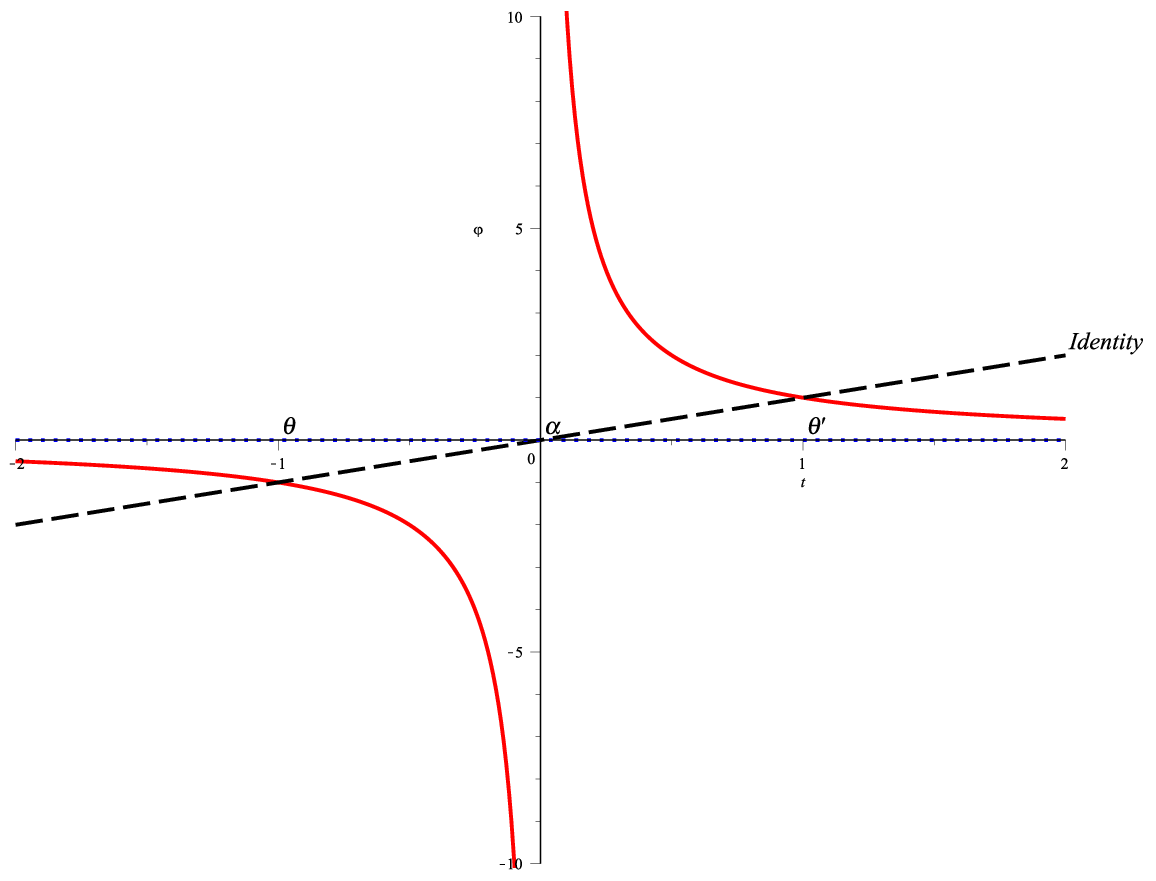}
	\caption{ Dynamics of $\varphi(t)=0 +\frac{1}{t}, \; t \neq 0$.}\label{fig:exp_zero_pos}
\end{figure}
Thus $\theta= \frac{0}{2} - \frac{1}{2}\sqrt{0^2+ 4\gamma} = -\sqrt{\gamma}$ and $\theta'= \frac{0}{2} +\frac{1}{2}\sqrt{0^2+ 4\gamma} = \sqrt{\gamma}$, so that $\theta < 0<\theta'$. The null set is just $\mathcal{Z}=\emptyset$ because $\varphi(t)=0$ has no solution.\\
We notice that $|\varphi'(t)| =|-\frac{\gamma}{t^2}| <1$  for $t<-\sqrt{\gamma}$  or $t>\sqrt{\gamma}$.  As a consequence, $\theta , \theta'$ are saddles, that is, laterally attracting/repelling fixed points($|\varphi'(\theta)|=|\varphi'(\theta')| =1$).   Finally, if $x_1 \in (0, \sqrt{\gamma}) $ then $x_2= \frac{1}{x_1}>\sqrt{\gamma}$ (because  $\varphi$ decreasing for $0<t< \sqrt{\gamma}$, so $x_1< \sqrt{\gamma}$ means that $x_2> \varphi(\sqrt{\gamma})= \sqrt{\gamma}$) and $x_j, j \geq 2$ decreases to $\theta'$. Otherwise, if $x_1 \in (-\sqrt{\gamma},0) $ then $x_2= \frac{1}{x_1}<-\sqrt{\gamma}$ and $x_j, j \geq 2$ increases to $\theta$. \\
We summarize this case as follows
\begin{prop} \label{prop: dyn zero neg Delta posit} Let  $\alpha=0$ and  $\gamma<0$ be fixed numbers such that  $\Delta=\alpha^2+ 4\gamma >0$. Then,
	\begin{enumerate}
		\item  $\theta= -\sqrt{\gamma}$ is an attracting fixed point for  $t<-\sqrt{\gamma}$.
		\item  $\theta'= \sqrt{\gamma}$ is an attracting fixed point for  $t>\sqrt{\gamma}$ .
		\item  $\mathcal{Z}=\emptyset$.
		\item If $x_1 \in (\sqrt{\gamma}, +\infty)$ then $x_j$ decreases and converges to $\theta'$ for $j \geq 1$(resp.
		if $x_1 \in (0, \sqrt{\gamma})$ then $x_j$ decreases and converges to $\theta'$ for $j \geq 2$).
		\item If $x_1 \in (-\infty, -\sqrt{\gamma})$ then $x_j$ increases and converges to $\theta$ for $j \geq 1$(resp.
		if $x_1 \in (-\sqrt{\gamma},0)$ then $x_j$ increases and converges to $\theta$ for $j \geq 2$.)
		\item  If $x_1=\theta$ (resp. $x_1=\theta$) then $x_j=\theta$(resp. $x_j=\theta'$) for $j \geq 1$.
	\end{enumerate}
\end{prop}

\textbf{Case 2:}  This case  $\alpha<0$, $\gamma<0$ produces two fixed points, because $\Delta=\alpha^2+ 4\gamma >0$.\\
Here we have $\varphi(t)=\alpha+\frac{\gamma}{t}, \; t \neq 0$ whose graph is given by Figure~\ref{fig:exp_neg_neg}
\begin{figure}[H]
	\centering
	\includegraphics[width=8cm]{ 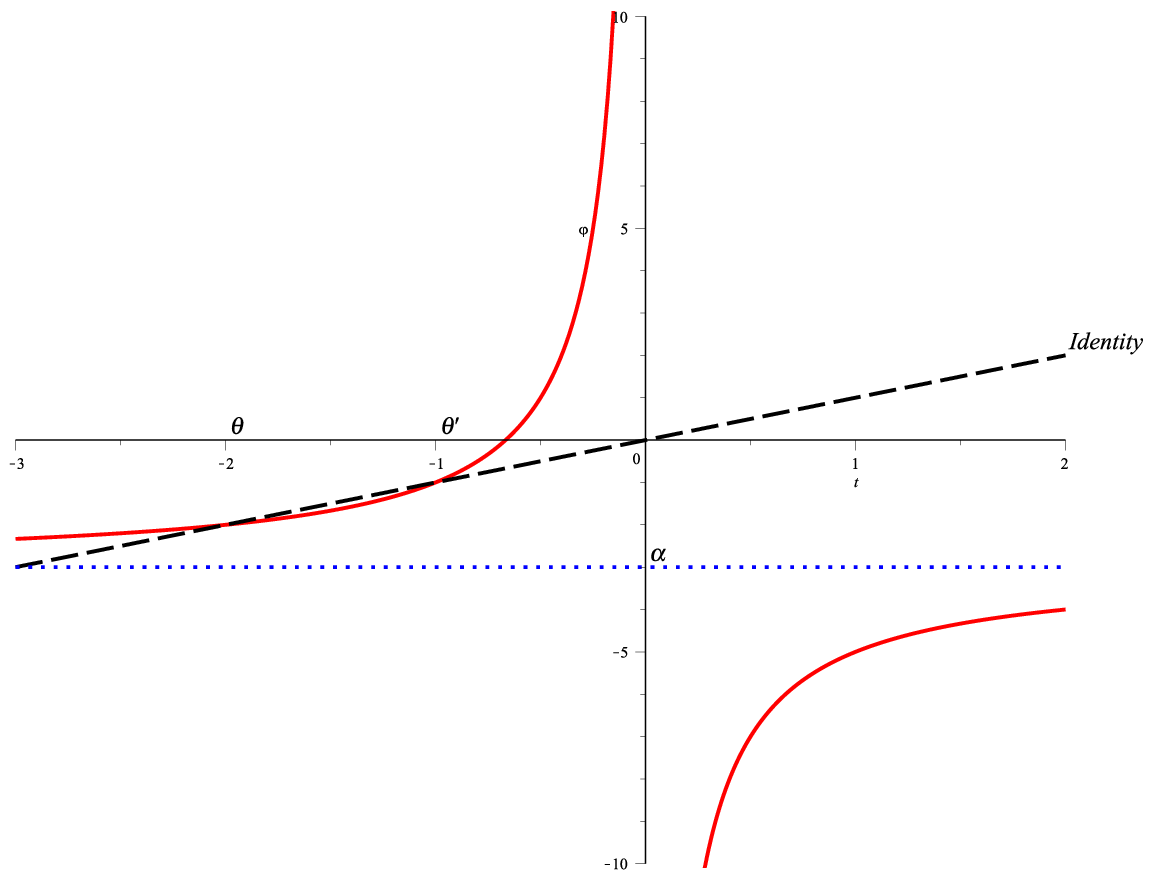}
	\caption{ Dynamics of $\varphi(t)=-3 -\frac{2}{t}, \; t \neq 0$.}\label{fig:exp_neg_neg}
\end{figure}
Thus $\theta= \frac{\alpha}{2} - \frac{1}{2}\sqrt{\alpha^2+ 4\gamma} $ and $\theta'= \frac{\alpha}{2} +\frac{1}{2}\sqrt{\alpha^2+ 4\gamma} $, so that $\theta < \theta'<0$, because $\theta \theta'=-\gamma>0$.

We notice that $\varphi'(t)=-\frac{\gamma}{t^2}>0,\, t\neq 0$, so it is monotonously increasing in each domain of continuity and  $|\varphi'(t)| =|-\frac{\gamma}{t^2}| <1$  for $t<-\sqrt{-\gamma}$  or $t>\sqrt{-\gamma}$ (because $|\gamma|=-\gamma$). Consequently, $|\varphi'(t)|>1$   for $t>-\sqrt{-\gamma}$. Define $I=(-\infty, -\sqrt{-\gamma})$ and $I'=(-\sqrt{-\gamma}, 0)$. We claim that $\theta \in I$ and $\theta' \in I'$.  Since both are negative, we just need to check that $|\varphi'(\theta)|<1$ and  $|\varphi'(\theta')|>1$. Indeed,
\[\varphi'(\theta) = -\frac{\gamma}{\theta^2} = \frac{\theta\theta'}{\theta^2} =\frac{\theta'}{\theta}= \frac{\alpha -\theta}{\theta}<1 \]
if, an only if $\alpha -\theta > \theta$, or equivalently $\theta < \frac{\alpha}{2}$ which is always true. Analogously,
\[\varphi'(\theta') = -\frac{\gamma}{(\theta')^2} = \frac{\theta\theta'}{(\theta')^2} =\frac{\theta}{\theta'}= \frac{\alpha -\theta'}{\theta'}>1 \]
if, and only if, $\alpha -\theta' < \theta'$, or equivalently $\theta' > \frac{\alpha}{2}$ which is always true.
As a consequence, $\theta , \theta'$ are hyperbolic attracting/repelling fixed points with respect to the intervals $I$ and $I'$.   Notice that, if $t=-\sqrt{-\gamma}$ as $\varphi(t) < t$ for $t \in (\theta, \theta')$ we get $\varphi(-\sqrt{-\gamma}) <-\sqrt{-\gamma} \in I$. Meaning that, the points repelled from $I'$ will cross to $I$  and then be attracted to $\theta$. Additionally, for $t \in (\theta', 0)/\mathcal{Z}$ we obtain an increasing sequence of iterates that necessarily become positive for some power  $m>1$. Otherwise, if it was bounded by 0,  it would produce a limit point $c=\lim_{j\to \infty} \varphi^j(t)\leq 0$. If $c=0$ we get a contradiction with the fact that $|\varphi^j(t) -\theta'|$ must be unbounded due to the expansiveness of $\varphi$. Thus $c<0$ which, by continuity of $\varphi$, would produce  a third fixed point($\varphi(c)=c$), an absurd. Consequently, there will be a power $m$ such that $c=\varphi^m(t)>  0$.
Since for any $\varphi^m(t)>0$, we obtain $\varphi(\varphi^m(t))=\alpha+\frac{\gamma}{\varphi^m(t)}<\alpha< \theta$, or equivalently,  $\varphi^{m+1}(t) \in I $, all the future iterates $m+1, m+,\ldots$  will be increasingly convergent to $\theta$.

In order to determine the null set $\mathcal{Z}$, we consider the point $\varphi^{-1}(0)=-\frac{\gamma}{\alpha} <0$. We claim that $-\frac{\gamma}{\alpha}> \theta'$. Indeed,
\[-\frac{\gamma}{\alpha}> \theta' \iff \frac{\theta\, \theta'}{\alpha}>\theta' \iff \theta > \alpha=\theta+\theta'\]
which is always true. Thus, all the reversal iterates $\varphi^{-2}(0),\varphi^{-3}(0),\ldots$ will monotonously decrease and converge to $\theta'$. In this way we obtain that $\mathcal{Z}=(b_j)_{j\in \mathbb{N}}$ is a sequence of points  contained in $(\theta', -\frac{\gamma}{\alpha}]$ with $b_1=-\frac{\gamma}{\alpha}$ and $\lim_{j\to \infty} b_j =\theta'$.

We summarize this case as follows
\begin{prop} \label{prop: dyn neg neg Delta posit} Let  $\alpha<0$ and  $\gamma<0$ be fixed numbers such that  $\Delta=\alpha^2+ 4\gamma >0$. Then,
	\begin{enumerate}
		\item  $\theta=  \frac{\alpha}{2} - \frac{1}{2}\sqrt{\alpha^2+ 4\gamma}$ is an attracting fixed point for  $t\in I$.
		\item  $\theta'= \frac{\alpha}{2} + \frac{1}{2}\sqrt{\alpha^2+ 4\gamma}$ is a repelling fixed point for  $t \in I'$ .
		\item  $\mathcal{Z}=(b_j)_{j\in \mathbb{N}}$ is a sequence of points  contained in $(\theta', -\frac{\gamma}{\alpha}]$ with $b_1=-\frac{\gamma}{\alpha}$ and $\lim_{j\to \infty} b_j =\theta'$, decreasingly.
		\item If $x_1 \in (-\infty, \theta)$ then $x_j$ increases and converges to $\theta$ for $j \geq 1$(resp.
		if $x_1 \in (\theta,\theta')$ then $x_j$ decreases and converges to $\theta$ for $j \geq 1$.)
		\item If $x_1 \in (\theta', 0)$ then, there exists $m \in \mathbb{N}$ such that, $x_j$ increases for $j=1,\ldots, m-1$ and $x_m>0$. Then, $x_j$ increases and converges to $\theta$ for $j \geq m+1$.
		\item If $x_1=\theta$ (resp. $x_1=\theta$) then $x_j=\theta$(resp. $x_j=\theta'$) for $j \geq 1$.
	\end{enumerate}
\end{prop}

\textbf{Case 3:}  This case $\alpha<0$, $\gamma>0$ produces two fixed points because $\Delta=\alpha^2+ 4\gamma >0$.\\
Here we have $\varphi(t)=\alpha+\frac{\gamma}{t}, \; t \neq 0$ whose graph is given by Figure~\ref{fig:exp_neg_pos}
\begin{figure}[H]
	\centering
	\includegraphics[width=8cm]{ 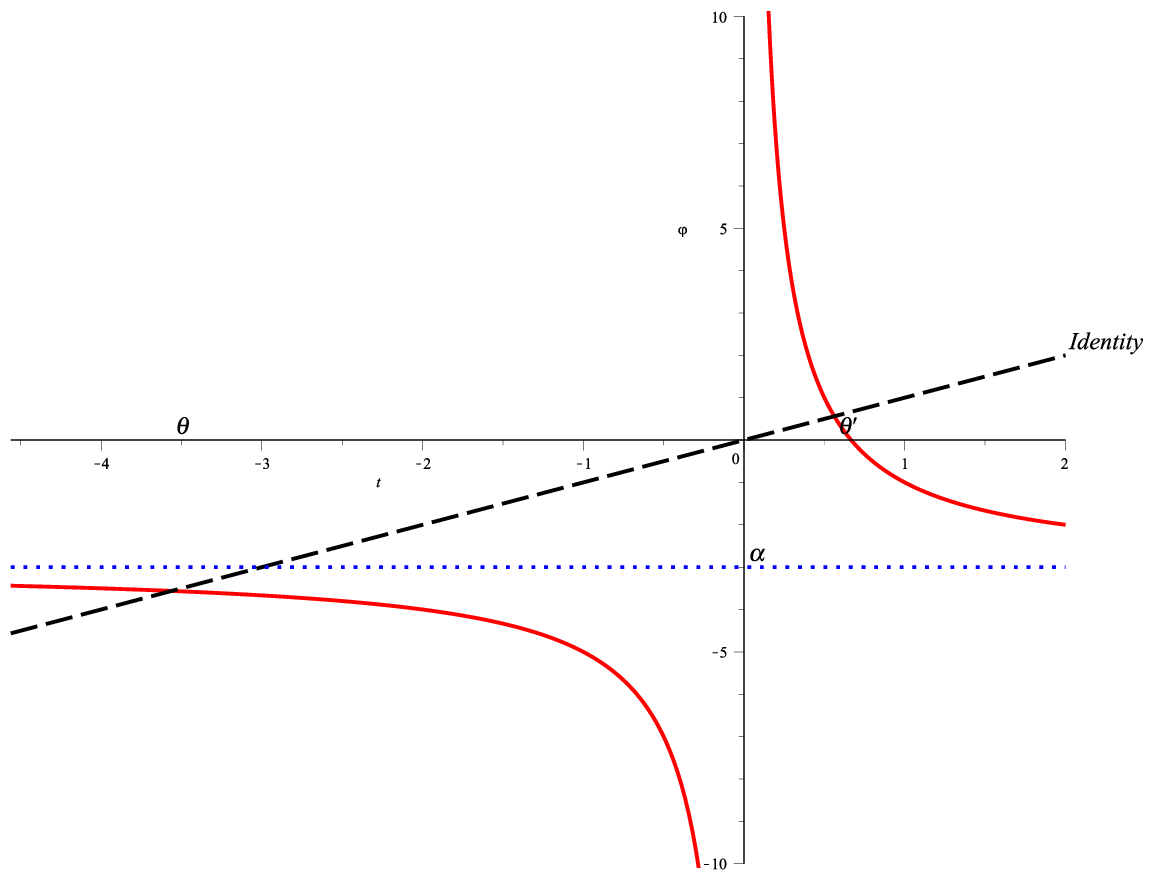}
	\caption{ Dynamics of $\varphi(t)=-3 +\frac{2}{t}, \; t \neq 0$.}\label{fig:exp_neg_pos}
\end{figure}
Thus $\theta= \frac{\alpha}{2} - \frac{1}{2}\sqrt{\alpha^2+ 4\gamma} $ and $\theta'= \frac{\alpha}{2} +\frac{1}{2}\sqrt{\alpha^2+ 4\gamma} $, so that $\theta < 0 < \theta'$, because $\theta \theta'=-\gamma<0$.

We notice that $\varphi'(t)=-\frac{\gamma}{t^2}<0,\, t\neq 0$, so it is monotonously decreasing in each domain of continuity and  $|\varphi'(t)| =|-\frac{\gamma}{t^2}| <1$  for $t<-\sqrt{\gamma}$  or $t>\sqrt{\gamma}$. Consequently, $|\varphi'(t)|>1$   for $-\sqrt{\gamma} <t <\sqrt{\gamma}$ assuming $t\neq 0$.

Define $I=(-\infty, -\sqrt{\gamma}) $and $I'=(0, \sqrt{\gamma}) $.

We claim that $\theta \in I$ and $\theta' \in I'$.  Considering the signal of each one, we just need to check that $\varphi'(\theta)>-1$ and  $\varphi'(\theta')<-1$. Indeed,
\[\varphi'(\theta) = -\frac{\gamma}{\theta^2} = \frac{\theta\theta'}{\theta^2} =\frac{\theta'}{\theta}= \frac{\alpha -\theta}{\theta}>-1 \]
if, an only if $\alpha -\theta <- \theta$, or equivalently $\alpha< 0$ which is always true. Analogously,
\[\varphi'(\theta') = -\frac{\gamma}{(\theta')^2} = \frac{\theta\theta'}{(\theta')^2} =\frac{\theta}{\theta'}= \frac{\alpha -\theta'}{\theta'}<-1 \]
if, and only if, $\alpha -\theta' < -\theta'$, or equivalently $\alpha <0$ which is always true. As a consequence, $\theta , \theta'$ are hyperbolic attracting/repelling fixed points with respect to the intervals $I$ and $I'$.

We notice that for $\theta < t <0$ we have $\varphi(t)< \varphi(\theta)=\theta \in I$, so that all the future iterates are (not monotonously) attracted to $\theta$.

In order to determine the null set $\mathcal{Z}$, we consider the point $c=\varphi^{-1}(0)=-\frac{\gamma}{\alpha} <0$.
We claim that $c \in I'$. Indeed,
\[\varphi'(c) = -\frac{\gamma}{(-\frac{\gamma}{\alpha})^2} = \frac{-\alpha^2}{\gamma} <\frac{-4\gamma}{\gamma}< -1,\]
because $\Delta=\alpha^2+ 4\gamma >0 \iff -\alpha^2 < -4 \gamma$.
As $\theta'$ is a repelling point, , we obtain  that all the future reverse iterates of $\varphi^{-k}(0)$ are (not monotonously) attracted to $\theta'$. In this way we obtain that $\mathcal{Z}=(b_j)_{j\in \mathbb{N}}$ is a sequence of points  contained in $(0, -\frac{\gamma}{\alpha}]$ with $b_1=-\frac{\gamma}{\alpha}$ and $\lim_{j\to \infty} b_j =\theta'$.

If $t > -\frac{\gamma}{\alpha}$ we obtain $\varphi(t)<0$ so the future iterates will be convergent to $\theta$ by the previous analyzed cases.

Finally, we consider $t \in (0, -\frac{\gamma}{\alpha}) /\mathcal{Z}$ and notice that $(0, -\frac{\gamma}{\alpha}) \subset  I'$ thus there will be a power $m$ such that $\varphi^m(t) \not\in I'$(because $|\varphi^j(t) -\theta'|$ must be unbounded due to the expansiveness of $\varphi$).

Does not matter if  $\varphi^m(t)<0$ or $\varphi^m(t)> -\frac{\gamma}{\alpha}$,  all the future iterates $m+1, m+2,\ldots$  will be convergent to $\theta$ by the previous analyzed cases.

We summarize this case as follows
\begin{prop} \label{prop: dyn neg pos Delta posit} Let  $\alpha<0$ and  $\gamma>0$ be fixed numbers such that  $\Delta=\alpha^2+ 4\gamma >0$. Then,
	\begin{enumerate}
		\item  $\theta=  \frac{\alpha}{2} - \frac{1}{2}\sqrt{\alpha^2+ 4\gamma}<0$ is an attracting fixed point for  $t\in I_-$.
		\item  $\theta'= \frac{\alpha}{2} + \frac{1}{2}\sqrt{\alpha^2+ 4\gamma}$ is a repelling fixed point for  $t \in I'_+$ .
		\item  $\mathcal{Z}=(b_j)_{j\in \mathbb{N}}$ is a sequence of points  contained in $(0, -\frac{\gamma}{\alpha}]$ with $b_1=-\frac{\gamma}{\alpha}$ and $\lim_{j\to \infty} b_j =\theta'$.
		\item If $\theta\neq x_1 \in (-\infty, 0) \cup (-\frac{\gamma}{\alpha}, +\infty)$ then $x_j$  converges(not monotonously) to $\theta$ for $j \geq 1$
		\item If $\theta'\neq x_1 \in (0,-\frac{\gamma}{\alpha})/\mathcal{Z}$ then $x_j$  converges(not monotonously) to $\theta$.
		\item If $x_1=\theta$ (resp. $x_1=\theta$) then $x_j=\theta$(resp. $x_j=\theta'$) for $j \geq 1$.
	\end{enumerate}

\end{prop}

\textbf{Cases 4 and 5:} This cases  $\alpha<0$ (resp. $\alpha>0$), $\gamma>0$ produces two fixed points because $\Delta=\alpha^2+ 4\gamma >0$.\\
Here we have $\varphi(t)=\alpha+\frac{\gamma}{t}, \; t \neq 0$ whose graph is given by Figure~\ref{fig:exp_pos_neg-exp_pos_pos}.
\begin{figure}[H]
	\centering
	\includegraphics[width=6cm]{ 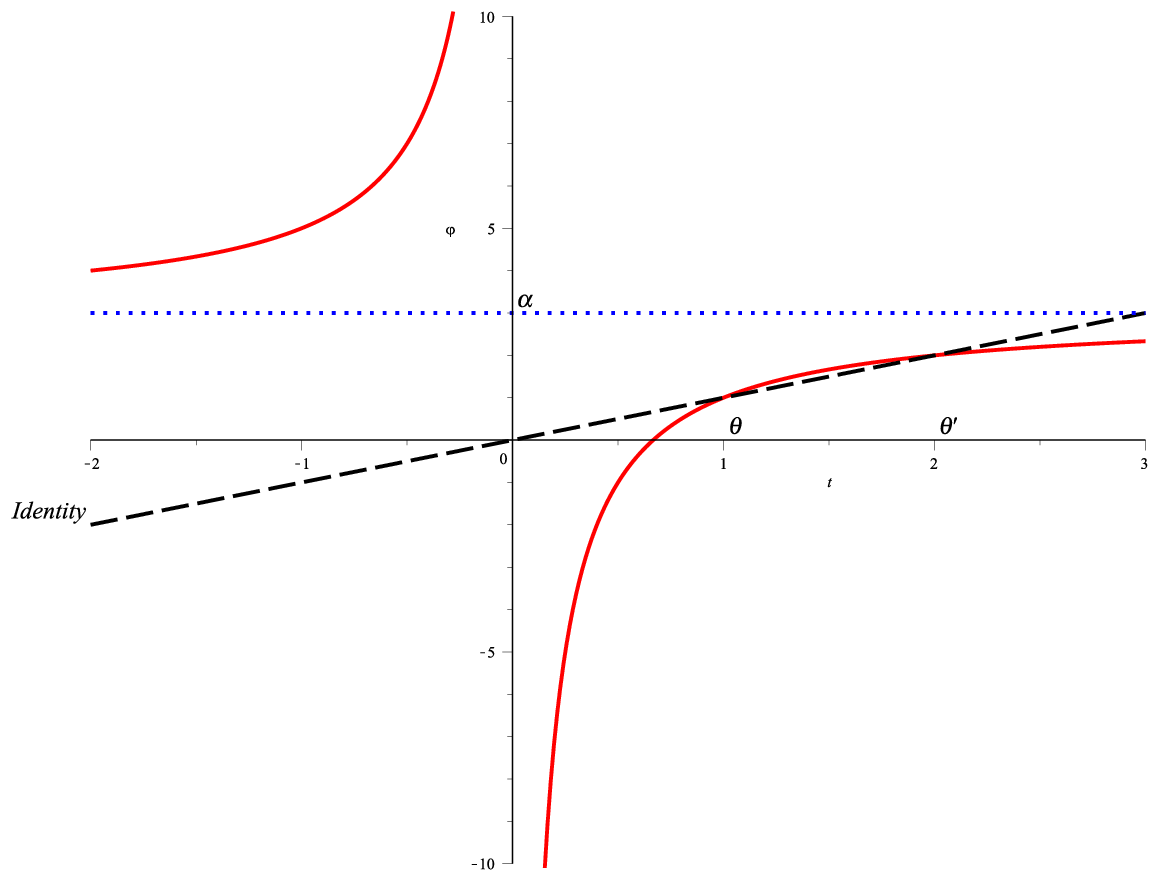}\quad \includegraphics[width=6cm]{ 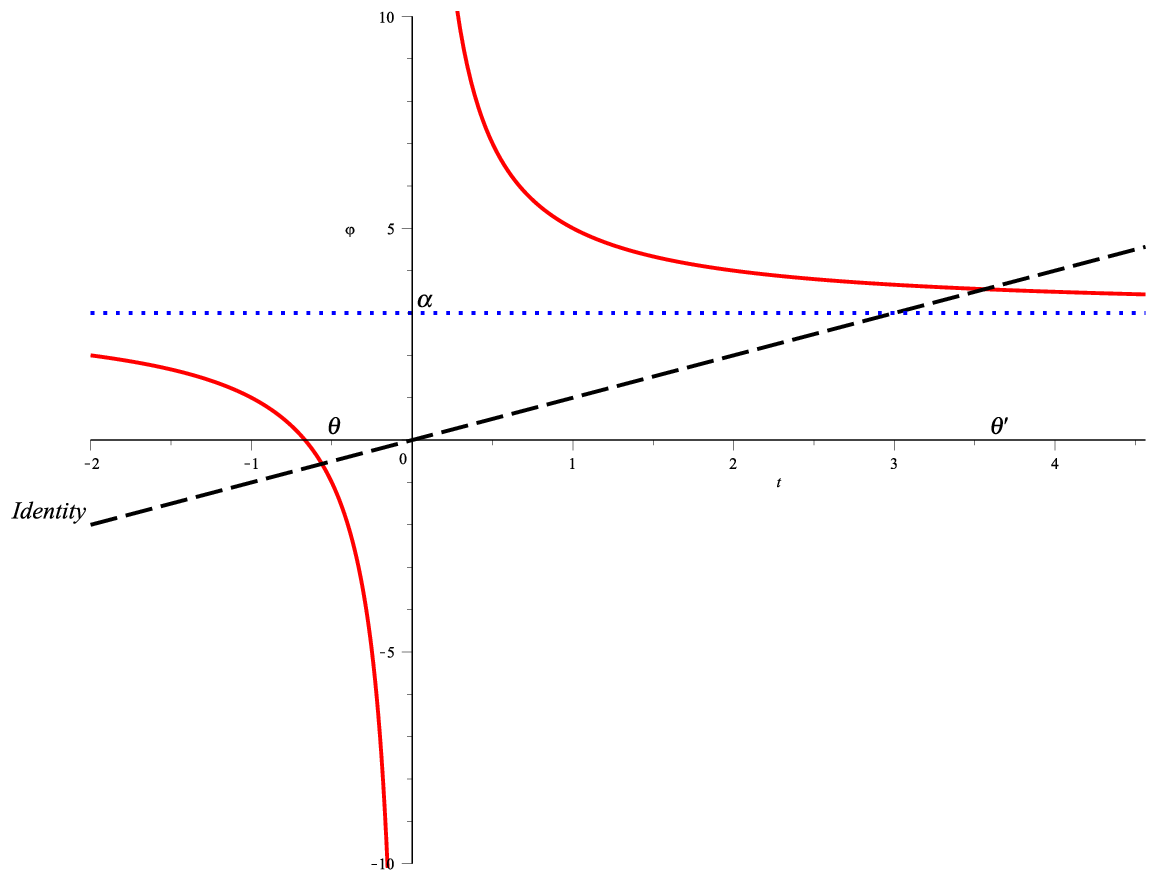}
	\caption{ Dynamics of $\varphi(t)=3 +\frac{2}{t}, \; t \neq 0$  (left)  and  $\varphi(t)=3 -\frac{2}{t}, \; t \neq 0$ (right). }\label{fig:exp_pos_neg-exp_pos_pos}
\end{figure}
Recalling the symmetry property of the solutions from Lemma~\ref{lem:symmetry} , Case (4), $\alpha>0$, $\gamma<0$, is equivalent to Case (2),  $\alpha<0$, $\gamma<0$, by the symmetry $x \to -x$. Analogously, Case (5), $\alpha>0$, $\gamma>0$, is equivalent to Case (3),  $\alpha<0$, $\gamma>0$, by the symmetry $x \to -x$.
All claims in Proposition~\ref{prop: dyn neg neg Delta posit} and Proposition~\ref{prop: dyn neg pos Delta posit} are still valid, but in reversal.

\color{black}

\subsection{Periodic behavior}
Type 3 solutions for $\Delta<0$ are
$$x_{j}= \rho\, \left(\cos(\phi)  - \sin(\phi) \tan(j\phi +\omega) \right),$$
where $\rho=\sqrt{-\gamma},$ $\phi=\arctan\left(\frac{\sqrt{-\alpha^2 - 4 \gamma}}{\alpha}\right)$ and  $\omega \in [0,\,2\pi)$ is defined by the initial point $x_1$. We know that the tangent function is periodic with period $\pi$ but $\omega$ may be not 0. In this case we have a phase translation and the period is $P=\frac{\pi}{\phi}$.

\begin{exemplo} \label{exemplo Laplacian average}
  Let $\alpha=\frac{2}{n}$, $n\geq 3$, and $\gamma=-1$ be fixed numbers. We consider the associated recursion
$$x_{j+1}= \varphi(x_{j})= \alpha + \frac{\gamma}{x_{j}}=\frac{2}{n} -\frac{1}{x_{j}}$$
and $x_{1}=1-(2-\frac{2}{n})=-1+\frac{2}{n}<0$. Computing $\Delta= \alpha^2 + 4 \gamma= \frac{2}{n}^2-4=\frac{4}{n^2}-4<0$. Therefore, we have a Type 3 recursion where
\begin{itemize}
  \item $\rho=\sqrt{-\gamma}=1$;
  \item $\phi=\arctan\left(\frac{\sqrt{-\alpha^2 - 4 \gamma}}{\alpha}\right)=\arctan\left(\frac{\sqrt{4-\frac{4}{n^2} }}{\frac{2}{n}}\right)=\arctan\left(\sqrt{n^2 -1}\right)\in (0,\; \frac{\pi}{2})$;
  \item $\omega=-\phi +\arctan\left(\cot(\phi)-\frac{x_1}{\rho}\csc(\phi)\right) =$\\

      $-\arctan\left(\sqrt{n^2 -1}\right) +\arctan\left(\cot(\phi)-\frac{-1+\frac{2}{n}}{1}\csc(\phi)\right).$

From the identity $\tan(\phi)=\sqrt{n^2 -1}$ we obtain $\csc(\phi)=\frac{n}{\sqrt{n^2 -1}}$ and $\cot(\phi)=\frac{1}{\sqrt{n^2 -1}}$ thus
      $$\omega=-\arctan\left(\sqrt{n^2 -1}\right) +\arctan\left(\frac{1}{\sqrt{n^2 -1}}-\frac{-1+\frac{2}{n}}{1}\frac{n}{\sqrt{n^2 -1}}\right)=$$
      $$=-\arctan\left(\sqrt{n^2 -1}\right) +\arctan\left(\frac{n-1}{\sqrt{n^2 -1}}\right)$$
      Using the formula for the sum $\tan(a+b)$ we conclude that $\omega=-\arctan\left(\frac{n-1}{\sqrt{n^2 -1}}\right)$ and substituting in the previous formula we obtain $\omega=-\frac{\phi}{2}$.
  \item The period is $P=\frac{\pi}{\phi}=\frac{\pi}{\arctan\left(\sqrt{n^2 -1}\right)}$.  Notice that, $\arctan\left(\sqrt{n^2 -1}\right) \in (0,\; \frac{\pi}{2})$. Therefore, $P>2$.
\end{itemize}
\begin{figure}[!ht]
  \centering
  \includegraphics[width=11cm]{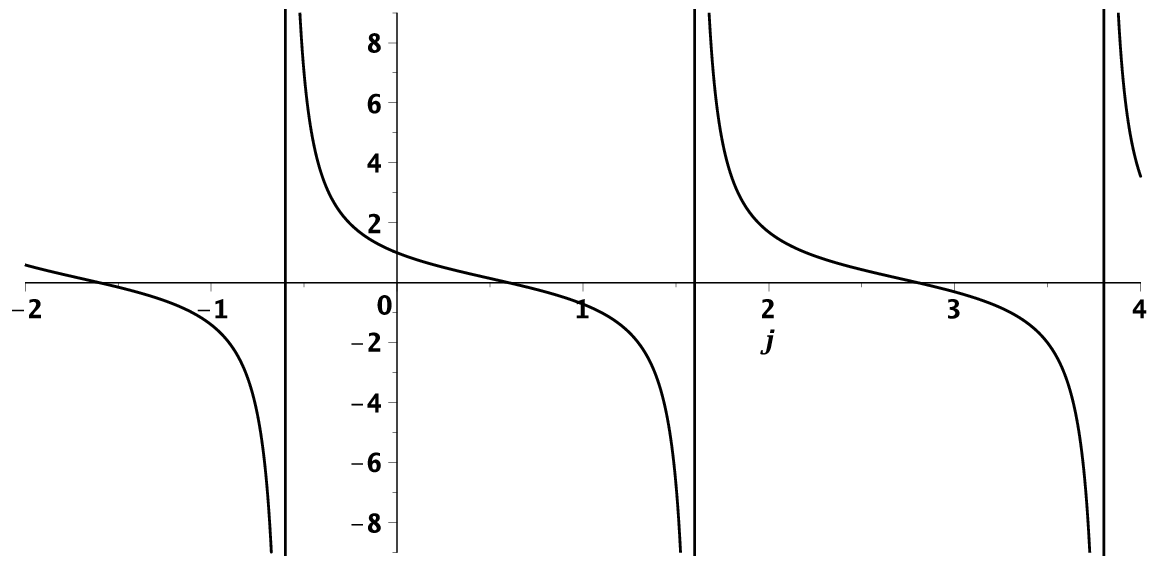}
  \caption{ Function $f(j)=\left(\frac{1}{7}  - \frac{\sqrt{48}}{7} \tan((j-1/2)\phi) \right)$ for $n=7$, $\phi=\arctan(4\sqrt{3})\approx 1.42745$ and the period is $P\approx 2.20084 $.}\label{exem_laplacian_7}
\end{figure}
Thus, the general formula is
$$x_{j}= \left(\frac{1}{n}  - \frac{\sqrt{n^2 -1}}{n} \tan((j-1/2)\phi) \right)$$
because $\csc(\phi)=\frac{1}{n}$ and $\sin(\phi)=\frac{\sqrt{n^2 -1}}{n}$.
\end{exemplo}

\subsection{Dependence on a parameter}
The explicit solution of $ x_{j+1}= \varphi(x_{j}), \; j \geq 1$ which is defined for any $j \in \mathbb{R}$ by $x_{j}= f(j)$ for $j \geq 1$, provided by Theorem~\ref{sol general rational difference equation}, may have same dependence on a given parameter. In the Example~\ref{exemplo Laplacian average} we have that
\begin{equation}\label{eq rec depend param}
   \begin{cases}
     x_{1}=-1+\frac{2}{n}  \\
     x_{j+1}= \frac{2}{n} -\frac{1}{x_{j}}
   \end{cases}
\end{equation}
is depending on the parameter $n\geq 3$, both in the initial point and in the equation. Although, for each value of $n$ the solution presents a similar behavior as we see in the Figure~\ref{exem_laplacian_7}.

For instance, the period  $P(n)=\frac{\pi}{\arctan\left(\sqrt{n^2 -1}\right)}$ depends analytically on $n$. A direct computation shows that $\frac{dP}{dn} <0$ and $\displaystyle\lim_{n \to \infty}P =2$. Therefore, the period is decreasing monotonously to 2.  Varying $n \in[3,\, \infty)$ one can deduce several properties of the class of recursions \eqref{eq rec depend param}.

\section{Applications on limit points}\label{sec:limit}

A real number $r$ is said to be a limit point of the spectral radii of graphs if there exists a sequence $\{G_k\}$ of graphs such that
$$\rho(G_i) \not= \rho(G_j ), \;i \not = j \mbox{ and }\lim_{k\rightarrow \infty} \rho(G_k) = r,$$
where $\rho(G)$ is the spectral radius (or index) of the graph $G$.

A landmark paper in this area is due to J. Shearer \cite{SHEARER1989} who proved that any real number $\lambda  \geq \sqrt{2+\sqrt{5}}$  is a limit point of the spectral radii of graphs. Precisely, there exists an infinite sequence of graphs $G_k, ~k=1,2,\ldots$, whose spectral radius $\rho(G_1)< \cdots <\rho(G_k) < \lambda $ is an increasing  sequence and $\displaystyle\lim_{k \rightarrow \infty } \rho(G_k) =\lambda$.

In general, the techniques used to prove that a real number is a limit point are intricate. By way of an example, we show here that our results may be potentially applicable for finding limit points of spectral radii of graphs.

Consider the starlike tree $T_{l,m,n}$ illustrated in Figure \ref{fig:star} composed by paths $P_l, P_m$ and $P_n$ having one end point each joined by an edge to an isolated vertex.

\begin{figure}[H]
\begin{center}
\begin{tikzpicture}
[scale=1.6,auto=left,every node/.style={circle,scale=0.7}]
\foreach \i in {0,1,2,3}{
     \node[draw,circle,fill=black,scale=.4] (\i) at (\i, 1){};}

\draw (0) -- (1);
\draw (2) --(3);
\draw[dotted] (2) -- (1);

\foreach \i in {0,1,2,3,4}{
     \node[draw,circle,fill=black,scale=.4] (\i) at (\i, 0){};}

\draw (0) -- (1)--(2);
\draw (4) --(3);
\draw[dotted] (2) -- (3);

\foreach \i in {0,1,2,3,4,5}{
     \node[draw,circle,fill=black,scale=.4] (\i) at (\i, -1){};}

\draw (0) -- (1)--(2);
\draw (5)--(4) --(3);
\draw[dotted] (2) -- (3);

\node[draw,circle,fill=black,scale=.4] (0) at (-1.5, 0){};

\draw (0) -- (0,0);\draw (0) -- (0,-1); \draw (0) -- (0,1);
\draw [decorate,decoration={brace,mirror,amplitude=4pt},xshift=0.4pt,yshift=-0.4pt]
(-.2,0.8) -- (3.2,0.8) node [black,midway,yshift=-1cm]{$l$};
\draw [decorate,decoration={brace,mirror,amplitude=4pt},xshift=0.4pt,yshift=-0.4pt]
(-.2,-0.2) -- (4.2,-.2) node [black,midway,yshift=-1cm]{$m$};

\draw [decorate,decoration={brace,mirror,amplitude=4pt},xshift=0.4pt,yshift=-0.4pt]
(-.2,-1.2) -- (5.2,-1.2) node [black,midway,yshift=-1cm]{$n$};

\end{tikzpicture}

\end{center}
\caption{Trees $T_{l,m,n}$}\label{fig:star}
\end{figure}

\begin{thm} [\cite{CIOABA2010}]\label{thm:cioba}
$$\lim_{n \rightarrow \infty} \rho(T_{1,n,n}) = \sqrt{2+\sqrt{5}}.$$
\end{thm}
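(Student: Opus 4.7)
The plan is to run the J-T algorithm on $T_{1,n,n}$ with $\alpha=\lambda$, collapse each of the two length-$n$ branches with the closed form from Example~\ref{exemplo Francesco}, and combine the three contributions at the central vertex. The resulting algebraic condition characterizing $\rho(T_{1,n,n})$ passes to a clean limit as $n\to\infty$, whose relevant root is precisely $\sqrt{2+\sqrt{5}}$.

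\textbf{Step 1: closed form along each long branch.} For $\lambda>2$ (which suffices since the proposed limit is $>2$ and the spectral radii increase in $n$), running the algorithm from the leaf of a length-$n$ branch up to the vertex adjacent to the center produces exactly the recursion of Example~\ref{exemplo Francesco} with $\alpha=-\lambda$, $\gamma=-1$ and $z_1=-\lambda$. Since $\Delta=\lambda^2-4>0$ we are in the Type 2 case, so Theorem~\ref{sol general rational difference equation} yields
$$z_n=\theta-\frac{\theta^{-1}-\theta}{(\theta^2)^n-1},\qquad \theta=-\frac{\lambda}{2}-\frac{1}{2}\sqrt{\lambda^2-4},$$
with $\theta\theta'=1$, $\theta+\theta'=-\lambda$, and (by Example~\ref{exemplo Francesco}) $z_j<0$ for all $j$ and $z_n\nearrow\theta$ as $n\to\infty$.

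\textbf{Step 2: closing at the central vertex.} The central vertex has three children with final values $z_n$, $z_n$ and $-\lambda$ (the pendant), each attached by an edge of weight $1$. The J-T update rule therefore assigns it
$$a(\text{center})=-\lambda-\frac{2}{z_n}+\frac{1}{\lambda}.$$
All other final weights are strictly negative (the leaf and every intermediate vertex of each path by Step~1, the pendant trivially), and since the Perron root of a tree is simple, Sylvester's law forces $a(\text{center})=0$ at $\lambda=\rho(T_{1,n,n})$, i.e.\
$$z_n=\frac{2\lambda}{1-\lambda^2}.$$

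\textbf{Step 3: limit equation.} Letting $n\to\infty$ and using $z_n\to\theta$ gives the limit equation $\theta=\dfrac{2\lambda}{1-\lambda^2}$. Substituting into the characteristic identity $\theta^2+\lambda\theta+1=0$ satisfied by $\theta$ and clearing denominators yields
$$\lambda^4-4\lambda^2-1=0,$$
whose unique positive root with $\lambda>2$ is $\lambda=\sqrt{2+\sqrt{5}}$.

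\textbf{Step 4: convergence and main obstacle.} Steps 1--3 only give a necessary condition satisfied by any limit point of $\rho(T_{1,n,n})$; the delicate part is upgrading this to actual convergence to $\sqrt{2+\sqrt{5}}$. The plan is: (i) Perron--Frobenius applied to the proper subgraph inclusion $T_{1,n,n}\subsetneq T_{1,n+1,n+1}$ makes $\rho(T_{1,n,n})$ strictly increasing in $n$; (ii) a uniform upper bound $\rho(T_{1,n,n})\le\sqrt{2+\sqrt{5}}$ is obtained by evaluating the function $F_n(\lambda):=-\lambda-2/z_n(\lambda)+1/\lambda$ at $\lambda=\sqrt{2+\sqrt{5}}$ and checking, with the explicit $z_n$ and the fact that $z_n<\theta$ with $\theta^2>1$, that $F_n$ has no zero beyond $\sqrt{2+\sqrt{5}}$; (iii) the sequence therefore converges to some $\lambda_\infty\in(2,\sqrt{2+\sqrt{5}}]$, and continuity of $z_n(\lambda)$ in $\lambda$ together with uniform convergence $z_n(\lambda)\to\theta(\lambda)$ on compacta of $(2,\infty)$ forces $\lambda_\infty$ to satisfy the limit equation of Step~3, giving $\lambda_\infty=\sqrt{2+\sqrt{5}}$. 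The algebraic Steps 1--3 are essentially free from Theorem~\ref{sol general rational difference equation}; the real work is this monotonicity-plus-uniform-bound package in Step~4.
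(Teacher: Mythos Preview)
Your proposal follows essentially the same route as the paper: apply the J--T algorithm, use the Type~2 closed form of Example~\ref{exemplo Francesco} along each length-$n$ branch, set the central value $-\lambda-\tfrac{1}{z_1}-\tfrac{2}{z_n}$ to zero, and pass to the limit using monotonicity plus boundedness of $\rho(T_{1,n,n})$. The only substantive difference is in Step~4: the paper obtains the upper bound by citing the known starlike-tree estimate $\rho(T_{1,n,n})<3/\sqrt{2}$ from \cite{Lepovic2001} and then argues $(\theta_n^2)^n\to\infty$ via monotonicity of $\theta(\lambda)$, whereas you sketch a direct argument through $F_n$ and uniform convergence on compacta---your plan is sound but would need to be filled in, and the external citation sidesteps this work entirely.
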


\begin{proof} Let $T_{1,n,n}$ for $n \geq 1$. After choosing the root as the unique vertex of degree three and enumerating the vertices from the leafs to the root, we see that applying the J-T algorithm, we have the recurrences given by $z_1=-\lambda_n$ and $z_i=-\lambda_n-1/z_{i-1}$, which is the sequence of Example \ref{exemplo Francesco}. For a fixed value of $n$, Figure \ref{fig:star2} illustrates the value in each vertex.

\begin{figure}[H]
\begin{center}
\begin{tikzpicture}
[scale=2,auto=left,every node/.style={circle,scale=0.7}]
\foreach \i in {0}{
     \node[draw,circle,fill=black,scale=.4,label=right:$z_{1}$] (\i) at (\i, 1){};}

\foreach \i in {0,1,2,3}{
     \node[draw,circle,fill=black,scale=.4] (\i) at (\i, 0){};}
     \node[label=above:$z_{1}$] at (3, 0){};
     \node[label=above:$z_{2}$] at (2, 0){};
     \node[label=above:$z_{n}$] at (0, 0){};

\draw (0) -- (1);
\draw (2) --(3);
\draw[dotted] (2) -- (1);

\foreach \i in {0,1,2,3}{
     \node[draw,circle,fill=black,scale=.4] (\i) at (\i, -1){};}
     \node[label=above:$z_{1}$] at (3, -1){};
     \node[label=above:$z_{2}$] at (2, -1){};
     \node[label=above:$z_{n}$] at (0, -1){};
\draw (0) -- (1);
\draw (2) --(3);
\draw[dotted] (2) -- (1);

\node[draw,circle,fill=black,scale=.4,label=left:$a(v)$] (0) at (-1.5, 0){};

\draw (0) -- (0,0);\draw (0) -- (0,-1); \draw (0) -- (0,1);
\end{tikzpicture}
\end{center}
\caption{Trees $T_{1,n,n}$ with labels.}\label{fig:star2}
\end{figure}

Here we consider that $\lambda_n=\rho(T_{1,n,n})$ is the spectral radius of $T_{1,n,n}$. Hence all the values $z_i$ are negative (as predicted by the theory), for $n\geq 3$. The value $a(v) = -\lambda_n- \frac{1}{z_1}-\frac{2}{z_n}=0$, because $\lambda_n$ is an eigenvalue.

We notice that $\lambda_n$ is an increasing sequence as $T_{1,n,n}$ is a proper subgraph of $T_{1,n+1,n+1}$. Moreover $\lambda_n < \frac{3}{\sqrt{3-1}} \approx 2.12$ (see, for example,  \cite{Lepovic2001}). Hence $\lambda_n$ converges to some $\lambda_0$. We recall that $z_{j}=\theta_{n} - \frac{\theta_{n}^{-1} -\theta_{n}}{\left(\theta_{n}^2\right)^j -1}, \; j \geq 1$ and  $\theta_n= \frac{-\lambda_{n}-\sqrt{\lambda_{n}^2-4}}{2}$ thus $z_{n}=\theta_{n} - \frac{\theta_{n}^{-1} -\theta_{n}}{\left(\theta_{n}^2\right)^n -1}$.

Now, if $\left(\theta_{n}^2\right)^n\to \infty$, then $z_n \rightarrow \theta_0= \frac{-\lambda_0-\sqrt{\lambda_0^2-4}}{2}$. In order to see that $ \left(\theta_{n}^2\right)^n\to \infty$, we recall that $\theta=\theta(\lambda)$  is a decreasing function for $\lambda \geq 2=\lambda_2=\rho(T_{1,2,2})$ meaning that, uniformly  for $n \geq 3$, $\left(\theta_{n}^2\right)^n > \left(\theta_{3}^2\right)^n \to \infty$ because $\theta_{3}<-1=\theta_{2}$.

We therefore have  $ \lambda_n \rightarrow \lambda_0$, meaning that $\lambda_{0} - \frac{1}{z_1}-\frac{2}{\theta_0}=0$,
which is equivalent to
$$-{\frac {{\lambda_0}^{2}\sqrt {{\lambda_0}^{2}-4}+{\lambda_0}^{3}-\sqrt {{
\lambda_0}^{2}-4}-5\,\lambda_0}{\lambda_0\, \left( \lambda_0+\sqrt {{\lambda_0}^{2}-4} \right) }} =0.
$$
Solving this gives $\lambda_0= \sqrt{2+\sqrt{5}}$.
\end{proof}

The concept of limit point applies to the Laplacian matrix as well. The Laplacian spectral radius of the tree $T_{1,n,n}$ is denoted by $\mu(T_{1,n,n})$

\begin{thm} [\cite{Guo-2008}]\label{thm:guo}
$$\lim_{n \rightarrow \infty} \mu(T_{1,n,n}) = 2+\epsilon \approx 4.382975767,$$
where $\epsilon$ is the real root of $x^3-4x-4$.
\end{thm}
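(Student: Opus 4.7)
The plan is to mirror the proof of Theorem~\ref{thm:cioba}, now applied to the Laplacian matrix. Write $\mu_n := \mu(T_{1,n,n})$. Root $T_{1,n,n}$ at the central vertex of degree $3$ and apply the J-T algorithm with shift $\mu_n$, initialising $a(v) = \deg(v) - \mu_n$. Processing each of the two pendant paths of length $n$ from the leaf inward produces the rational recursion
\[
z_1 = 1-\mu_n, \qquad z_{j+1} = (2-\mu_n) - \frac{1}{z_j}, \quad j \geq 1,
\]
which matches \eqref{formphi} with $\alpha$-parameter $2-\mu_n$ and $\gamma$-parameter $-1$. Classical Laplacian bounds give $4 = \Delta(T_{1,n,n})+1 \leq \mu_n \leq \max\{\deg(u)+\deg(v) : uv \in E\} = 5$, and adding pendant vertices weakly increases the Laplacian spectral radius (by the variational characterisation), so $\mu_n$ is monotone and $\mu_n \to \mu_0 \in (4,5]$. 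On this range the discriminant $\Delta_n := (2-\mu_n)^2 - 4$ is positive, placing us in the Type~2 regime of Theorem~\ref{sol general rational difference equation}; the roots of $t^2-(2-\mu_n)t+1=0$ satisfy $\theta\theta'=1$, with $\theta=\bigl((2-\mu_n)-\sqrt{\Delta_n}\bigr)/2 < -1$ and $\theta' = 1/\theta \in (-1,0)$.

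The eigenvalue condition is collected at the root: the short pendant leaf contributes $-1/(1-\mu_n)$ and each long branch contributes $-1/z_n^{(n)}$ (the superscript recording that the recursion uses $\mu=\mu_n$), so requiring the central entry to vanish yields
\[
3 - \mu_n - \frac{1}{1-\mu_n} - \frac{2}{z_n^{(n)}} = 0.
\]
The explicit Type~2 formula $z_j = \theta + (\theta'-\theta)/\bigl(\beta (\theta/\theta')^j + 1\bigr)$, combined with $\theta/\theta' = \theta^2$ and $|\theta|>1$ uniformly for $\mu_n$ bounded away from $4$, shows that $z_n^{(n)} \to \theta_0 := \bigl((2-\mu_0)-\sqrt{(2-\mu_0)^2-4}\bigr)/2$, since $\beta$, viewed via the formula in Theorem~\ref{sol general rational difference equation} as a continuous function of $(z_1,\theta,\theta')$, stays bounded away from $0$ and $\infty$. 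Passing to the limit in the central-vertex equation and using $\theta_0^2 + (\mu_0-2)\theta_0 + 1 = 0$ yields the system
\[
3 - \mu_0 - \frac{1}{1-\mu_0} - \frac{2}{\theta_0} = 0, \qquad \theta_0^2 + (\mu_0-2)\theta_0 + 1 = 0.
\]

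Setting $\mu_0 = 2+\epsilon$ (so $\epsilon>2$), the first equation solves linearly for $\theta_0 = 2(1+\epsilon)/(2-\epsilon^2)$. Substituting into the characteristic relation and clearing $(2-\epsilon^2)^2$ gives
\[
\epsilon^4 + 2\epsilon^3 - 4\epsilon^2 - 12\epsilon - 8 = (\epsilon+2)(\epsilon^3-4\epsilon-4) = 0.
\]
Since $\epsilon>2$ the first factor is nonzero, so $\epsilon$ is the unique real root of $x^3-4x-4$ and $\mu_0 = 2+\epsilon$ as claimed.

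The hardest step is justifying the limit $z_n^{(n)} \to \theta_0$: the parameter $\mu_n$ itself depends on $n$, so one must combine $\mu_n \to \mu_0$ with uniform exponential decay of $|\theta/\theta'|^{-n} = |\theta|^{-2n}$. This needs a uniform lower bound of the form $|\theta(\mu_n)|\geq 1+\delta$ on $[\mu_2,5]$, which follows from continuity of $\theta$ in $\mu$ together with $\mu_n \geq \mu_2 > 4$. All remaining steps are algebraic and replicate the strategy used in the proof of Theorem~\ref{thm:cioba}.
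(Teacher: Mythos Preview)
Your proof is correct and follows the same route as the paper's: the J-T recursion $z_1=1-\mu_n$, $z_{j+1}=(2-\mu_n)-1/z_j$ on the two long arms, the bounds $4\le\mu_n\le 5$ forcing the Type~2 regime and convergence of $\mu_n$, and passage to the limit in the root equation $3-\mu_n-\frac{1}{1-\mu_n}-\frac{2}{z_n}=0$. The only difference is cosmetic: you substitute $\mu_0=2+\epsilon$ upfront and eliminate $\theta_0$ to factor directly into $(\epsilon+2)(\epsilon^3-4\epsilon-4)$, whereas the paper first solves an irrational equation in $\lambda_0$ and only afterwards verifies that $\lambda_0-2$ is the real root of $x^3-4x-4$.
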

\begin{proof} We denote by $\lambda_n = \mu(T_{1,n,n})$ for a given fixed $n$.  We fisrt notice that $$ 4 < \lambda_n \leq 5.$$
The lower bound follows from \cite{Anderson1985} where it is shown that the Laplacian spectral radius is bounded by $\max \{ d(v) + d(u) | u,v \mbox{ are adjacent} \}$, where $d(v)$ is the degree of the vertex $v$. The upper bound follows from \cite{Grone1994}, where it is shown that the Laplacian spectral radius is greater than or equal the largest degree + 1, with equality if and only if there is a universal vertex.

It follows that the sequence $\lambda_n$ converges to $\lambda_0$, since is limited and is monotonic increasing, as $T_{1,n,n}$ is a proper subgraph of $T_{1,n+1,n+1}$.

The same application of the J-T algorithm to $T_{1,n,n}$ with $\lambda_n= \mu(T_{1,n,n})$, gives the sequence $d_1=1-\lambda_n$ and $d_i=2-\lambda_n -\frac{1}{d_{i-1}}$, for $i > 1$. In the language of Theorem~\ref{sol general rational difference equation}, we have $\alpha = 2- \lambda_n$ and  $\gamma=-1$, leading to $\Delta = \alpha^2 + 4\gamma=\lambda_n^2-4 \lambda_{n} > 0$, as $\lambda_n > 4$. The fixed points are $\theta_{n}=\frac{2-\lambda_n-\sqrt{\lambda_n^2-4\lambda_n}}{2}$ and $\frac{1}{\theta_{n}}$. Now, since $\lambda_n >4$, we have that $\lambda_n < \theta_{n}$, then we know that $d_i \rightarrow \theta_{n}$, increasingly. The value $a(v)$ is now $a(v)= 3 -\lambda_n - 1/d_1 -2/d_n=0$, since $\lambda_n$ is an eigenvalue.

We have $\mu(T_{1,n,n}) \rightarrow \lambda_0$ and, by a similar argument given in Theorem~\ref{thm:cioba}, $\theta_n \rightarrow \theta_{0}$. Therefore $\mu(T_{1,n,n}) \rightarrow \lambda_0$, with
$\lambda_0 = 3-\lambda_0-\frac{1}{d_1}-\frac{2}{\theta_{0}} =3 - \lambda_0 -\frac{1}{1-\lambda_0}-\frac{2}{\theta_{0}}$ and $\theta_0=\frac{2-\lambda_0-\sqrt{\lambda_0^2-4\lambda_0}}{2}$, implying $-\lambda_0 -\frac{1}{1-\lambda_0}-\frac{2}{\theta_0}=0$, which is equivalent to
$${\lambda_0}^{3}+ \left( \sqrt {\lambda_0\, \left( \lambda_0-4 \right) }-6
 \right) {\lambda_0}^{2}+ \left( -4\,\sqrt {\lambda_0\, \left( \lambda_0-4
 \right) }+6 \right) \lambda_0+2\,\sqrt {\lambda_0\, \left( \lambda_0-4
 \right) }=0.$$

Solving this equation gives
$$\frac{\sqrt [3]{54+6\,\sqrt {33}}}{3}+\,{\frac {4}{\sqrt [3]{54+6\,\sqrt {33}}}}+2.$$
To finish the proof, we let $\epsilon = \frac{\sqrt [3]{54+6\,\sqrt {33}}}{3}+\,{\frac {4}{\sqrt [3]{54+6\,\sqrt {33}}}}\approx  2.382975767$. It is easy to check that $\epsilon$ is the only real root of $x^3 - 4x -4$.

\end{proof}

\section{A further look on parameter dependence}\label{sec:param}

In this section we emphasize the parameter dependence of the recurrences and apply to some classes of problems.  The general idea is to study properties of a family of recursions
\begin{equation}\label{eq rec depend param general}
   \begin{cases}
     x_{1}=h(s)  \\
     x_{j+1}= \alpha(s) +\frac{\gamma(s)}{x_{j}}
   \end{cases}
\end{equation}
where $s$ is a real parameter varying in the set $I\subset \mathbb{R}$ and $h(s),\alpha(s), \gamma(s)$ are continuous functions of $s$ (in general we assume to have good differential properties).

We think the dependence on the parameter in the following way: we fix a  parameter $s$ and solve \eqref{eq rec depend param general} with respect to $j$ obtaining $x_j$. As the solution depends on the chosen $s$, as $j$ iterates, $x_j$ is actually a function of $s$, denoted by $x_j(s)$. Now we want to understand the behaviour of $x_{j}(s)$ for all $s \in I$. We represent this dependence in the next table, where $I=\mathbb{N}$,
\[\left|
  \begin{array}{cccccc}
     s\setminus j & 1 & 2 & 3 & 4 & \cdots \\
    1 & x_{1}(1) & x_{2}(1) & x_{3}(1)& x_{4}(1) & \cdots \\
    2 & x_{1}(2) & x_{2}(2) & x_{3}(2) & x_{4}(2) & \cdots \\
    3 & x_{1}(3) & x_{2}(3) & x_{3}(3) & x_{4}(3) & \cdots \\
    4 & x_{1}(4) & x_{2}(4) & x_{3}(4) & x_{4}(4) & \cdots \\
    \vdots & \vdots & \vdots & \vdots & \vdots & \ddots \\
  \end{array}
\right|
\]
which is a pictorial representation of  $x_{j}(s)$.  Suppose we are interested in understanding the behavior of $x_j(s)$, when $j$ is fixed. This means we look downwards in the column $j$. Perhaps a more standard technique would be, for a fixed value of $s$, study the function $x_j(s)$ as $j$ varies. This means we look at the row $j$ from left to right. We recall here that in general we are interested in the variation of $x_j(s)$ when $j$ is a natural number, but our formulas hold for any real $j$ and hence we may take advantage of the analytical properties of $x_j(s)$ (see a simple illustration in Example \ref{ex:extended}).

By Theorem~\ref{sol general rational difference equation} all the parameters involved in the solutions are elementary functions (rational, trigonometric, inverse trigonometric, square, etc) of the parameters $x_1$, $\alpha$ and $\gamma$. Therefore, we can assume that all the iterates $x_j(s)$ are smooth (say $C^\infty$) functions of $s$, in its respective domains with vertical asymptotes in some points, provided that $h(s)$, $\alpha(s)$ and $\gamma(s)$ have this property.

There are infinitely many possibilities of formulations of this problem. We will consider a specific example that may be seen as an application to important problems in spectral graph theory.

\subsection{Laplacian eigenvalues smaller than the average} In a recent publication \cite{Jacobs2021}, it is shown that the number of eigenvalues of any tree (with $n$ vertices) smaller than the average degree $d=2- \frac{2}{n}$ is at least $ \lceil \frac{n}{2}\rceil$. The main technique used was the analysis of the recurrence associated to the application of J-T algorithm to a path having a number $r$ of pendant $P_2$ (see Fig. \ref{newseq_r}).

Applying the J-T algorithm to the Laplacian matrix  to locate $\alpha=d$  we obtain, in each extremal vertex of the pendant paths $P_{x_{1}x_{2}}$ the value
$$x_{1}=1-d=-1+\frac{2}{n}<0$$
 and the next value is
$$x_{2}=2-d -\frac{1}{x_{1}}=\frac{2}{n}-\frac{1}{x_{1}}>1.$$
From this values we follow the processing to a root obtaining
$$b_{1}=r+1-d-\frac{r}{x_{2}}= x_{1} + r\left(1- \frac{1}{x_{2}}\right),$$
and the rest of the values are given by the recursion
\begin{equation}\label{depend param case 2}
   \begin{cases}
     b_{1}=x_{1} + r\left(1- \frac{1}{x_{2}}\right)  \\
     b_{j+1}= \frac{2}{n} -\frac{1}{b_{j}}
   \end{cases}
\end{equation}
for $n \in[3,\; \infty)$.

\begin{figure}[H]
  \centering
  \includegraphics[width=8cm]{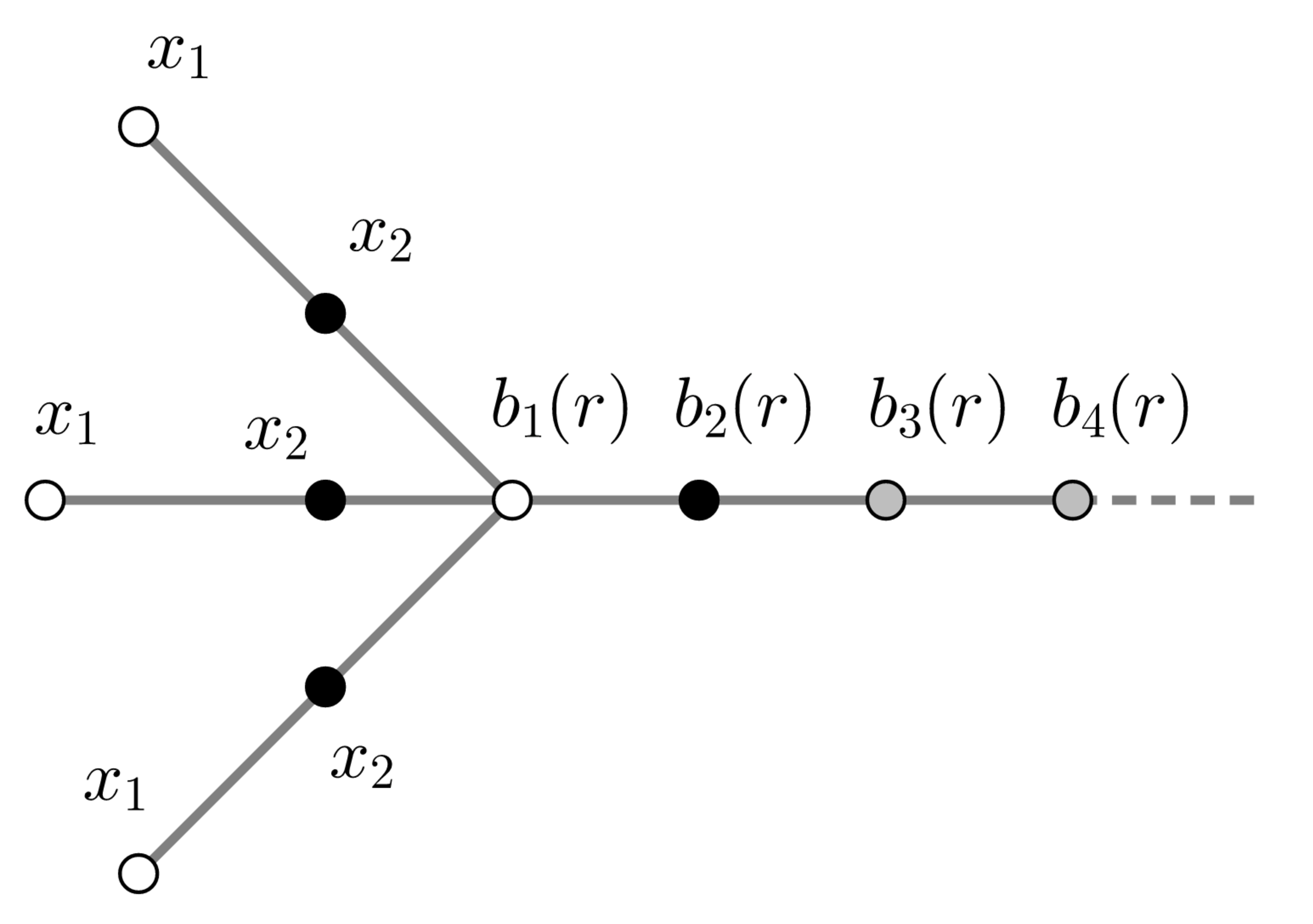}
  \caption{White dots in the vertices means \texttt{Diagonalize($T,-2+\frac{2}{n}$)} produces a \textbf{negative} value,
while black vertices  means a positive value and light gray where we do not know the precise sign (e.g. $b_3(r)$, $b_4(r)$, etc).}\label{newseq_r}
\end{figure}

In what follows, we obtain properties of the signs at the vertices of the paths that were not presented in \cite{Jacobs2021}.

\subsubsection{Initial value}
Our first concern is the dependence of the initial condition with respect to $r$. The next lemma may be found in \cite{Jacobs2021}. For completeness, we present a proof.

\begin{lema}\label{depend   init  cond  to r}
Let $n \geq 8$. If $0 \leq r \leq \lfloor\frac{n}{4}\rfloor$ then
$b_{1}(r)<0$.
\end{lema}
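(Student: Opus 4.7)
The plan is to reduce the claim to an elementary inequality and then verify it by a direct calculation that compares the threshold to $n/4$.

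First I would compute the relevant quantities explicitly. Since $x_1 = -1+\tfrac{2}{n} = \tfrac{2-n}{n}$ and $x_2 = \tfrac{2}{n}-\tfrac{1}{x_1} = \tfrac{2}{n}+\tfrac{n}{n-2}$, a short simplification gives
\[
x_2 \;=\; \frac{n^2+2n-4}{n(n-2)}, \qquad 1-\frac{1}{x_2} \;=\; \frac{4(n-1)}{n^2+2n-4}.
\]
Both of these are positive for $n\geq 3$, so $b_1(r) = x_1 + r\bigl(1-\tfrac{1}{x_2}\bigr)$ is a strictly increasing affine function of $r$ with negative intercept $x_1<0$ and positive slope.

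Next, solving $b_1(r)<0$ for $r$ reduces the statement to the scalar inequality
\[
r \;<\; f(n) \;:=\; \frac{(n-2)(n^2+2n-4)}{4n(n-1)}.
\]
Polynomial long division yields the clean form
\[
f(n) \;=\; \frac{n}{4} + \frac{n^2-8n+8}{4n(n-1)}.
\]
The correction term is nonnegative precisely when $n^2-8n+8\geq 0$, i.e. essentially when $n\geq 8$; at $n=8$ the numerator equals $8>0$, and for $n\geq 8$ it is clearly increasing. Hence for every $n\geq 8$ we have $f(n)>n/4\geq \lfloor n/4\rfloor\geq r$, which gives the desired strict inequality $b_1(r)<0$.

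The only potential obstacle is purely bookkeeping: one must be careful that the denominators $n$, $n-2$, and $n^2+2n-4$ are all positive so that clearing them preserves the direction of the inequality, and one must check that the threshold $n=8$ in the hypothesis is indeed where $n^2-8n+8$ turns nonnegative. Both checks are immediate, so the whole argument is essentially a one-line computation once $b_1(r)$ is rewritten in closed form.
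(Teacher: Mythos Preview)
Your proof is correct and follows essentially the same route as the paper's own argument: both recognize that $b_1(r)$ is an increasing affine function of $r$, compute the zero $r_0=\frac{(n-2)(n^2+2n-4)}{4n(n-1)}$, and verify $r_0>\tfrac{n}{4}$ by reducing to the inequality $n^2-8n+8>0$. The only cosmetic difference is that you express $f(n)=\tfrac{n}{4}+\tfrac{n^2-8n+8}{4n(n-1)}$ via long division, whereas the paper clears denominators directly; the content is identical.
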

\begin{proof}
Assume $n \geq 8$.
Recall that $b_{1}(r)=x_{1} + r(1- \frac{1}{x_{2}})$.
On the non-negative reals, define the linear function into $\mathbb{R}$
$$
g(r) = x_{1} + r \left(1- \frac{1}{x_{2}}\right).
$$
Then $g(0)=x_{1}<0$.
Since $g'(r) = (1- \frac{1}{x_{2}} )>0$, it is increasing.
By continuity there is a unique point $g(r_0)=0$.
Solving for $r_0$ we have
$r_0=\frac{-x_{1}}{1- \frac{1}{x_{2}}} > 0$.
Since $x_1 = -\frac{n-2}{n}$
and
$x_2 = \frac{n^2 + 2n - 4}{n^2 - 2n}$,
$r_0$ depends rationally
on $n$
$$
     r_0=\,{\frac { \left( n-2 \right)  \left( {n}^{2}+2\,n-4 \right) }{4n
 \left( n-1 \right) }}.
$$
Therefore $b_{1}(r)<0$ if and only if $r\leq \lfloor r_0 \rfloor$.
We claim that
$\frac{n}{4} < r_0$ for
$n \geq 7$.
Indeed, the inequality $\frac{n}{4} < r_0$ can be simplified
to $0 < n^2 - 8n + 8$ whose largest root is $4 + 2\sqrt{2} \approx 6.8$.
\end{proof}

\subsubsection{General formula}
The recursion \eqref{depend param case 2} is the same we used in the Example~\ref{exemplo Laplacian average} hence we have a Type 3 recursion, $b_{j}=\rho\, \left(\cos(\phi)  - \sin(\phi) \tan(j\phi +\omega) \right)$ where
\begin{itemize}
  \item $\rho=1$;
  \item $\phi=\arctan\left(\sqrt{n^2 -1}\right)\in (0,\; \frac{\pi}{2})$;
    \item The period is $P=\frac{\pi}{\arctan\left(\sqrt{n^2 -1}\right)}>2$ because $\arctan\left(\sqrt{n^2 -1}\right) \in (0,\; \frac{\pi}{2})$
\end{itemize}

For the phase translation $\omega$ the situation is different because it depends on $r$. Denote  $\omega_{r}$ the phase translation associated with the initial condition $b_{1}=x_{1} + r\left(1- \frac{1}{x_{2}}\right)$ then
$$\omega_{r}=-\phi +\arctan\left(\cot(\phi)-\frac{b_1}{\rho}\csc(\phi)\right).$$
From the identity $\tan(\phi)=\sqrt{n^2 -1}$ we obtain $\csc(\phi)=\frac{n}{\sqrt{n^2 -1}}$ and $\cot(\phi)=\frac{1}{\sqrt{n^2 -1}}$ thus
$$\omega_{r}=-\arctan\left(\sqrt{n^2 -1}\right) +\arctan\left(\frac{1}{\sqrt{n^2 -1}}- b_{1}  \frac{n}{\sqrt{n^2 -1}}\right)=$$
\begin{equation}\label{form wr}
  = \arctan\left(\frac{1-n b_{1} }{\sqrt{n^2 -1}}\right)-\arctan\left(\sqrt{n^2 -1}\right).
\end{equation}
For each fixed $n$ the phase translation $\omega_{r}$ depends differentially on $r$. A straightforward computation shows that
$$\frac{d\omega_{r}}{dr}=\frac{1}{1+\left( \frac{1-n \, b_{1}}{\sqrt{n^2 -1}} \right)^{2}}\frac{ -n\left(1- \frac{1}{x_{2}}\right)}{\sqrt{n^2 -1}} =$$
$$={\frac {-2\,\sqrt {{n}^{2}-1} \left( {n}^{2}+2\,n-4 \right) }{8\,n
 \left( n-1 \right) {r}^{2}-4\, \left( n-1 \right)  \left( {n}^{2}+2\,
n-4 \right) r+ \left( {n}^{2}+2\,n-4 \right) ^{2}}}
<0,$$
therefore $\omega_{r}$ is a strictly decreasing function of $r$. Additionally $\displaystyle\lim_{n \to \infty} \omega_{r} =  -\frac{\pi}{2}+ \arctan\left(-\lim_{n \to \infty} b_1\right)=-\frac{\pi}{4}$, because $\displaystyle \lim_{n \to \infty} b_1(r) =-1$.
Thus \begin{equation}\label{interval wr}
  -\frac{\pi}{2} < \omega_{r} < -\frac{\pi}{4}.
\end{equation}

\subsubsection{Periodicity and sign change}
As we have observed before, the function $j \to f(j):=b_{j}=\left(\cos(\phi)  - \sin(\phi) \tan(j\phi +\omega_{r})\right)$ is actually defined for $j \in \mathbb{R}$, except for vertical asymptotes. It is periodic with period $P>2$ and piecewise decreasing in each interval of the period. As $\displaystyle\lim_{n \to \infty} P=  2$, the behaviour of $b_j$ is very similar to a $\mod2$-periodic function.

From Lemma~\ref{depend   init  cond  to r} we know that $b_1<0$, provided that $1\leq r \leq \lfloor\frac{n}{4}\rfloor$. Using the recurrence formula, we have $b_2=\frac{2}{n} - \frac{1}{b_1}>0$. The main goal of this section is to discover how large is this alternating sign pattern, that is how large is $k$ so that $b_{2k+1}<0$ and consequently $b_{2k+2}>0$. This change in the pattern will happen when we find a $k_{0} >0$ such that $b_{2k_0+1}>0$ (notice that for $k_{0}=0$ we obtain $b_{2k_0+1} =b_1<0$).

More formally we define
$$k_{0}:=\left\{\begin{array}{cc}
          \max ~ \{k~ |~ b_{2k+1}<0 \,\}, & \mbox{ if such $k$ exists}\\
          \infty,  & \mbox{ otherwise. }\\
          \end{array}\right.
$$
Obviously $k_{0} \geq 1$. A remarkable fact is that, if $k_{0}<\infty$ then $b_{2k_0+2}>0$ and $b_{2k_0+3}>0$ is the first consecutive pair of positive numbers in the sequence $b_{j}=b_{j}(r)$.

\begin{figure}[H]
  \centering
  \includegraphics[width=12cm]{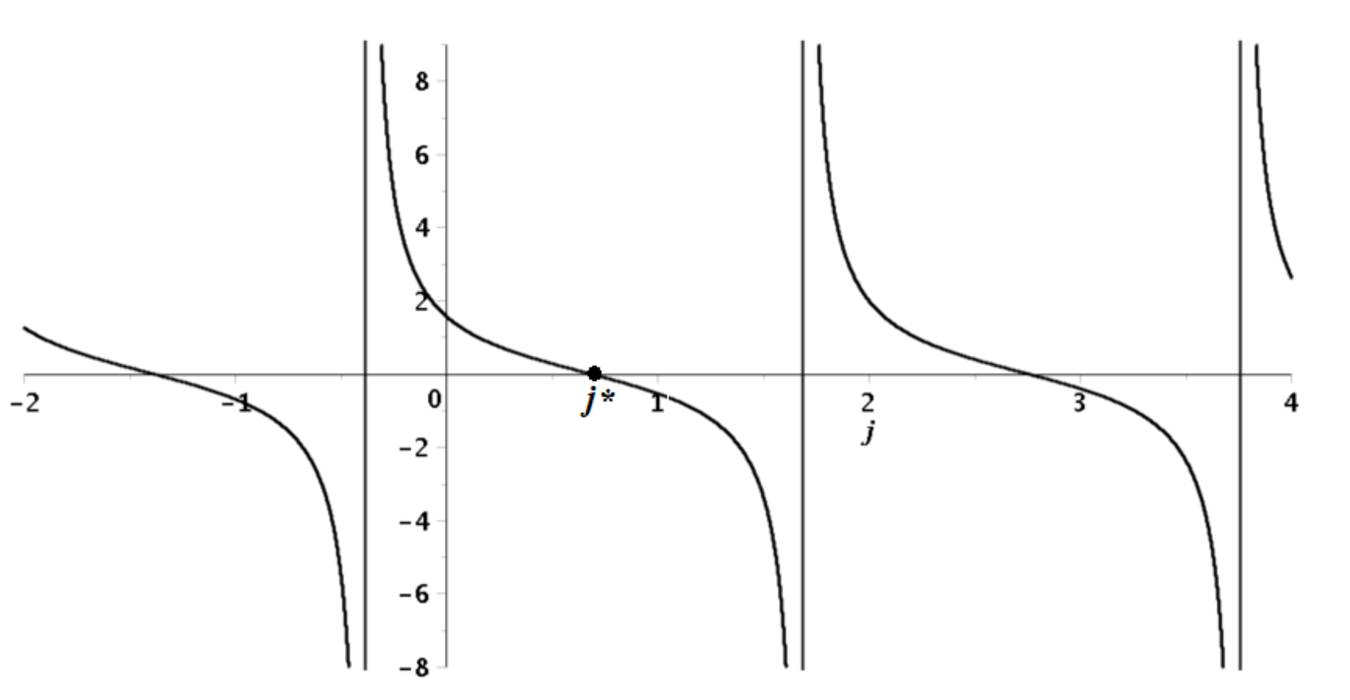}
  \caption{ Extended solution $j \to f(j)=b_{j}$ for $r=2$ and $n=19$. In this case $P = 2.069368956$. The first positive root is $j^{*}=0.6867$.}\label{solut_r2n19}
\end{figure}

\begin{thm}\label{thm:old_mcs_r}
   Let $n\geq 8$ and $1\leq r \leq \lfloor\frac{n}{4}\rfloor$. Then, $k_{0}<\infty$ and it is given by
   \begin{equation}\label{eq:estima_old_mcs_r}
      k_{0}=\left\lfloor \frac{1}{P-2}+ \frac{\omega_{r} -\arctan(\cot(\phi))}{\phi(P-2)}\right\rfloor
   \end{equation}
\end{thm}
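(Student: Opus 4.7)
The plan is to identify the sign structure of the continuous extension $f(j):=b_{j}$ via the explicit Type~3 formula, and then to locate the odd integer $2k+1$ relative to the zeros and vertical asymptotes of $f$, exploiting the drift produced by $P>2$.

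I would first determine the zero/pole structure of $f$. From the Type~3 formula $f(j)=\cos\phi-\sin\phi\,\tan(j\phi+\omega_{r})$ (with $\rho=1$ and $\sin\phi>0$, since $\phi\in(0,\pi/2)$), the zeros satisfy $\tan(j\phi+\omega_{r})=\cot\phi$, yielding $j=j^{*}+mP$ for $m\in\mathbb{Z}$ with $j^{*}:=(\pi/2-\phi-\omega_{r})/\phi$, while the vertical asymptotes lie at $j^{*}+1+mP$. Because $\tan$ ranges monotonically over $(-\infty,+\infty)$ on each branch, $f$ strictly decreases from $+\infty$ to $-\infty$ between consecutive asymptotes and crosses zero exactly once. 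Hence the negative regions of $f$ are precisely the length-$1$ intervals $(j^{*}+mP,\,j^{*}+mP+1)$, while the intervening positive regions have length $P-1$.

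I would next pin down where $j^{*}$ sits. Lemma~\ref{depend   init  cond  to r} guarantees $b_{1}<0$, which forces $j=1$ into some negative interval; since $j^{*}\in[0,P)$ and $P>2$, only $m=0$ is admissible, so $j^{*}\in(0,1)$. For an arbitrary $k\geq 0$, the containment $2k+1\in(j^{*}+kP,\,j^{*}+kP+1)$ rewrites as $0<1-j^{*}-k(P-2)<1$; the right inequality is automatic, and the left reduces to $k<(1-j^{*})/(P-2)$. This shows that $b_{2k+1}<0$ for every integer $k$ with $0\leq k\leq K:=\lfloor (1-j^{*})/(P-2)\rfloor$, and the recurrence then yields $b_{2k+2}=\frac{2}{n}-\frac{1}{b_{2k+1}}>0$.

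To show that the alternating pattern breaks exactly at $K+1$, I would check that $(K+1)(P-2)>1-j^{*}$ forces $2K+3<j^{*}+(K+1)P$, while $K(P-2)\leq 1-j^{*}<2-j^{*}$ forces $2K+3>j^{*}+KP+1$; hence $2K+3$ lies in the $K$-th positive region and $b_{2K+3}>0$, so $(b_{2K+2},b_{2K+3})$ is the first consecutive pair of positive values, establishing $k_{0}=K<\infty$. The announced formula then follows from the identity $\arctan(\cot\phi)=\pi/2-\phi$ together with the elementary rearrangement
$$
\frac{1-j^{*}}{P-2}=\frac{1}{P-2}+\frac{\omega_{r}-\arctan(\cot\phi)}{\phi(P-2)}.
$$
The only point that demands care is the strictness of $j^{*}\in(0,1)$ and of $P>2$: together they ensure that the drift $k(P-2)$ traverses the width-$1$ negative-region window at exactly the integer $K+1$, so the alternating pattern breaks cleanly and $k_{0}$ is determined uniquely by the floor expression.
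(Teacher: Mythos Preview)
Your proposal is correct and follows essentially the same route as the paper: both arguments exploit the $P$-periodicity of the extended solution to reduce the sign of $b_{2k+1}$ to the position of $1-k(P-2)$ relative to the first positive zero $j^{*}$, obtain $k_{0}=\lfloor (1-j^{*})/(P-2)\rfloor$, and then rewrite this via $\arctan(\cot\phi)=\pi/2-\phi$. Your treatment of the branch choice (``$m=0$'') is in fact tighter than the paper's, which appeals to the plots in Figure~\ref{test_m}; the only place that would benefit from one extra line is the assertion $j^{*}\in[0,P)$, which you invoke without proof---it follows immediately from $\omega_{r}=-\phi+\arctan(\cot\phi-b_{1}\csc\phi)$ and $b_{1}<0$, since then $\cot\phi-b_{1}\csc\phi>\cot\phi$ forces $\pi/2-\phi-\omega_{r}\in(0,\phi)$, i.e.\ $j^{*}\in(0,1)$ directly.
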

\begin{proof}
Our approach is to use the fact that $b_j$ has a period close to $2$ in order to compare different odd indices with $b_1$ which we know to be negative. For $b_{2k+1}$ we compare
$$b_{2k+1}=b_{1+2k -kP+ kP}=b_{1 -(P-2)k + kP}=b_{1 -(P-2)k}.$$
Since  $b_{1 }= b_{1+kP}<0$, $b_{j}$ is decreasing and $-(P-2)k<0$, we conclude that the correspondence $k \to b_{2k+1}$ is strictly increasing while  $1-(P-2)k > j^*$ where $j^*$ is the first positive zero (see Figure~\ref{solut_r2n19}). Stronger than that is the fact that if $k$ is big enough so that $1-(P-2)k < j^*$ then we have $b_{2k+1}>0$. This shows that $k_{0}=\left\lfloor \hat{k}\right\rfloor$, where $\hat{k}:=\frac{1-j^*}{P-2}$, in particular $k_{0}<\infty$. In order to estimate $k_{0}$ we are going to introduce a new function
\begin{equation}\label{eq:aux_k_0}
   G(k):=f(j^*)=f(1-(P-2)k)\; k\geq 0,
\end{equation}
where, $f(j)=\left(\cos(\phi)  - \sin(\phi) \tan(j\phi +\omega_{r})\right)$ is the explicit solution of the recurrence. Obviously, $\hat{k}$ is the first positive root of $G$. A simple calculation shows that $G(\hat{k})=0$ is equivalent to $\cos(\phi)  - \sin(\phi) \tan((1-(P-2)\hat{k})\phi +\omega_{r})=0$. Now a (tedious) computation shows that it is equivalent to the existence of some $m \in \mathbb{Z}$ such that
$$\hat{k}=\frac{1}{P-2}+ \frac{\omega_{r} -\arctan(\cot(\phi))}{\phi(P-2)} -m \frac{P}{P-2}.$$

We claim that  $m=0$ is the value that recovers the first positive root. By properties of periodic functions we just need to examine the values $m\in\{-1,0,1\}$. To do that we introduce a new function $H:\mathcal{A} \to \mathbb{R}$ given by
\begin{equation}\label{eq:H_func_k_hat}
H(n,r,m)= \frac{1}{P-2}+ \frac{\omega_{r} -\arctan(\cot(\phi))}{\phi(P-2)} -m \frac{P}{P-2}
\end{equation}
where $m$ is a fixed parameter and $\mathcal{A}:=\{(n,r)\;|\; n \geq 8, \; 1\leq r \leq \lfloor\frac{n}{4}\rfloor \}$. The graphs of Figure~\ref{test_m} below confirm the correct choice $m=0$ because decreasing or increasing $m$ will generate negative or positive roots, respectively, showing that we reach the first positive root in $m=0$, which concludes our proof.
\begin{figure}[H]
  \centering
  \includegraphics[width=15cm]{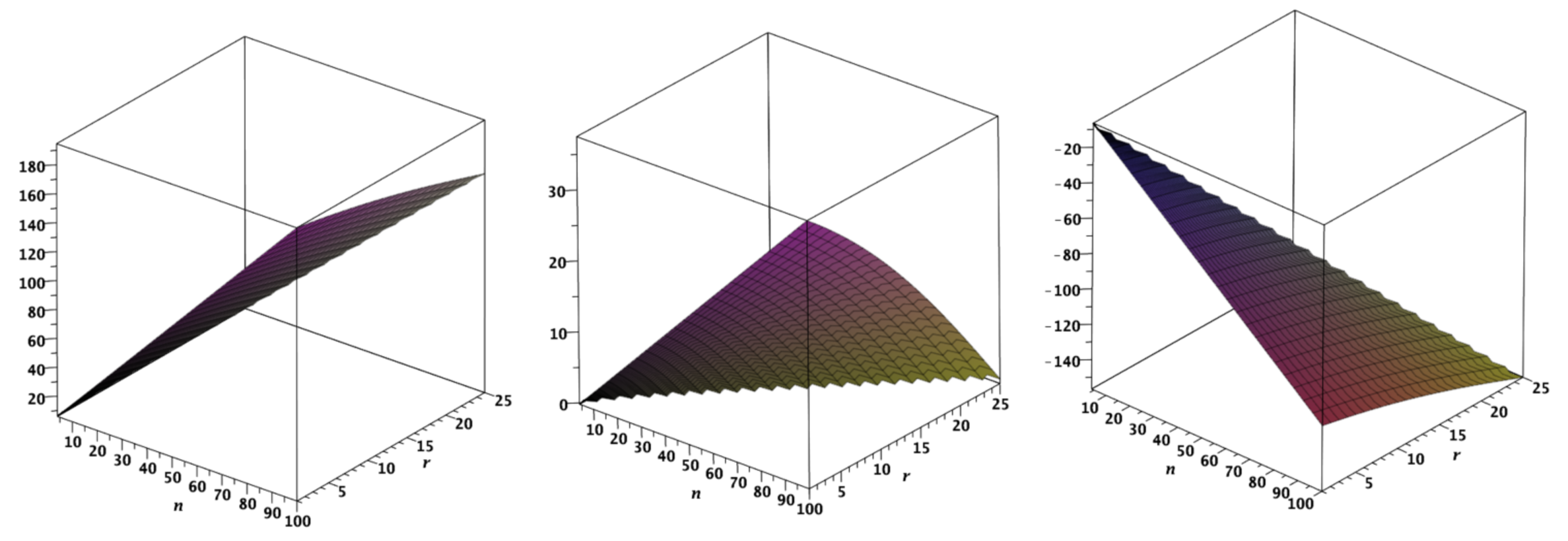}
  \caption{From left to right, the 3d plot of $H(n,r,m)$ for $m\in\{-1,0,1\}$:  $H(n,r,-1)>0\,$, $H(n,r,0)>0,\;$ and $H(n,r,1)<0,\;$.}\label{test_m}
\end{figure}
\end{proof}

\subsubsection{The maximum length of alternating signs $({\rm mlas})$}
First we observe that there is no point in defining a minimum length of alternating signs  in the sequence $b_j(r)$  because it is always 2, since from Lemma~\ref{depend   init  cond  to r} we know that $b_1<0$ and $b_2>0$, provided that $n \geq 8$ and $1\leq r \leq \lfloor\frac{n}{4}\rfloor$. Our main concern is to predict the maximum length where the sign alternance happens. We introduce the definition of the maximum length of alternating signs or ${\rm mlas}$, for short.

\begin{defin}\label{def:mlas}
   Given $n \geq 8$ and $1\leq r \leq \lfloor\frac{n}{4}\rfloor$, consider the sequence $\{b_j(r), \, j\geq 1\}$ given by equation \eqref{depend param case 2}, then we define the maximum length of alternating signs of $b_j(r)$ as being
\begin{equation}\label{mcsr definition}
  {\rm mlas}_{r}(n)=
      2k_r+2, \text{ if } b_{2k+1}<0, \text{ for }0\leq k\leq k_r\text{ and } b_{2(k_{r}+1)+1}>0.
\end{equation}
\end{defin}
We notice that, from Theorem~\ref{thm:old_mcs_r}, ${\rm mlas}_{r}(n)=2k_{0}+2<\infty$ is well defined and, consequently, $b_{2k+1}<0, b_{2k+2}>0$ for $0 \leq k \leq k_r$, represents exactly the maximum index $j$ having only pairs -,+ in the sequence $b_{j}$.

\begin{exemplo}\label{ex:n=19_r=2}
   Consider the example given by Figure~\ref{solut_r2n19}, $r=2$ and $n=19$. In this case $P \approx 2.06$, the first positive root is $j^{*}\approx 0.68$, $\phi\approx 1.51$, $\omega\approx -0.98$ and $k_{0}=\left\lfloor \frac{1}{P-2}+ \frac{\omega_{r} -\arctan(\cot(\phi))}{\phi(P-2)}\right\rfloor= \lfloor 4.51 \rfloor=4$. This means that we must have alternance (-,+) in the sequence elements until $2k_{0}+2= 10$ and $b_{11}\approx 0.05>0$ should be the first odd index which is positive. Indeed, computing the entire sequence we verify that $$\{b_{j}(2)\}=\{-0.53, 1.99, -0.39,
   2.62, -0.27, 3.73, -0.16, 6.25,
   -0.05, 18.39,  0.05,...\},$$ showing the power of the formula given by Theorem~\ref{thm:old_mcs_r}. In this example ${\rm mlas}_{2}(19)=2k_{0}+2=10$.
\end{exemplo}

Despite the fact that Theorem~\ref{thm:old_mcs_r} provides an exact formula for ${\rm mlas}_{r}(n)$, we would like to have a lower bound for the ${\rm mlas}_{r}(n)$ which allows one to have some idea of the size of this number in a general situation without computing $k_{0}$. The next result is remarkable and is obtained from estimates of the trigonometric functions involved in the formula for $k_{0}$.

\begin{thm}\label{mcsr theorem}
  Consider $n \geq 8$ and $1\leq r \leq \lfloor\frac{n}{4}\rfloor$, then a lower bound for the ${\rm mlas}_{r}(n)$ is
$${\rm mlas}_{r}(n) \geq \max\left\{2 \left\lfloor \frac{\pi}{8}(n-2) \right\rfloor - 4(r-1),\;2 \right\}.$$
\end{thm}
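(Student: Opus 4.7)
The plan is to derive the claimed lower bound by sharpening the exact formula of Theorem~\ref{thm:old_mcs_r} with elementary trigonometric estimates. Set $\eta:=\pi/2-\phi$, so $\eta=\arctan(1/\sqrt{n^{2}-1})$, $P-2=2\eta/\phi$, and $\arctan(\cot\phi)=\eta$. Substituting into Theorem~\ref{thm:old_mcs_r} and noting that \eqref{form wr} gives directly $\omega_{r}=\arctan((1-nb_{1})/\sqrt{n^{2}-1})-\phi$, the formula collapses to
$$k_{0}=\left\lfloor\frac{\arctan(v)}{2\eta}-\frac{1}{2}\right\rfloor,\qquad v:=\frac{1-nb_{1}}{\sqrt{n^{2}-1}}.$$
Thus, since ${\rm mlas}_{r}(n)=2k_{0}+2$, the theorem reduces to proving $k_{0}\geq\lfloor\pi(n-2)/8\rfloor-2(r-1)-1$, with the trivial bound $k_{0}\geq 0$ (giving ${\rm mlas}_{r}(n)\geq 2$) always available from Lemma~\ref{depend   init  cond  to r} and the recurrence; this is why the $\max\{\cdot,\,2\}$ appears.

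To control $\arctan(v)/(2\eta)$, I would first verify that $v\in(0,1]$: positivity follows from $b_{1}<0$ (Lemma~\ref{depend   init  cond  to r}), and $v\leq 1$ follows from the explicit form $b_{1}=-(n-2)/n+4r(n-1)/(n^{2}+2n-4)$ combined with $r\leq\lfloor n/4\rfloor$. Then I apply two trigonometric inequalities: (i) $\arctan(u)\leq u$ for $u\geq 0$, which yields $\eta\leq 1/\sqrt{n^{2}-1}$ and hence $1/(2\eta)\geq\sqrt{n^{2}-1}/2$; and (ii) the refined lower bound $\arctan(v)\geq\pi/4-(1-v)/(1+v)$ for $v\in[0,1]$, obtained by applying (i) to $(1-v)/(1+v)\in[0,1]$ inside the identity $\arctan(v)=\pi/4-\arctan((1-v)/(1+v))$ (a specialisation of the difference-of-arctangents formula at $1$). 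Combining,
$$\frac{\arctan(v)}{2\eta}\geq\left(\frac{\pi}{4}-\frac{1-v}{1+v}\right)\frac{\sqrt{n^{2}-1}}{2},$$
and using the identity $\sqrt{n^{2}-1}(1\mp v)=\sqrt{n^{2}-1}\mp(1-nb_{1})$ together with the explicit formula for $b_{1}$, the right-hand side becomes a closed algebraic expression in $n$ and $r$.

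The proof concludes by verifying directly that this lower bound is at least $\lfloor\pi(n-2)/8\rfloor-2(r-1)-1$ in the regime $1\leq r\leq\lfloor n/4\rfloor$, $n\geq 8$; whenever this quantity is nonpositive (large $r$), one invokes instead the trivial bound ${\rm mlas}_{r}(n)\geq 2$. The main obstacle is precisely this final comparison: the bound is essentially tight at $r=1$ for large $n$, so the refined estimate (ii) is essential---the naive chord $\arctan(v)\geq v\pi/4$ is insufficient when $v$ is close to $1$---and one must carefully track the floor functions on both sides, in particular the fractional part of $\pi(n-2)/8$.
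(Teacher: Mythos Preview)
Your approach is genuinely different from the paper's, and in an important way: the paper does not give an analytical proof of this theorem at all. Its ``proof'' consists of plotting $F_{1}(n,r)=2\bigl(\tfrac{1}{P-2}+\tfrac{\omega_{r}-\arctan(\cot\phi)}{\phi(P-2)}\bigr)+2$ against $F_{2}(n,r)=2\lfloor\pi(n-2)/8\rfloor-4(r-1)$ over the region $\mathcal{A}$ and observing graphically that $F_{1}>F_{2}$. Your reduction, by contrast, is a real argument: the substitution $\eta=\pi/2-\phi$ collapsing Theorem~\ref{thm:old_mcs_r} to $k_{0}=\lfloor\arctan(v)/(2\eta)-1/2\rfloor$ is correct, as are the two trigonometric estimates and the claim $v\in(0,1]$ (the maximum of $v$ over $r$ occurs at $r=1$, where $(n-1)(n^{2}-2n-4)^{2}\le (n+1)(n^{2}+2n-4)^{2}$ is elementary). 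You are also right that the naive chord $\arctan(v)\ge v\pi/4$ is too weak near $v=1$ and that the tangent-line estimate $\arctan(v)\ge\pi/4-(1-v)/(1+v)$ is exactly what is needed.

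The one place to be careful is the final comparison, which you correctly flag as the obstacle. Working it out, for $r=1$ and large $n$ your lower bound gives $L\approx\pi(n-2)/8-(5/4-\pi/4)$, while the target is $\lfloor\pi(n-2)/8\rfloor-1/2$; the asymptotic slack is only $1/2-(5/4-\pi/4)\approx 0.035$, uniformly positive but with $O(1/n)$ corrections still to be controlled. So your scheme should close, but the endgame requires either a finite check for small $n$ together with a clean asymptotic bound, or a slightly sharper estimate on $\eta$ (e.g.\ $\eta\ge 1/n$ in addition to $\eta\le 1/\sqrt{n^{2}-1}$) to absorb the lower-order terms. In short: your route is more honest than the paper's graphical verification, and it is essentially complete modulo bookkeeping that is tight but tractable.
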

\begin{proof}
   We recall that, ${\rm mlas}_{r}(n)=2 k_0 +2 \geq 2$ and  $k_{0}=\left\lfloor \frac{1}{P-2}+ \frac{\omega_{r} -\arctan(\cot(\phi))}{\phi(P-2)}\right\rfloor$. Plotting the functions $F_1:(n,r) \to 2\left(\frac{1}{P-2}+ \frac{\omega_{r} -\arctan(\cot(\phi))}{\phi(P-2)}\right)+2$ (in red) and $F_2:(n,r) \to 2 \left\lfloor \frac{\pi}{8}(n-2) \right\rfloor - 4(r-1)$ (blue) in the domain $\mathcal{A}:=\{(n,r)\;|\; n \geq 8, \; 1\leq r \leq \lfloor\frac{n}{4}\rfloor \}$ one may see that $F_1(n,r)>F_2(n,r)$, We remark that the approximation is much sharper when $r \to 1$.
\begin{figure}[H]
  \centering
  \includegraphics[width=12cm]{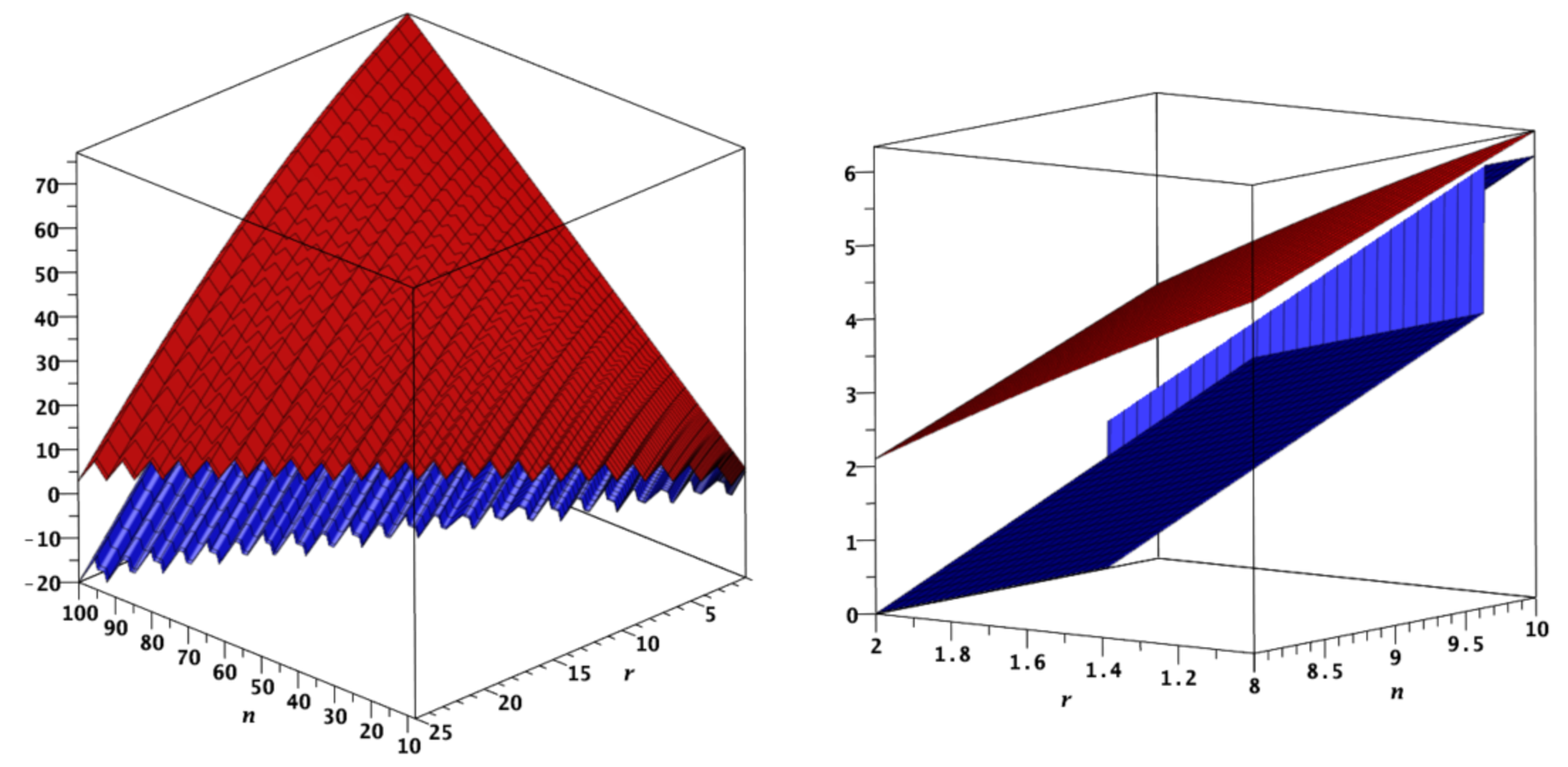}
  \caption{Comparing  ${\rm mlas}_{r}(n)=2 k_0 +2$ and $F_2(n,r)$ for $8 \leq n \leq 100$ (left) and a zoom for $8 \leq n \leq 10$ (right), showing the inequality from a different angle.}\label{fig:test_mlas}
\end{figure}
\end{proof}

\subsubsection{Estimating ${\rm mlas}_{1}(n)$}
A very special case is when $r=1$ because we recover information on the original sequence $x_j$ obtained by using the J-T algorithm applied to an actual pendant path of a given tree. Recall that, for a fixed $r$ we have $b_{1}(r)=x_{1} + r\left(1- \frac{1}{x_{2}}\right)$ and  $b_{j+1}= \frac{2}{n} -\frac{1}{b_{j}}$  where
$x_{1}=1-d=-1+\frac{2}{n}<0 $ and $x_{2}=2-d -\frac{1}{x_{1}}=\frac{2}{n}-\frac{1}{x_{1}}>1$.
Hence $b_{1}(1)=x_{1} + \left(1- \frac{1}{x_{2}}\right)=x_{3}$, analogously~\footnote{It is also possible to consider $r=0$, in this case $b_{j}(0)=x_{j}, \; j\geq1$ because $b_{1}(0)=x_{1} + 0\left(1- \frac{1}{x_{2}}\right)=x_{1}$ and the recurrence is the same}, $b_{j}(1)=x_{j+2}, \; j\geq1$. As the sequence $x_j$ starts with alternate signs (see Figure~\ref{fig:mlas}), we conclude that the maximum length of alternating signs for $x_j$, denoted ${\rm mlas}(n)$ satisfy
\begin{equation}\label{eq:mlas_xj}
  {\rm mlas}(n):={\rm mlas}_{1}(n)+2.
\end{equation}
\begin{figure}[H]
  \centering
  \includegraphics[width=11cm]{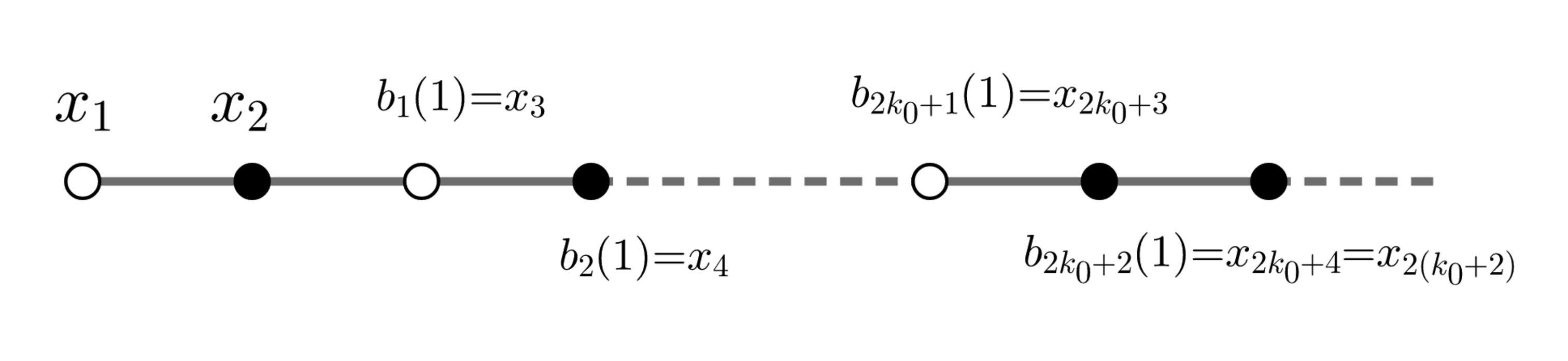}
  \caption{Case $r=1$.}\label{fig:mlas}
\end{figure}
We notice that, from the proof of Theorem~\ref{mcsr theorem}, the estimate for $r=1$ is quite sharp. In particular, for $r=1$ we have, for  $x_{j}$ that
$${\rm mlas}(n) \geq 2 \left\lfloor \frac{\pi}{8}(n-2) \right\rfloor + 2 \approx \frac{3}{4} (n-1).$$
Roughly speaking, if the length of a path is smaller than 75\% of the graph order, we always alternate -,+ which allows us to conclude that 50\% of the eigenvalues associated with the vertices in this path are above/under the average of the Laplacian eigenvalues!

\begin{obs} \label{obs:compare mlas and mlasr}
We would like to conclude this section by pointing out the role of the discounting term $-4(r-1)$ in Theorem~\ref{mcsr theorem}. As the table below illustrates, we can see that:
\begin{itemize}
  \item ${\rm mlas}_{1}(n)$ is the largest possible interval of alternating signs;
  \item ${\rm mlas}_{r}(n)$ actually decreases as a function of $r$;
  \item The lower bound given by Theorem~\ref{mcsr theorem} is sharp for $r=1$ and looses accuracy as $r$ increases. In fact, each time $r$ increases by 1, the estimated value decreases by approximately 4, while the actual values may decrease by 2. It can even be negative, and in such case it is meaningless, because we always have ${\rm mlas}_{r}(n) \geq 2$.
\end{itemize}
In this table we consider $n=183$, a fairly large number, and compare ${\rm mlas}_{r}(183)$ for increasing values of $r$ exhibiting the lower bound and the elements where the sign change. We recall that we must consider only $r\leq \left\lfloor \frac{183}{4} \right\rfloor=45$.
\begin{center}
\begin{tabular}{|c|c|c|c|c|c|}
\hline
      & $_{{\rm mlas}_{r}(183)=2k_{0}+2}$ & $_{2 \left\lfloor \frac{\pi}{8}(n-2) \right\rfloor - 4(r-1)}$ & $b_{2k_{0}+2}$ & $b_{2k_{0}+3}$ \\
      \hline
  r=1 & 142 & 142 & 805.62903 & 0.0096876957  \\
  r=2 & 140 & 138 & 996.23786 & 0.0099251854  \\
  \vdots & \vdots & \vdots  & \vdots  & \vdots   \\
  r=25 & 78 & 46 & 128.14968 & 0.0031255870  \\
  r=26 & 76 & 42 & 1255.7188 & 0.010132605  \\
  r=27&  72&  38&  225.69361&  0.0064999950 \\
  r=28&  68&  34&  128.43487&  0.0031447334 \\
  r=29&  64&  30&  91.942550&  0.0000494238 \\
  r=30&  62&  26&  361.13179&  0.0081597084 \\
  \vdots & \vdots & \vdots  & \vdots  & \vdots   \\
  r=44 & 8 & -30 & 100.17364 & 0.00094629571  \\
  r=45 & 4 & -34 & 96.474616 & 0.00056354082  \\
  \hline
\end{tabular}
\end{center}
As predicted ${\rm mlas}_{r}(183)$ decreases when $r$ increases, sometimes by 2 and, in the worst case 4.  When the pendant star becomes heavier, we loose the sign alternance sooner.
\end{obs}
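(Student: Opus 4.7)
\medskip

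The Remark bundles together four distinct assertions: $(i)$ that $\mathrm{mlas}_r(n)$ is monotone decreasing in $r$ so that $\mathrm{mlas}_1(n)$ is the largest, $(ii)$ that the bound from Theorem~\ref{mcsr theorem} is essentially tight at $r=1$, $(iii)$ that the bound deteriorates like $-4$ per unit increment of $r$ while the true value decreases only by about $-2$, and $(iv)$ that once the bound drops below $2$ it becomes vacuous because $\mathrm{mlas}_r(n)\ge 2$ always holds. My plan is to treat each of these analytically, leveraging the exact formula from Theorem~\ref{thm:old_mcs_r} and the derivative computation already carried out in Section~\ref{sec:param} for $\omega_r$.

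For $(i)$ and $(ii)$, I would start from
\[
\mathrm{mlas}_r(n) = 2\left\lfloor \frac{1}{P-2} + \frac{\omega_r - \arctan(\cot \phi)}{\phi(P-2)}\right\rfloor + 2
\]
and observe that $P$, $\phi$ and $\arctan(\cot\phi)$ depend only on $n$, so the sole dependence on $r$ sits in $\omega_r$ and enters via a term with positive coefficient $1/(\phi(P-2))$. Since the excerpt already established $d\omega_r/dr<0$ on the valid range $1\le r\le\lfloor n/4\rfloor$, the argument of the floor is strictly decreasing in $r$, hence $k_0(r)$ is weakly decreasing, which gives both the monotonicity in $(ii)$ and immediately implies in $(i)$ that $\mathrm{mlas}_1(n)\ge\mathrm{mlas}_r(n)$ for every admissible $r$. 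The minimal threshold $\mathrm{mlas}_r(n)\ge 2$ claimed in $(iv)$ is built into Definition~\ref{def:mlas} (the sequence always starts with one pair $-,+$), so as soon as $2\lfloor \pi(n-2)/8\rfloor-4(r-1)<2$ the maximum with $2$ in Theorem~\ref{mcsr theorem} takes over and the bound conveys no information; I would simply record this explicitly.

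For $(iii)$ I would compute both derivatives with respect to $r$ and compare them. The bound $F_2(n,r)=2\lfloor \pi(n-2)/8\rfloor-4(r-1)$ is affine in $r$ with slope exactly $-4$; the continuous envelope of $\mathrm{mlas}_r(n)$ satisfies
\[
\frac{d}{dr}\!\left(\frac{2(\omega_r-\arctan(\cot\phi))}{\phi(P-2)}\right)
= \frac{2}{\phi(P-2)}\cdot\frac{d\omega_r}{dr}.
\]
Using $\phi=\arctan\sqrt{n^2-1}=\pi/2-\arctan(1/\sqrt{n^2-1})$ one gets the asymptotic $\phi(P-2)\sim 2/\sqrt{n^2-1}$ for large $n$; the explicit expression for $d\omega_r/dr$ displayed in Section~\ref{sec:param} has leading behaviour $-2/\sqrt{n^2-1}$ (the numerator $-2\sqrt{n^2-1}(n^2+2n-4)$ and the dominant denominator $(n^2+2n-4)^2$ cancel to order $n^{-1}$). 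Substituting yields continuous slope $\approx -2$, so the true $\mathrm{mlas}_r(n)$ decreases at roughly half the rate of the bound, accounting for the systematic gap widening in the table; for $r=1$ I would then compare the leading terms and check that the approximations in Theorem~\ref{mcsr theorem} are exact up to $O(1)$, recovering sharpness.

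The main obstacle is not in any single step but in controlling two different discretizations simultaneously: the floor in $\mathrm{mlas}_r(n)$ and the floor inside $F_2$. The continuous slopes give the average behaviour ($-4$ against $-2$), but for a given $r$ one must argue that the flooring error stays bounded so that the observed gap equals $2(r-1)$ up to $O(1)$. I would handle this by bounding the fractional parts of both expressions and tracking parity: since the continuous gap grows linearly in $r$ at rate $2$, the actual integer gap is within $\pm 2$ of $2(r-1)$, which is exactly the pattern documented in the table and validates the informal assertion ``may decrease by $2$''.
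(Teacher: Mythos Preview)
The paper does not actually prove this Remark: it is presented as commentary, supported entirely by the numerical table for $n=183$. There is no accompanying argument in the text beyond the computations already done for $\omega_r$ and the statement of Theorem~\ref{mcsr theorem}. So there is no ``paper's proof'' to compare against; your proposal goes well beyond what the authors attempt.

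That said, your analytical sketch is sound on the main points. The monotonicity in $r$ does follow immediately from $d\omega_r/dr<0$, and your asymptotic computation of the continuous slope is correct: for $r$ bounded and $n$ large one has $\phi(P-2)=\pi-2\phi\sim 2/\sqrt{n^2-1}$ while $d\omega_r/dr\sim -2/\sqrt{n^2-1}$, giving slope $\approx -2$ against the bound's exact slope $-4$. One caveat worth noting is that this asymptotic is only valid for $r$ small relative to $n$; when $r$ is a fixed fraction of $n/4$ the quadratic term $8n(n-1)r^2$ in the denominator of $d\omega_r/dr$ becomes comparable to $(n^2+2n-4)^2$, the slope steepens, and this is precisely why the table shows drops of $4$ rather than $2$ for $r\in\{27,28,29\}$ and near $r=45$. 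Your final paragraph acknowledges the floor issues but does not quite capture this regime change. Likewise, the sharpness at $r=1$ (the exact equality $142=142$ in the table) would require a more careful matching of constants than ``exact up to $O(1)$''; the paper itself only asserts sharpness on the basis of Figure~\ref{fig:test_mlas} and the table, not via an inequality.
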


\subsection{Proper transformations and the proof of a conjecture}
\label{sub:proper}
In 2011 \cite{Tre2011} it has been conjectured that the number of eigenvalues of any tree (with $n$ vertices) smaller than the average degree $d=2- \frac{2}{n}$ is at least $ \lceil \frac{n}{2}\rceil$.  The key element of its recent proof \cite{Jacobs2021} is the concept of proper transformation of a graph. Namely, a proper transformation changes a graph $T$, producing a new graph $T'$, preserving the number $n$ of vertices in such way that the number of eigenvalues above the average degree, denoted by $\sigma(T)$, does not decrease (but could stay the same), that is, $\sigma(T)\leq \sigma(T')$. The main idea in the paper is that, given a tree $T$ we can find a sequence of proper transformations changing $T$ in to a new tree satisfying the conjecture,  and consequently, $T$ will satisfy the conjecture as well. The success of this strategy relies on a careful choice of a few proper transformations that change a given tree $T$ by a finite number of steps into one of four prototype trees $T_0$, $T_1$, $T_2$ and $T_3$ given in Figure \ref{fig:prototype}. It is not a coincidence the fact that the number of prototypes is four.  Indeed, it is a fortuitous consequence of Lemma~\ref{depend   init  cond  to r}, which states  that for $n \geq 8$, if $0 \leq r \leq \lfloor\frac{n}{4}\rfloor$ then $b_{1}(r)<0$, and consequently $b_{2}(r)>0$. As such, a generalized pendant path, i.e. $r$ paths $P_2$ ($2r$ vertices) attached to a path $P_2$ (as Figure~\ref{newseq_r} illustrates), has exactly half of the Laplacian eigenvalues below the average degree (negative vertices) and the other half of the Laplacian eigenvalues above the average degree (positive vertices).
Eventually, we arrive at a tree having two generalized pendant paths with the largest possible $r$ which is $r=\lfloor\frac{n}{4}\rfloor$ or $r=\lfloor\frac{n}{4}\rfloor-1$ we obtain the control of exactly $4r=4 \lfloor\frac{n}{4}\rfloor$ vertices of $n$. The number of remaining vertices to be analyzed are 0,1,2 or 3, according to the congruence of $n \mod 4$. Those are exactly the prototypes trees.
\begin{figure}[H]
  \centering
  \includegraphics[width=12cm]{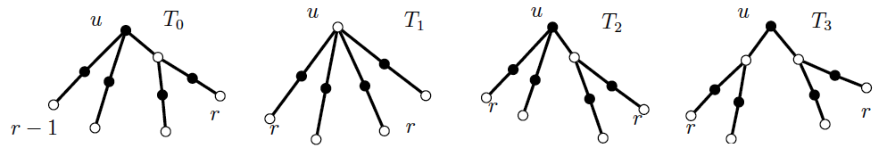}
  \caption{Prototype trees $T_0$, $T_1$, $T_2$ and $T_3$. From \cite{Jacobs2021}, each $T_\alpha, \alpha \in \{0, 1, 2, 3\}$ satisfy $\displaystyle\sigma(T_\alpha) = \left\lfloor\frac{n}{2}\right\rfloor$.} \label{fig:prototype}
\end{figure}

\begin{exemplo}\label{ex:star-up}
In order to obtain one of the prototype trees, one of the goals is to increase the number of $P_2$'s attached to a vertex. Several of these transformations are described and proved to be proper in \cite{Jacobs2021}. As an example we recall the Star-up transform, in which a path is shorten by two vertices, increasing the number of $P_2$'s by one.
   \begin{prop}[\cite{Jacobs2021}, Star-up transform]\label{star-up} Let $u$ be a vertex that is not a leaf of a tree $T$ with $n \geq 8$ vertices. If $u$  has a path  $P_q, ~~ q\geq 2$ connecting $u$ to a starlike vertex that has exactly $0 \leq r \leq \lfloor  \frac{n}{4}\rfloor -1$ pendant $P_2$, and no other pendant path, then the transformation in Figure~\ref{Star-up:fig}~is proper.
\begin{figure}[H]
  \centering
  \definecolor{cqcqcq}{rgb}{0.7529411764705882,0.7529411764705882,0.7529411764705882}
\definecolor{dtsfsf}{rgb}{0.8274509803921568,0.1843137254901961,0.1843137254901961}
\definecolor{ffffff}{rgb}{1,1,1}
\begin{tikzpicture}[line cap=round,line join=round,>=triangle 45,x=1cm,y=1cm, scale=1, every node/.style={scale=0.8}]
\clip(-8.3,-1.22) rectangle (9.64,4.2);
\draw [line width=1pt,color=dtsfsf] (-1.636137486647466,2.074186303595756)-- (-1.16,1.5);
\draw [line width=1pt] (-1.16,1.5)-- (-1.452599533144194,0.6482375879164936);
\draw [line width=1pt] (-1.452599533144194,0.6482375879164936)-- (-1.74,-0.2);
\draw [line width=1pt] (-1.16,1.5)-- (-0.9725771932125596,0.507054546760131);
\draw [line width=1pt] (-0.9725771932125596,0.507054546760131)-- (-0.8607602246167193,-0.3130777393171791);
\draw [line width=1pt] (-1.16,1.5)-- (-0.4643182450496526,0.77530232495722);
\draw [line width=1pt] (-0.4643182450496526,0.77530232495722)-- (0.1,0.18);
\draw (-0.42,0.1) node[anchor=north west] {{\tiny $r$}};
\draw (-2.12,3.5) node[anchor=north west] {{\tiny $u$}};
\draw [line width=1pt,color=dtsfsf] (-1.636137486647466,2.074186303595756)-- (-2.2008696512729182,2.7236282929150244);
\draw [line width=1pt] (-2.2008696512729182,2.7236282929150244)-- (-2.7938384241296434,3.4295434986968374);
\draw (-1.94,3.1) node[anchor=north west] {{\tiny $P_2$}};
\draw (3.14,3.78) node[anchor=north west] {{\tiny $P_{q-2}$}};
\draw [line width=1pt] (4.3562472196987985,2.0720898207028045)-- (4.027420209340036,1.2150874981592914);
\draw [line width=1pt] (4.027420209340036,1.2150874981592914)-- (3.702699214680401,0.4385807717992969);
\draw [line width=1pt] (4.3562472196987985,2.0720898207028045)-- (4.54979746161858,1.03154954465602);
\draw [line width=1pt] (4.54979746161858,1.03154954465602)-- (4.634507286312397,0.2550428182960255);
\draw [line width=1pt] (4.3562472196987985,2.0720898207028045)-- (5.050008976435578,1.3610707627149674);
\draw [line width=1pt] (5.050008976435578,1.3610707627149674)-- (5.616247219698799,0.7520898207028046);
\draw [line width=1pt,color=dtsfsf] (4.3562472196987985,2.0720898207028045)-- (5.3575056400741365,1.8735652021125633);
\draw [line width=1pt,color=dtsfsf] (5.3575056400741365,1.8735652021125633)-- (6.187520739032391,1.7092281422065607);
\draw (5.1,0.68) node[anchor=north west] {{\tiny $r+1$}};
\draw (4.42,2.96) node[anchor=north west] {{\tiny $u$}};
\draw [line width=1pt] (4.3562472196987985,2.0720898207028045)-- (3.7346065819817427,2.7277226011085567);
\draw [->,line width=1pt] (0.3,2.34) -- (2,2.36);
\draw [line width=1pt] (3.7346065819817427,2.7277226011085567)-- (3.068363810764865,3.4281316682852707);
\draw [line width=1pt] (3.068363810764865,3.4281316682852707)-- (2.4192041875279076,4.077291291522226);
\draw [line width=1pt] (-2.7938384241296434,3.4295434986968374)-- (-3.372688892870732,4.107222096247378);
\draw (-3.72,3.5) node[anchor=north west] {{\tiny $P_q$}};
\begin{scriptsize}
\draw [fill=black] (-1.636137486647466,2.074186303595756) circle (2.5pt);
\draw [fill=ffffff] (-1.16,1.5) circle (2.5pt);
\draw [fill=black] (-1.452599533144194,0.6482375879164936) circle (2.5pt);
\draw [fill=ffffff] (-1.74,-0.2) circle (2.5pt);
\draw [fill=black] (-0.9725771932125596,0.507054546760131) circle (2.5pt);
\draw [fill=ffffff] (-0.8607602246167193,-0.3130777393171791) circle (2.5pt);
\draw [fill=black] (-0.4643182450496526,0.77530232495722) circle (2.5pt);
\draw [fill=ffffff] (0.1,0.18) circle (2.5pt);
\draw [fill=cqcqcq] (-2.2008696512729182,2.7236282929150244) circle (2.5pt);
\draw [fill=cqcqcq] (-2.7938384241296434,3.4295434986968374) circle (2.5pt);
\draw [fill=cqcqcq] (4.3562472196987985,2.0720898207028045) circle (2.5pt);
\draw [fill=black] (4.027420209340036,1.2150874981592914) circle (2.5pt);
\draw [fill=ffffff] (3.702699214680401,0.4385807717992969) circle (2.5pt);
\draw [fill=black] (4.54979746161858,1.03154954465602) circle (2.5pt);
\draw [fill=ffffff] (4.634507286312397,0.2550428182960255) circle (2.5pt);
\draw [fill=black] (5.050008976435578,1.3610707627149674) circle (2.5pt);
\draw [fill=ffffff] (5.616247219698799,0.7520898207028046) circle (2.5pt);
\draw [fill=black] (5.3575056400741365,1.8735652021125633) circle (2.5pt);
\draw [fill=ffffff] (6.187520739032391,1.7092281422065607) circle (2.5pt);
\draw [fill=cqcqcq] (3.7346065819817427,2.7277226011085567) circle (2.5pt);
\draw [fill=cqcqcq] (3.068363810764865,3.4281316682852707) circle (2.5pt);
\end{scriptsize}
\end{tikzpicture}
\caption{Star-up transform increasing the number of $P_2$ attached to $u$. Black vertices are positive and white vertices are negative.}\label{Star-up:fig}
\end{figure}
\end{prop}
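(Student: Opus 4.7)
The plan is to run the J-T algorithm with $\alpha = d = 2 - \frac{2}{n}$ on both $T$ and $T'$ and compare the number of positive diagonal entries produced. Since $|V(T)| = |V(T')| = n$, both computations use the same $\alpha$, and the two trees agree outside the subtree hanging from $u$ toward the starlike vertex $s$. The computation splits into three regions: (i) the $r$ (resp.\ $r+1$) pendant copies of $P_2$ attached to $s$; (ii) the vertices of the path joining $s$ to $u$, of length $q$ in $T$ and $q-2$ in $T'$; and (iii) the vertex $u$ together with everything above $u$, which is structurally identical in the two trees.

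Region (i) is immediate: each pendant $P_2$ produces a negative value $x_1 = -1 + \frac{2}{n}$ at its leaf and a positive value $x_2 = \frac{2}{n} - \frac{1}{x_1} > 1$ at its inner vertex, so $T'$ already carries exactly one more positive entry than $T$ in this region. For region (ii), the value arriving at $s$ is $b_1(r)$ in $T$ and $b_1(r+1)$ in $T'$, both negative by Lemma~\ref{depend   init  cond  to r} since $r+1 \leq \lfloor n/4 \rfloor$. Iterating the recursion $b_{j+1} = \frac{2}{n} - \frac{1}{b_j}$ along the path and invoking Theorem~\ref{mcsr theorem} to guarantee that the alternating-sign window ${\rm mlas}_r(n)$ covers the path, a direct parity count shows that the longer path in $T$ produces exactly one more positive value than the shorter path in $T'$. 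Combining (i) and (ii), the positive counts at vertices strictly below $u$ agree in $T$ and $T'$.

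The main obstacle is region (iii): the value delivered to $u$ is $v_T = b_{q-1}(r)$ in $T$ but $v_{T'} = b_{q-3}(r+1)$ in $T'$, and these are not equal. Both carry the same sign (again by the alternating regime), so what remains is to control their magnitudes well enough that the sign of $u$'s diagonal, and hence the positive count at and above $u$, does not decrease when passing from $T$ to $T'$. My plan is to prove a comparison lemma for the family $\{b_j(r)\}$: the map $r \mapsto b_1(r)$ is linear and strictly increasing in $r$, while the recursion $\varphi(t) = \frac{2}{n} - \frac{1}{t}$ preserves order on each branch of its domain. Iterating these two monotonicities, one should obtain that $b_{q-3}(r+1)$ and $b_{q-1}(r)$ lie on the same side of the attracting locus of $\varphi$, with $b_{q-3}(r+1)$ closer to it. Taking reciprocals then yields an inequality between $-1/v_{T'}$ and $-1/v_T$ in the direction that can only increase the positive count above $u$.

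If this comparison lemma fails in corner configurations, notably when $q$ is large enough to exhaust the alternating window so that Theorem~\ref{mcsr theorem} no longer applies, I would fall back on the explicit Type~3 solution from Theorem~\ref{sol general rational difference equation}, writing $b_j$ as the shifted tangent $\rho(\cos\phi - \sin\phi\,\tan(j\phi+\omega_r))$ and treating the remaining cases by a finite case analysis modulo the period $P(n) = \pi/\arctan(\sqrt{n^2-1})$. This analytic ``continuous extension'' viewpoint, emphasized throughout Section~\ref{sec:prop}, is what I expect to be the decisive ingredient that closes out $\sigma(T) \leq \sigma(T')$ in full generality.
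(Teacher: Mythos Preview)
First, a clarification: the paper does not prove Proposition~\ref{star-up}; it is quoted from \cite{Jacobs2021} inside Example~\ref{ex:star-up} purely as an illustration of the proper-transformation machinery. There is no in-paper proof to compare against, so your attempt must stand on its own.

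There is a genuine conceptual error in your region~(iii) argument. You speak of ``the attracting locus of $\varphi$'' and of iterates being ``closer to it'', but for $\alpha = 2/n$ and $\gamma = -1$ the discriminant is $\Delta = 4/n^2 - 4 < 0$, so by Theorem~\ref{sol general rational difference equation} this is a Type~3 recursion: $\varphi$ has \emph{no real fixed point at all}, and the extended solution is the quasi-periodic tangent-shaped function of Example~\ref{exemplo Laplacian average}, not a contraction toward anything. Any argument resting on approach to an attractor is therefore empty here. Moreover, the comparison you actually need is between $b_{q-1}(r)$ and $b_{q-3}(r+1)$, which differ in \emph{both} the index and the parameter; monotonicity of $r\mapsto b_1(r)$ together with branchwise monotonicity of $\varphi$ only gives $b_j(r)<b_j(r+1)$ at equal $j$ while both orbits stay on the same side of $0$, and says nothing about a two-step shift in $j$.

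There is also a structural gap upstream. In region~(ii) you invoke Theorem~\ref{mcsr theorem} to force sign alternation along the whole path, but the hypothesis of the proposition only assumes $q\ge 2$ with no upper bound. Once $q$ exceeds ${\rm mlas}_r(n)$ the alternating pattern breaks, your parity count is invalid, and even the signs of $v_T,v_{T'}$ are undetermined. Your proposed fallback (``finite case analysis modulo the period'') cannot close this, because even perfect knowledge of $v_T$ and $v_{T'}$ is not enough: the subtree above $u$ is \emph{arbitrary}, and two different incoming values at $u$ can flip signs arbitrarily far up in that subtree. Nothing in your outline controls this propagation. The proof in \cite{Jacobs2021} must handle exactly this point; the analytic tools developed in the present paper (the explicit Type~3 formula and the ${\rm mlas}$ bounds) are designed to track signs along a single path of controlled length, not to compare two distinct inputs fed into an unknown subtree.
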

\end{exemplo}

\begin{coro}\label{double broom 50 percent}
  Consider $T$ a double broom as in the picture below.
\begin{figure}[H]
  \centering
  \includegraphics[width=12cm]{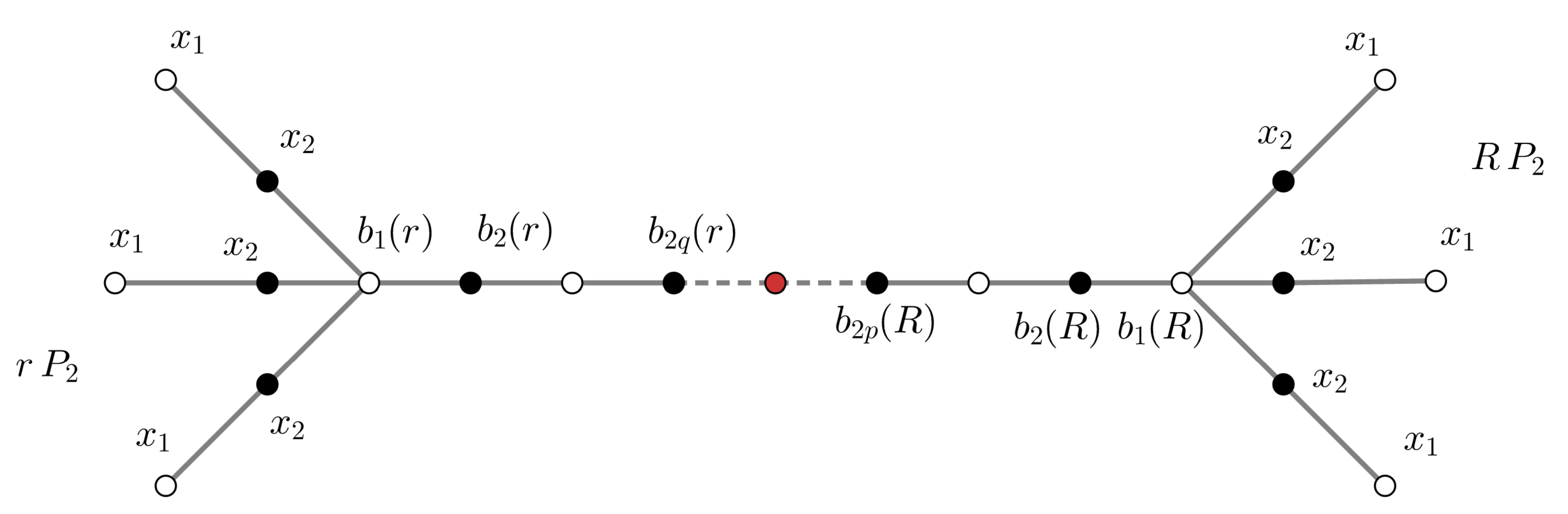}
  \caption{$T$.}\label{broom}
\end{figure}

  If $2q \leq \max\left\{2 \left\lfloor \frac{\pi}{8}(n-2) \right\rfloor - 4(r-1),\;2 \right\}$, $2p \leq \max\left\{2 \left\lfloor \frac{\pi}{8}(n-2) \right\rfloor - 4(R-1),\;2 \right\}$, $r, R < \left\lfloor \frac{n-1}{4} \right\rfloor$ then $\left\lfloor \frac{n}{2} \right\rfloor=\sigma(T) $ or $\left\lfloor \frac{n}{2} \right\rfloor=\sigma(T)+1$.
\end{coro}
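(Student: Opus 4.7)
The strategy is to apply the J-T algorithm to $L(T)-d I$, with $L(T)$ the Laplacian matrix of $T$ and $d=2-\tfrac{2}{n}$; by Sylvester's Law of Inertia, $\sigma(T)$ equals the number of positive final vertex weights produced by the algorithm. The double broom $T$ decomposes naturally into two generalized pendant paths (each of the form of Figure~\ref{newseq_r}) joined by a short connecting segment, and the analysis proceeds by processing each side independently toward a central vertex chosen as root.

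On the left side, each of the $r$ pendant $P_2$'s contributes the pair $(x_1,x_2)$ of Example~\ref{exemplo Laplacian average}, with $x_1<0$ and $x_2>0$, yielding $r$ negatives and $r$ positives. Absorbing these into the left starlike vertex produces the value $b_1(r)$ defined by \eqref{depend param case 2}; the hypothesis $r<\lfloor (n-1)/4\rfloor$ implies $r\leq \lfloor n/4\rfloor$, so Lemma~\ref{depend   init  cond  to r} gives $b_1(r)<0$. Continuing along the connecting path produces $b_2(r),b_3(r),\dots$, and the first $2q$ such values strictly alternate in sign starting from $-$, because
$$
2q \;\leq\; \max\!\left\{2\left\lfloor \tfrac{\pi}{8}(n-2)\right\rfloor-4(r-1),\,2\right\} \;\leq\; {\rm mlas}_r(n)
$$
by Theorem~\ref{mcsr theorem}. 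Hence the left side contributes exactly $r+q$ negative and $r+q$ positive final weights. The symmetric argument on the right side, using the parameters $R$ and $p$, contributes a further $R+p$ negatives and $R+p$ positives.

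All but at most a bounded number of central vertices of the connecting path are thus accounted for in a perfect $-,+$ balance, so the total count satisfies $|\sigma(T)-\lfloor n/2\rfloor|\leq 1$, giving the two admissible values in the conclusion. The main obstacle will be the careful bookkeeping at the meeting point, where the algorithm combines the two $b$-sequences by the standard update rule at the chosen root vertex of the connecting path. One must verify that this combination contributes at most one additional positive weight (and at most one additional negative weight), so that the $-,+$ alternance carried up from each side is not spoiled by more than one unit. A secondary delicate point is reconciling the strict inequality $r,R<\lfloor(n-1)/4\rfloor$ with the non-strict hypothesis of Lemma~\ref{depend   init  cond  to r}, which is needed because the two generalized pendant paths coexist on opposite sides of the root and must both satisfy $b_1(\cdot)<0$ simultaneously.
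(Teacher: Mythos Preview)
Your approach is the paper's approach, and the ingredients you invoke (Lemma~\ref{depend   init  cond  to r} for $b_1(\cdot)<0$, Theorem~\ref{mcsr theorem} for the alternating block of length ${\rm mlas}$) are exactly the right ones. What you are missing is the structural observation that turns your ``main obstacle'' into a triviality: in the double broom of the statement the vertex count is precisely
\[
n \;=\; 2r + 2R + 2q + 2p + 1,
\]
so once the $2r+2q$ left-side vertices and the $2R+2p$ right-side vertices have been processed, a \emph{single} vertex remains, namely the root, whose final weight is $\tfrac{2}{n}-\tfrac{1}{b_{2q}(r)}-\tfrac{1}{b_{2p}(R)}$. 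There is no ``bounded number of central vertices'' to bookkeep; there is exactly one, and its sign alone decides between the two alternatives. This also sharpens your conclusion: the inequality $|\sigma(T)-\lfloor n/2\rfloor|\le 1$ that you derive allows three values, whereas the exact count $r+R+q+p=\lfloor n/2\rfloor$ of already-determined positives plus one undecided root gives exactly two.

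Your secondary worry about the hypothesis $r,R<\lfloor (n-1)/4\rfloor$ is not a real issue: that strict inequality immediately gives $r,R\le\lfloor n/4\rfloor$, which is all Lemma~\ref{depend   init  cond  to r} requires.
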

\begin{proof}
   It is a direct application of Theorem~\ref{mcsr theorem}. From $2q \leq \max\left\{2 \left\lfloor \frac{\pi}{8}(n-2) \right\rfloor - 4(r-1),\;2 \right\}$, $2p \leq \max\left\{2 \left\lfloor \frac{\pi}{8}(n-2) \right\rfloor - 4(R-1),\;2 \right\}$ we conclude that we have exactly $p+q$ positive vertices and $p+q$ negative vertices. The same is true for each broom producing  exactly $r+R$ positive vertices and $r+R$ negative vertices. Choosing the root as in the Figure~\ref{broom} and applying the J-T algorithm to $T$ we conclude that the only unknown values is the root.

   As $n=2r+2R+2q+2p+1$ we obtain $\left\lfloor \frac{n}{2} \right\rfloor=r+ R+ q+ p=\sigma(T)$ or $\sigma(T)+1$, according  to the signal of $\frac{2 }{n} -\frac{1}{b_{2q}(r)}-\frac{1}{b_{2p}(R)}$ is negative or positive.
\end{proof}


\begin{exemplo}\label{ex:broom_starup} We consider a tree $T$ with $n=19$ vertices given in the Figure~\ref{fig:broom_starup}. Notice that the maximum number of $P_2$'s admitted by our results is $r=\left\lfloor\frac{n}{4}\right\rfloor=4$. Computing the ${\rm mlas}_{r}(19)$, through the exact formula \eqref{eq:estima_old_mcs_r}, we obtain ${\rm mlas}_{1}(19)=12$, ${\rm mlas}_{2}(19)=10$, ${\rm mlas}_{3}(19)=8$ and ${\rm mlas}_{4}(19)=4$.  Since the left side of $T$ has $3P_2$ and is connected to $u$ by $4\leq {\rm mlas}_{3}(19)=8$ vertices and, the right side of $T$ has $2P_2$ and is connected to $u$ by $4\leq {\rm mlas}_{2}(19)=10$ vertices, we can use the Corollary~\ref{double broom 50 percent}, concluding that the sign of each vertex is how we depicted in the figure, that is, 9 negative (eigenvalues below the average degree), 9 positive (eigenvalues above the average degree) and $u$ in red is unknown. If the sign of $u$ were positive we will have $\sigma(T)=10>\left\lfloor\frac{9}{2}\right\rfloor=9$, as prescribed in the conjecture proof from \cite{Jacobs2021}.\\
In order to decide the sign of the remaining vertex, we emulate the argument in \cite{Jacobs2021}. Since $19\equiv 3 \mod{4}$, according with \cite{Jacobs2021}, we must be able to transform it, properly, in to the prototype $T_3$. We will do it using only star-up transform from Proposition~\ref{star-up}:
\begin{itemize}
  \item $T \stackrel{star-up}{\rightarrow} T'$, in the right side, using the root in $v$;
  \item $T' \stackrel{star-up}{\rightarrow} T''$, in the left side, using the root in $v$;
  \item $T'' \stackrel{star-up}{\rightarrow} T'''$, in the right side,  using the root in $u$.
\end{itemize}
We observe that $T'''=T_3$ as we claimed.
\begin{figure}[H]
  \centering
  \includegraphics[width=12cm]{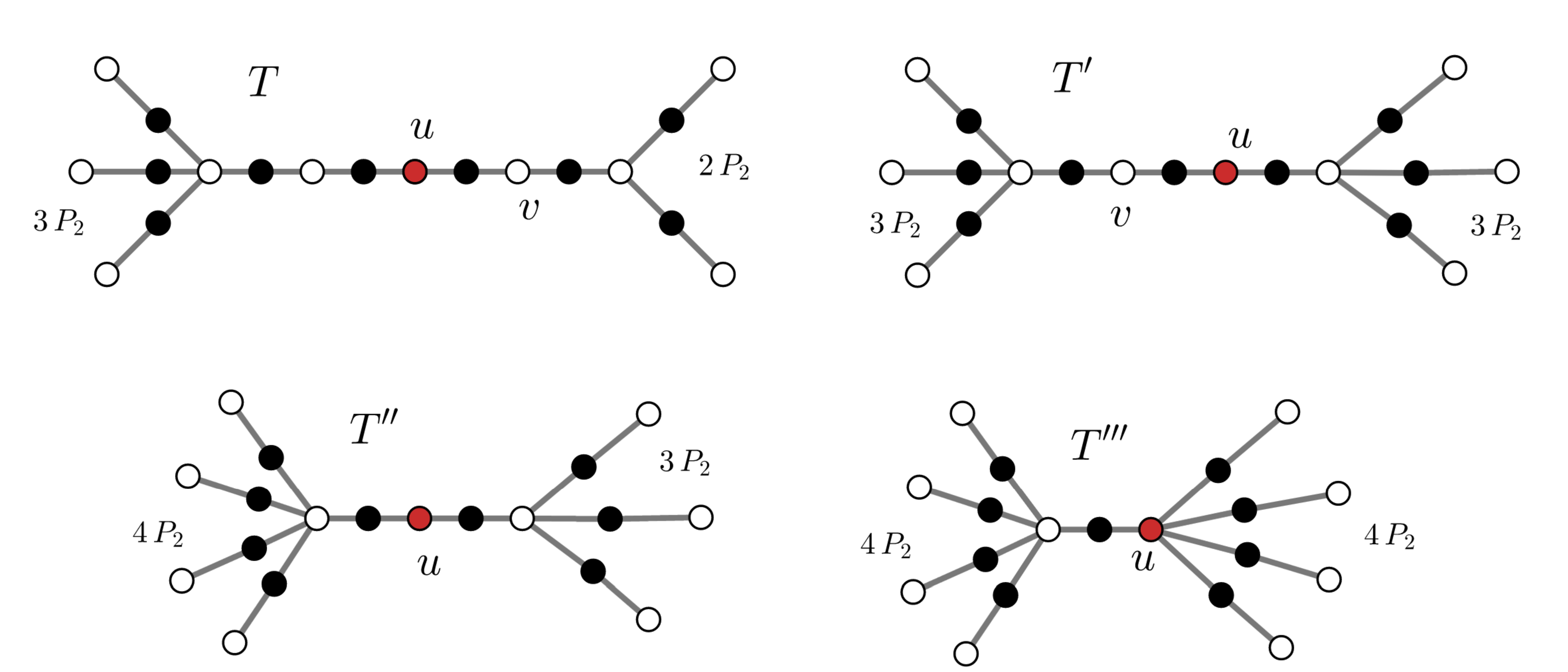}
  \caption{Star-up transform on a broom.} \label{fig:broom_starup}
\end{figure}
Finally, we observe that as the transformations we used are proper we obtain $9 \leq\sigma(T) \leq\sigma(T') \leq\sigma(T'') \leq \sigma(T''')=\sigma(T_3)=9$ concluding that sign of the red vertex, in $T$, is negative. Thus the number of Laplacian eigenvalues below the average degree is $\left\lceil\frac{9}{2}\right\rceil=10$, as expected.
\end{exemplo}

\section{Concluding remarks}\label{sec:conc}
We have studied in this paper a class of recurrence relations given by
$$
  x_{j+1}= \varphi(x_{j}), j \geq 1,
$$
where $\varphi(t)= \alpha + \frac{\gamma}{t}$, for $t\neq 0$,   $\alpha, \gamma \in \mathbb{R}$ are fixed numbers ($\gamma \neq 0$) and $x_{1}$ is a given initial condition. These recurrences appear in the process of diagonalizing certain matrices related to trees. We proposed in this paper an analytical approach to study the behaviour of the solutions of these relations recurrences. This study generalizes some  particular relations that were studied in \cite{Belardo2019,Jacobs2021,oliveira2020spectral,OlivStevTrev}.
The central point about this work is to exhibit a powerful technique which is the passage from natural numbers, representing the vertices of a graph, to arbitrary real numbers, allowing to find solutions of analytical equations which will define intervals of indices where a certain property is true. Through this, we add a new layer of understanding of fine properties of graphs. What makes it possible, is the J-T algorithm, producing rational recurrence formulas which by in turn can be extended to real numbers, whenever we are able to find explicit expressions for them. Our goal is to offer a broad and systematic study of the technical issues involved and show how it works in many important problems of spectral graph theory.
One may argue that trees constitute a particular and small class of graphs. Nevertheless, the number of applications involving trees is sufficient to grant importance to this study. Additionally, it seems that this novel technique has been successful to tackle hard problems where traditional methods have not being able to solve.

It is worth pointing out that the J-T algorithm has been generalized for locating eigenvalues of matrices of a graph based on the parameter treewidth in \cite{Martin2020}. To be more precise, if an order $n$ graph $G$ is given, together with a tree decomposition of width $k$, then it is possible to locate the eigenvalues of $G$ in time $O(k^2n)$. We anticipate that applications of our technique using this algorithm to graphs with small treewidth may be possible.

We would like to finish the paper by suggesting a few problems where this technique may be a tool.
\begin{itemize}
  \item For a given $n$, characterize the trees with $n$ vertices having exactly $\lceil \frac{n}{2}\rceil$ Laplacian eigenvalues smaller than the average degree or, equivalently, $$\sigma(T) = \left\lfloor \frac{n}{2}\right\rfloor.$$
  \item Prove that $P_n$, the path with $n$ vertices, is the only tree minimizing the Laplacian energy among all trees with $n$ vertices.
  \item Find limit points for eigenvalues of graphs.
\end{itemize}

\section*{Acknowledgments}
This research took place while Vilmar Trevisan was visiting the
``Dipartimento di Matematica e Applicazioni'', University of Naples ``Federico II'', Italy.  The authors acknowledge the financial support provided by the hosting University, and by CAPES-Print 88887.467572/2019-00, Brazil. The second author also acknowledges partial support of CNPq grants 409746/2016-9 and 310827/2020-5, and FAPERGS PqG 17/2551-0001.

\end{document}